\newtheorem{theorem}{Theorem}[section]
\newtheorem{lemma}[theorem]{Lemma}
\newtheorem{cor}[theorem]{Corollary}
\theoremstyle{definition}
\newtheorem{de}[theorem]{Definition}
\theoremstyle{definition}
\newtheorem{remark}[theorem]{Remark}
\theoremstyle{proposition}
\numberwithin{equation}{section}
\def\eq#1{(\ref{#1})}
\def\N{\mathbb{N}}
\def\R{\mathbb{R}}
\begin{document}

\title{Classification of Radial Solutions for Semilinear Elliptic Systems with Nonlinear Gradient Terms}

\author{{\large Gurpreet Singh}\\School of Mathematical Sciences\\
 University College Dublin, Belfield\\
 Dublin 4, Ireland\\E-mail: {\tt gurpreet.singh@ucdconnect.ie}}

%\address{School of Mathematical Sciences, University College Dublin, Belfield, Dublin 4, Ireland}

%\email{gurpreet.singh@ucdconnect.ie}

%\subjclass[2010]{Primary 35B40; 35J75; Secondary 35J25; 35B51}

%\date{21 mai 2011}

%\keywords{Entire (large) solution; radial symmetry; asymptotic behavior, uniqueness}

\maketitle

\begin{abstract}
We are concerned with the classification of positive radial solutions for the system $\Delta u=v^p$, $\Delta v=f(|\nabla u|)$, where $p>0$ and $f\in C^1[0,\infty)$ is a nondecreasing function such that $f(t)>0$ for all $t>0$. We show that in the case where the system is posed in the whole space $\R^N$ such solutions exist if and only if  $\displaystyle\int_{1}^\infty \Big(\displaystyle \int_0^s F(t)dt  \Big)^{-p/(2p+1)} ds =\infty$. This is the counterpart of the Keller-Osserman condition for the case of single semilinear equation. Similar optimal conditions are derived in case where the system is posed in a ball of $\R^N$. If $f(t)=t^q$, $q>1$, using dynamical system techniques we are able to describe the behaviour of solutions at infinity (in case where the system is posed in the whole $\R^N$) or around the boundary (in case of a ball). 

\end{abstract}

%\tableofcontents\begin{proof}

\section{Introduction}

In 1950s Keller \cite{K1954} and Osserman \cite{O1957} obtained independently optimal 
conditions for the existence of a solution to the boundary blow-up problem
\begin{equation}\label{ko}
\left\{
\begin{aligned}
\Delta u&=f(u)&&\quad\mbox{ in }\Omega,\\
u&=\infty &&\quad\mbox{ on }\partial\Omega,
\end{aligned}
\right.
\end{equation}
where $\Omega\subset \R^N$ is a bounded domain and $f\in C^1[0,\infty)$ is a nonnegative increasing function. The condition on the boundary $\partial\Omega$ in \eq{ko} is understood as  $\lim_{x\to x_0} u(x)=\infty$ for all $x_0\in \partial\Omega$.
Keller and Osserman obtained that \eq{ko} has $C^2(\Omega)$ solutions if and only if
\begin{equation}\label{kocond}
\int_1^\infty\frac{ds}{\sqrt{F(s)}}<\infty\quad\mbox{ where }\; F(s)=\int_0^s f(t)dt.
\end{equation}
Interestingly, condition \eq{kocond} also appeared in other circumstances: it is related to the maximum principle for nonlinear elliptic inequalities. For instance, if $u\in C^2(\Omega)$ is nonnegative and satisfies $\Delta u\leq f(u)$ in $\Omega$, then, if $u$ vanishes at a point in $\Omega$, it must vanish everywhere in $\Omega$. We refer the reader to Vazquez \cite{V1984} and to Pucci, Serrin and Zou  \cite{PS1993,PS2004,PSZ1999} for various extensions of this result.

Problems related to boundary blow-up solutions have a long history and they can be traced back to at least a century ago when Bieberbach \cite{B1916} investigated such solutions for the equation $\Delta u=e^u$ in a planar domain. Since then, many techniques have been devised to deal with such solutions (see, e.g. \cite{GRbook2008,GRbook2012,Rbk} for an account on the progress on this topic). Boundary blow-up solutions for semilinear elliptic equations with nonlinear gradient terms have been only recently investigated (see for instance \cite{AGMQ2012,CPW2013,FQS2013,MMMR2011}).

In this paper we investigate a semilinear elliptic system featuring a mixture of power type nonlinearities and nonlinear gradient terms. More precisely, we shall be concerned with 
\begin{equation}\label{sys0}
\left\{
\begin{aligned}
\Delta u&=v^p&&\quad\mbox{ in }\Omega,\\
\Delta v&=f(|\nabla u|) &&\quad\mbox{ in }\Omega,
\end{aligned}
\right.
\end{equation}
where $\Omega\subset \R^N$ is either a ball centred at the origin or the whole space, $p>0$ is a real number and $f\in C^1[0,\infty)$ is a nondecreasing function such that $f(t)>0$ for all $t>0$. Our study will assume that $u$ and $v$ are positive radially symmetric solutions of \eq{sys0}. Note that we do not assume a priori any condition at the boundary for neither $u$ or $v$ but this will be needed in the course of our analysis as we shall be concerned with the classification of all solutions to \eq{sys0}.

If $\Omega$ is a ball, system \eq{sys0} was first considered by Diaz, Lazzo, and Schmidt in \cite{DLS2005}, in the case $p=1$ and $f(t)=t^2$. Such choice of exponent $p$ and function $f$ is related to the study of the dynamics of a viscous, heat-conducting fluid. The authors in \cite{DLS2005} obtained the existence of one positive solution and, in case of small dimensions, of one sign-changing solution that blows up at the boundary. Their study was further extended to time dependent systems in Diaz, Rakotoson, and Schmidt \cite{DRS2007, DRS2008}.

We shall first be concerned with the case where $\Omega$ is a ball. In such a situation we obtain that \eq{sys0} admits positive radially symmetric  solutions $(u,v)$ such that $u$ or $v$ (or both) blow up around $\partial \Omega$ if and only if
\begin{equation}\label{KOgradient}
\int_{1}^\infty\frac{ds}{\Big(\displaystyle \int_0^s F(t)dt  \Big)^{p/(2p+1)}} <\infty.
\end{equation}
This can be seen as the analogous condition to \eq{kocond} obtained by Keller \cite{HV1996} and Osserman \cite{MST1995} for \eq{ko}. We also provide a complete classification of radially symmetric solutions in such a case. Moreover, we shall obtain (see Theorem \ref{thm2} below) that the equation
$$
\Delta^2 u=f(|\nabla u|)\quad\mbox{ in }B_R
$$
has (not necessarily positive) radially symmetric solutions that blow up at the boundary $\partial B_R$ if and only if
$$
\int_1^\infty\frac{sds}{\displaystyle \Big(\int_0^{s}{F(t)}{dt}\Big)^{1/3}}=\infty
 \quad\mbox{and}\quad
\int_1^\infty\frac{ds}{\displaystyle \Big(\int_0^{s}{F(t)}{dt}\Big)^{1/3}}<\infty.
$$
If $f(t)=t^q$, $q\geq 1$, we are able to give the exact rate at which the components $u$ and $v$ blow up at the boundary. In such a setting we use dynamical systems tools for cooperative systems with negative divergence.

Further, condition \eq{KOgradient} appears again in the study of \eq{sys0} in the case $\Omega=\R^N$. Again when $f$ is a pure power type nonlinearity we shall be able to precisely describe the behaviour of solutions at infinity.

\section{Main results}

Let us first present the analysis of system \eq{sys0} in the case where $\Omega$ is a ball. Namely, we shall first investigate the system 
\begin{equation}\label{sys}
\left\{
\begin{aligned}
\Delta u&=v^p&&\quad\mbox{ in }B_R,\\
\Delta v&=f(|\nabla u|) &&\quad\mbox{ in }B_R,
\end{aligned}
\right.
\end{equation}
where $B_R\subset \R^N,$ $N\geq 2$ is the open ball of radius $R>0$ centred at the origin, $p>0$ and $f\in C^1[0,\infty)$ is a nondecreasing function such that $f(t)>0$ for all $t>0$. Let $F$ be the antiderivative of $f$ that vanishes at the origin (see \eq{kocond}).
Sometimes in this paper we shall complement the system \eq{sys} with one of the following boundary conditions:
\begin{itemize}
\item either
$
\mbox{ $u$ and $v$ are bounded in $B_R$;}
$
\item or
\begin{equation}\label{cond1}
u\mbox{ is bounded in $B_R$ and } \lim_{|x|\nearrow R}v(x)=\infty;
\end{equation}
\item or
\begin{equation}\label{cond2}
\lim_{|x|\nearrow R}u(x)=\lim_{|x|\nearrow R}v(x)=\infty.
\end{equation}
\end{itemize}

From the first equation of \eq{sys} it is easy to see that the situation  $\lim_{|x|\nearrow R}u(x)=\infty$ and $v$ is bounded in $B_R$ cannot occur.

\begin{theorem}\label{thm1}
We have:
\begin{enumerate}
\item[(i)] All positive radial solutions of \eq{sys} are bounded if and only if 
\begin{equation}\label{bounded}
\int_{1}^\infty\frac{ds}{\Big(\displaystyle \int_0^s F(t)dt  \Big)^{p/(2p+1)}} =\infty.
\end{equation}
\item[(ii)] There exists a positive radial solution $(u,v)$ of \eq{sys} satisfying \eq{cond1}  if and only if 
\begin{equation}\label{int1}
\int_{1}^\infty\frac{sds}{\Big(\displaystyle \int_0^s F(t)dt  \Big)^{p/(2p+1)}} <\infty.
\end{equation}
\item[(iii)] The exists a positive radial solution $(u,v)$ of \eq{sys} satisfying \eq{cond2} if and only if
\begin{equation}\label{int2}
\int_{1}^\infty\frac{ds}{\Big(\displaystyle \int_0^s F(t)dt  \Big)^{p/(2p+1)}} <\infty\quad\mbox{ and }
\int_{1}^\infty\frac{sds}{\Big(\displaystyle \int_0^s F(t)dt  \Big)^{p/(2p+1)}} =\infty.
\end{equation}
\end{enumerate}
\end{theorem}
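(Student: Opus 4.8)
The plan is to pass to the radial ODE system, record the basic monotonicity of the components, isolate a single sharp Keller--Osserman-type a priori bound, and then read off the three cases by a change of variables, the existence halves being supplied by a shooting argument. Throughout write $r=|x|$, let $'$ denote the radial derivative, and set $\Phi(s):=\int_0^s F(t)\,dt$, so the target function in \eq{bounded}--\eq{int2} is $\Phi(s)^{-p/(2p+1)}$. In radial form \eq{sys} reads $(r^{N-1}u')'=r^{N-1}v^p$ and $(r^{N-1}v')'=r^{N-1}f(u')$ on $(0,R)$ with $u'(0)=v'(0)=0$; since $v>0$ and $f>0$ on $(0,\infty)$ the right-hand sides are positive, so $r^{N-1}u'$ and $r^{N-1}v'$ increase and $u,v,u',v'>0$ on $(0,R)$. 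Evaluating $u'''$ (resp. $v'''$) at a first interior critical point of $u'$ (resp. $v'$), and using $f'\ge0$, rules such points out, so $u'$ and $v'$ are strictly increasing; hence each of $u,v,u'$ is either bounded on $[0,R)$ or tends to $+\infty$ as $r\nearrow R$. From the first equation, $v$ bounded forces $u'$, hence $u$, bounded, while $u'$ bounded forces $f(u')$ and thus $v',v$ bounded; so only three regimes occur: all bounded; $u$ bounded with $u',v\to\infty$; or $u,u',v\to\infty$. Note also that the system depends only on $b:=v(0)$, since $u$ enters only through $|\nabla u|=u'$.

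The heart of the matter is the estimate: for any positive radial solution with $u'(r)\to\infty$ as $r\nearrow R$ there are $c,C>0$ and $r_0\in(0,R)$ with
\[
c\,\Phi(u'(r))^{p/(2p+1)}\ \le\ v(r)^p\ \le\ C\,\Phi(u'(r))^{p/(2p+1)}\qquad (r_0\le r<R),
\]
equivalently $u''(r)\asymp\Phi(u'(r))^{p/(2p+1)}$ near $R$, the analogue of the classical bound $u'\asymp\sqrt{F(u)}$ shifted by one derivative. For the upper bound I would multiply the first equation by $f(u')$ and the second by $v^p$, use $u''\le v^p$, $v''\le f(u')$, the elementary inequality $v'\le\frac{R}{N}f(u')$, and $v^pv''=(v^pv')'-pv^{p-1}(v')^2$, integrate against the weight $r^{N-1}$ to reach inequalities of the type $F(u')\lesssim v^pv'+\mathrm{const}$, and then couple them; here the hypothesis $f\in C^1$ and the monotonicity of $u',v,v'$ are used to control $f$ along the trajectory. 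A convenient reformulation is to follow $m(r):=v(r)^{2p+1}/\Phi(u'(r))^p$: a direct computation shows $m'$ has the sign of $(2p+1)v'\,\Phi(u')-p\,v\,F(u')\big(v^p-\tfrac{N-1}{r}u'\big)$, and one must argue that this keeps $m$ between two positive constants as $r\nearrow R$.

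Granting the estimate, change variables from $r$ to $t=u'(r)$ on $[r_0,R)$: since $dr=dt/u''\asymp\Phi(t)^{-p/(2p+1)}\,dt$, integration gives $R-r_0\asymp\int_{u'(r_0)}^{\infty}\Phi(t)^{-p/(2p+1)}\,dt$ and $u(R^-)-u(r_0)=\int_{r_0}^{R}u'\,dr\asymp\int_{u'(r_0)}^{\infty}t\,\Phi(t)^{-p/(2p+1)}\,dt$. Hence a solution on $B_R$ with $u'\to\infty$ at $\partial B_R$ can exist only if $\int_1^{\infty}\Phi(s)^{-p/(2p+1)}\,ds<\infty$, and when it does, its first component $u$ is bounded precisely when $\int_1^{\infty}s\,\Phi(s)^{-p/(2p+1)}\,ds<\infty$; together with the trichotomy above this yields the ``only if'' halves of (ii) and (iii) and, by contraposition, the implication ``\eq{bounded} $\Rightarrow$ all solutions bounded'' in (i). For the existence halves I would run a shooting argument in the single parameter $b=v(0)$: the maximal radius $\rho(b)$ depends continuously on $b$; the lower half of the key estimate forces $\rho(b)\le r_0+C\int^{\infty}\Phi^{-p/(2p+1)}<\infty$ and $\rho(b)\to0$ as $b\to\infty$, while $\rho(b)$ is arbitrarily large for $b$ small; so some $b$ yields $\rho(b)=R$, i.e. a positive radial solution on $B_R$ blowing up at $\partial B_R$, and the change-of-variables formulas decide whether it satisfies \eq{cond1} or \eq{cond2} according to the convergence of $\int_1^{\infty}s\,\Phi(s)^{-p/(2p+1)}\,ds$.

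The main obstacle is the two-sided estimate $v^p\asymp\Phi(u')^{p/(2p+1)}$. Even its upper half is subtle: handling the three ODE relations one at a time only yields something like $v\lesssim f(u')\,(\log F(u'))^{O(1)}$, which overshoots the truth by a genuine power of $u'$ and would give the wrong integrability threshold, so extracting the exponent $p/(2p+1)$ — and obtaining the matching lower bound, which is exactly what separates \eq{cond1} from \eq{cond2} — requires using the equations in tandem rather than in isolation. A secondary point worth keeping in mind is that when $f(0)>0$ and $\int_1^{\infty}\Phi^{-p/(2p+1)}<\infty$, the ball $B_R$ may admit no positive radial solution at all once $R$ exceeds a critical radius, so the statements (i)--(iii) are to be read relative to the range of $R$ for which solutions exist.
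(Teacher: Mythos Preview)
Your architecture is exactly the paper's: pass to the radial ODE, establish the two--sided bound $u''\asymp v^p\asymp\Phi(u')^{p/(2p+1)}$ near the blow--up radius, and then change variables $t=u'(r)$ so that $R-r_0\asymp\int^{\infty}\Phi^{-p/(2p+1)}$ and $u(R^-)-u(r_0)\asymp\int^{\infty} s\,\Phi(s)^{-p/(2p+1)}ds$. The trichotomy and the change--of--variables step are correct and match the paper almost verbatim (the paper packages the latter via $\Gamma(t)=\int_t^\infty\Phi^{-p/(2p+1)}$ and its inverse).

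The genuine gap is the two--sided estimate itself, which you flag as the main obstacle but do not prove. Your proposed manipulations are also mislabelled: multiplying the equations as you describe and integrating twice gives $\Phi(u')\lesssim v^{2p+1}$, which is the \emph{lower} bound on $v^p$, not the upper. This is indeed the easier half, and the paper does it essentially that way: from $f(w)\le Nv''$ and $w'\le v^p$ (these follow from $w\le\tfrac{r}{N}v^p$, $v'\le\tfrac{r}{N}f(w)$) one gets $F(w)'=f(w)w'\le Nv''v^p\le N(v'v^p)'$, integrates to $F(w)\lesssim v'v^p$, multiplies once more by $w'\le v^p$ and integrates to $\Phi(w)\lesssim v^{2p+1}$.

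The hard half is $v^{2p+1}\lesssim\Phi(u')$, and here the paper has a trick you are missing. From $v''\le f(w)$ multiply by $v'$ and use that $t\mapsto f(w(t))$ is increasing to get $(v')^2\le 2f(w)\,v$, i.e.\ $v'v^{-1/2}\le C\sqrt{f(w)}$. Now multiply by $w'$ and use the \emph{lower} bound $w'\ge v^p/N$ on the left: this yields $(v^{p+1/2})'\lesssim w'\sqrt{f(w)}$, which integrates to $v^{p+1/2}\lesssim\int_0^{w}\sqrt{f(t)}\,dt$. The bound is thus in terms of $\int_0^s\sqrt f$, not $\Phi(s)^{1/2}$, and a separate elementary lemma (the paper's Lemma~4.1) shows $\bigl(\int_0^s\sqrt f\bigr)^2$ and $\Phi(s)$ are comparable, closing the loop. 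Your ratio $m(r)=v^{2p+1}/\Phi(u')^p$ does not seem to help without an input of this kind: differentiating $m$ brings back exactly the mixed term $v'F(u')$ versus $v\,f(u')u''$ that the $\sqrt f$ detour is designed to decouple.

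On existence, the paper does not shoot in $b=v(0)$; it shows that for \emph{any} initial data the maximal radius is finite (a direct consequence of the lower half of the key estimate) and then rescales to hit the prescribed $R$. Your shooting alternative is plausible, but the step ``$\rho(b)$ is arbitrarily large for $b$ small'' is unproven and, as you yourself note, is in tension with the case $f(0)>0$.
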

By taking $f(t)=e^t$ and estimating the integrals in \eq{bounded}-\eq{int2} we find:
\begin{cor}\label{corblowup2}
Consider
\begin{equation}\label{eqexp}
\left\{
\begin{aligned}
\Delta u&=v^{p}&&\quad\mbox{ in }B_R,\\
\Delta v&= e^{|\nabla u|}&&\quad\mbox{ in }B_R.
\end{aligned}
\right.
\end{equation}
Then any solution of \eq{eqexp} is either bounded or satisfies \eq{cond1}.
\end{cor}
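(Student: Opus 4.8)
The plan is to reduce the statement to Theorem~\ref{thm1} by computing the Keller--Osserman type integrals for $f(t)=e^t$ and checking their (in)finiteness. First I would record that with $f(t)=e^t$ one has $F(s)=e^s-1$, hence
\[
G(s):=\int_0^s F(t)\,dt=e^s-1-s .
\]
A crude lower bound is enough: there is $s_0>0$ such that $G(s)\ge \tfrac12 e^{s}$ for all $s\ge s_0$, so that $G(s)^{p/(2p+1)}\ge c\,e^{\alpha s}$ with $\alpha:=p/(2p+1)>0$ (recall $p>0$) and some $c>0$. Consequently both
\[
\int_1^\infty \frac{ds}{G(s)^{p/(2p+1)}}\qquad\text{and}\qquad \int_1^\infty \frac{s\,ds}{G(s)^{p/(2p+1)}}
\]
are finite, since $\int_{s_0}^\infty s\,e^{-\alpha s}\,ds<\infty$.

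Next I would feed these facts into Theorem~\ref{thm1}. Finiteness of the first integral means that condition \eqref{bounded} fails, so by part (i) not every positive radial solution of \eqref{eqexp} is bounded. Finiteness of the second integral is exactly condition \eqref{int1}, so by part (ii) solutions of type \eqref{cond1} do occur. Finally, the second requirement in \eqref{int2}, namely $\int_1^\infty s\,G(s)^{-p/(2p+1)}\,ds=\infty$, is violated; hence by part (iii) there is \emph{no} positive radial solution of \eqref{eqexp} satisfying \eqref{cond2}.

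To conclude, I would invoke the trichotomy recorded just before Theorem~\ref{thm1}: a positive radial solution $(u,v)$ of \eqref{sys} is either bounded, or satisfies \eqref{cond1}, or satisfies \eqref{cond2}, the remaining logical possibility ($u$ unbounded while $v$ is bounded) being ruled out there directly from the first equation. Since the last alternative has just been excluded for \eqref{eqexp}, every positive radial solution of \eqref{eqexp} is either bounded or satisfies \eqref{cond1}, which is the assertion of the corollary.

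There is essentially no obstacle here beyond the elementary asymptotics of $G$; the only point needing a little care is the logical step that part (iii) of Theorem~\ref{thm1}, being an equivalence, yields \emph{non-existence} (not merely non-construction) of \eqref{cond2}-solutions, and that the classification into the three boundary behaviours is exhaustive — both of which are in place by the time the corollary is stated.
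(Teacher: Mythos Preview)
Your proposal is correct and follows the same approach as the paper, which simply states that the corollary follows by ``taking $f(t)=e^t$ and estimating the integrals in \eqref{bounded}--\eqref{int2}''. Your explicit computation of $G(s)=e^s-1-s$ and the ensuing convergence of both integrals, together with the exhaustive trichotomy and Theorem~\ref{thm1}(iii) to exclude \eqref{cond2}, is exactly the intended argument (the remarks on parts (i) and (ii) are correct but not needed for the corollary as stated).
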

We now let $f(t)=t^q$, $q\geq 1$. From Theorem \ref{thm1} we obtain:
\begin{cor}\label{corblowup3}
Consider
\begin{equation}\label{eqtq}
\left\{
\begin{aligned}
\Delta u&=v^{p}&&\quad\mbox{ in }B_R,\\
\Delta v&=|\nabla u|^{q}&&\quad\mbox{ in }B_R,
\end{aligned}
\right.
\end{equation}
where $p>0$ and $q\geq 1$. Then we have:
\begin{enumerate}
\item [(i)] All positive radial solutions of \eq{eqtq} are bounded if and only if $$p\leq 1 \mbox{ and } 1\leq q\leq \frac{1}{p}.$$
\item [(ii)] There exists positive radial solutions of \eq{eqtq} satisfying \eq{cond1} if and only if $$q>2\Big(1+\frac{1}{p}\Big).$$
\item [(iii)] There exists positive radial solutions of \eq{eqtq} satisfying \eq{cond2} if and only if $$\frac{1}{p}< q\leq 2 \Big(1+\frac{1}{p}\Big).$$
\end{enumerate}
\end{cor}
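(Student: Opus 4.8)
The plan is to derive Corollary~\ref{corblowup3} from Theorem~\ref{thm1} by plugging $f(t)=t^q$ into the three integral conditions \eq{bounded}, \eq{int1}, \eq{int2} and carrying out the elementary asymptotic analysis of the resulting power-type integrals. First I would compute $F(s)=\int_0^s t^q\,dt=\frac{s^{q+1}}{q+1}$, and then
$$
\int_0^s F(t)\,dt=\frac{1}{q+1}\cdot\frac{s^{q+2}}{q+2}=\frac{s^{q+2}}{(q+1)(q+2)},
$$
so that up to a positive multiplicative constant
$$
\Big(\int_0^s F(t)\,dt\Big)^{p/(2p+1)}=c\, s^{\frac{p(q+2)}{2p+1}},\qquad c=c(p,q)>0.
$$
Hence the integrand in \eq{bounded} behaves like $s^{-\alpha}$ with $\alpha=\frac{p(q+2)}{2p+1}$, while the integrand in \eq{int1} behaves like $s^{1-\alpha}$.

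Next I would apply the standard fact that $\int_1^\infty s^{-\beta}\,ds$ converges iff $\beta>1$. For part (i), \eq{bounded} holds (all solutions bounded) iff $\alpha\le 1$, i.e. $p(q+2)\le 2p+1$, which rearranges to $pq\le 1-p$, and since $q\ge 1$ this already forces $p<1$; dividing by $p>0$ gives $q\le \frac{1-p}{p}=\frac1p-1<\frac1p$, so combined with the standing hypothesis $q\ge 1$ the condition becomes exactly $p\le 1$ and $1\le q\le \frac1p$ (one checks the endpoint $\alpha=1$, i.e.\ $q=\frac1p-1$, still gives a divergent integral). For part (ii), \eq{int1} holds iff $1-\alpha<-1$, i.e.\ $\alpha>2$, i.e.\ $p(q+2)>2(2p+1)$, which rearranges to $pq>2p+2$, i.e.\ $q>2+\frac2p=2\big(1+\frac1p\big)$. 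For part (iii), \eq{int2} requires simultaneously that the first (unweighted) integral converge, i.e.\ $\alpha>1$, i.e.\ $p(q+2)>2p+1$, i.e.\ $q>\frac1p$, and that the second (weighted) integral diverge, i.e.\ $\alpha\le 2$, i.e.\ $q\le 2\big(1+\frac1p\big)$; together these give $\frac1p<q\le 2\big(1+\frac1p\big)$.

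There is essentially no serious obstacle here — the corollary is a direct computational specialization of Theorem~\ref{thm1}. The only points demanding a little care are the behaviour at the endpoint exponents (when $\alpha=1$ or $\alpha=2$ the relevant integral is $\int_1^\infty \frac{ds}{s}$, which diverges, so the endpoints fall on the ``bounded'' and ``\eq{cond2}'' sides respectively, which is why the inequalities in (i) and (iii) are non-strict while those in (ii) are strict), and checking that the three cases in Corollary~\ref{corblowup3} are mutually exclusive and, together with a fourth (non-physical) regime already excluded by the remark preceding Theorem~\ref{thm1}, exhaust all $(p,q)$ with $p>0$, $q\ge 1$ — this is immediate once the dichotomy $\alpha\le 1$ vs.\ $1<\alpha\le 2$ vs.\ $\alpha>2$ is laid out. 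I would present the computation of $\alpha$ once and then simply read off (i), (ii), (iii).
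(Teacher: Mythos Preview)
Your approach is exactly what the paper intends: the corollary is stated immediately after Theorem~\ref{thm1} with the words ``From Theorem~\ref{thm1} we obtain'', and no further proof is given, so your computation of $\alpha=\frac{p(q+2)}{2p+1}$ and the ensuing trichotomy $\alpha\le1$, $1<\alpha\le2$, $\alpha>2$ is precisely the intended derivation.

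There is, however, an arithmetic slip in your part~(i). From $p(q+2)\le 2p+1$ you get $pq+2p\le 2p+1$, hence $pq\le 1$, not $pq\le 1-p$. Consequently the bound is $q\le\frac1p$ (and the endpoint $\alpha=1$ corresponds to $q=\frac1p$, not $q=\frac1p-1$). Your final stated condition ``$p\le1$ and $1\le q\le\frac1p$'' is correct, but it does not follow from the intermediate inequality you wrote; with your erroneous $q\le\frac1p-1$ you would have obtained a strictly smaller region. You handled the same rearrangement correctly in part~(iii), so this is just a local typo to fix.
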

The three regions $A$, $B$ and $C$ in the $pq$-plane that correspond to the cases (i), (ii) and (iii) in Corollary \ref{corblowup3} are depicted below.

\begin{figure}[!htb]
\centering
\includegraphics[scale=.45]{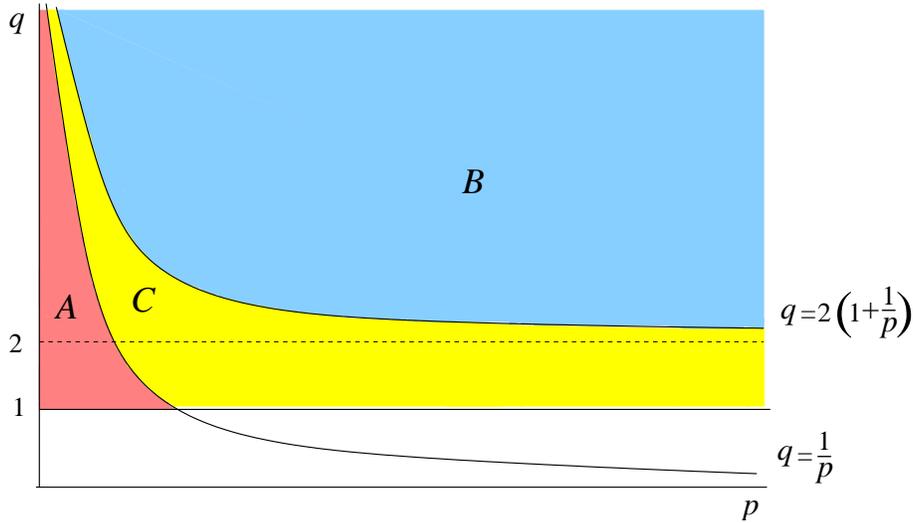}
\caption{The three regions described by Corollary \ref{corblowup3} }
%\label{fig:digraph}
\end{figure}

In the two pictures below we used MATLAB to plot the solution $(u,v)$ of system \eq{eqtq} for $q=3$ and $p=2$, $p=4$ and for various space dimensions $N$.

Our next result deals with the biharmonic problem that derives from \eq{sys} by taking $p=1$. In this case we are able to deduce optimal conditions for the existence of a boundary blow up solution. 

\begin{theorem}\label{thm2}
Let $R>0$. The problem 
\begin{equation}\label{sysb}
\left\{
\begin{aligned}
\Delta^2 u&=f(|\nabla u|)&&\quad\mbox{ in }B_R,\\
u&=\infty&&\quad\mbox{ on }\partial B_R,
\end{aligned}
\right.
\end{equation}
has radially symmetric solutions if and only if 
\begin{equation}\label{intc}
\int_1^\infty\frac{sds}{\displaystyle \Big(\int_0^{s}{F(t)}{dt}\Big)^{1/3}}=\infty
 \quad\mbox{and}\quad
\int_1^\infty\frac{ds}{\displaystyle \Big(\int_0^{s}{F(t)}{dt}\Big)^{1/3}}<\infty
\end{equation}
\end{theorem}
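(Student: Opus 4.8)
The plan is to reduce Theorem \ref{thm2} to Theorem \ref{thm1} with $p=1$ by a careful bookkeeping of signs, since a radial solution of \eq{sysb} is exactly a solution of the system \eq{sys} with $p=1$ in which we drop the positivity requirement on $v$. Writing $v=\Delta u$, the biharmonic equation becomes $\Delta v=f(|\nabla u|)$, so $(u,v)$ solves \eq{sys} with $p=1$; conversely any radial pair $(u,v)$ solving that system yields $\Delta^2 u=\Delta v=f(|\nabla u|)$. The only discrepancy with the hypotheses of Theorem \ref{thm1} is that there $v>0$, which forces $v^p$ to make sense; for $p=1$ this restriction is immaterial to the ODE analysis, so I expect the a priori estimates behind Theorem \ref{thm1}(i)--(iii) to carry over verbatim once one keeps track of the fact that $v$ may now change sign or be negative.

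First I would set up the radial ODE system: with $'=d/dr$ and writing $m(r)=N-1$ so the radial Laplacian is $(\cdot)''+\frac{N-1}{r}(\cdot)'$, the problem \eq{sysb} reads $r^{1-N}(r^{N-1}u')' = v$, $r^{1-N}(r^{N-1}v')' = f(|u'|)$, with $u'(0)=v'(0)=0$ and the blow-up condition $u(r)\to\infty$ as $r\nearrow R$. Integrating the second equation twice from $0$ gives $v$ in terms of $f(|u'|)$; since $f\ge 0$, the map $r\mapsto r^{N-1}v'(r)$ is nondecreasing, hence $v'\ge 0$ and $v$ is nondecreasing. Therefore $v$ is eventually of one sign near $r=R$: either $v$ stays negative on $[0,R)$, or $v$ becomes positive on some $(r_0,R)$. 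In the first case $u'$ is eventually decreasing and $u$ cannot blow up to $+\infty$ unless... — here I would argue that a genuine boundary blow-up forces $v$ to be positive near $R$ (otherwise $r^{N-1}u'$ is bounded and $u$ stays bounded), so effectively we are in the regime of Theorem \ref{thm1} on the annulus $(r_0,R)$ with $v>0$ there, and the behaviour of $u$ near $R$ is governed precisely by whether $u$ itself blows up.

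The key quantitative step is to run the Keller--Osserman-type estimate already developed for Theorem \ref{thm1} with $p=1$. Set $w=|u'|$ (positive near $R$, where $u'>0$ by the above). One multiplies, combines the two equations, and derives a differential inequality for $w$ of the form $w' \gtrsim \big(\int_0^{w} F\big)^{1/3}$ up to lower-order $r$-dependent factors, whose integrability at $w=\infty$ is governed by $\int_1^\infty (\int_0^s F)^{-1/3}\,ds$. Finiteness of this integral is what allows $w=|u'|\to\infty$ in finite "time" $r\nearrow R$, i.e. it is exactly the condition for $v=\Delta u$ (equivalently $u'$) to blow up at the boundary — this gives the second condition in \eq{intc}. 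The first condition, $\int_1^\infty s(\int_0^s F)^{-1/3}\,ds=\infty$, is precisely \eq{bounded} with the extra factor $s$, which by Theorem \ref{thm1}(i)--(ii) is the dividing line between "$u$ itself stays bounded" and "$u$ also blows up": since \eq{sysb} only demands $u=\infty$ on $\partial B_R$ and imposes nothing on $v$, we need the parameters to land in the region where $u\to\infty$ is forced but is still compatible with a solution existing on all of $[0,R)$ — that is, the region $C$ of Corollary \ref{corblowup3} translated to general $f$, i.e. \eq{cond2}-type behaviour excluded on the "$u$ bounded" side and the \eq{cond1} side excluded because there $u$ stays bounded. Matching the two one-sided bounds, the existence of a radial solution of \eq{sysb} holds iff the first integral diverges (so a solution exists up to $r=R$, with $u$ nonetheless unbounded) and the second converges (so that $u$ — via $v=\Delta u$ growing — actually reaches $+\infty$); this is exactly \eq{intc}.

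The main obstacle I anticipate is the sign analysis of $v$: unlike in Theorem \ref{thm1}, $v=\Delta u$ is not assumed positive, and near the centre $v(0)$ could be negative, zero, or positive, so one must show that a boundary blow-up of $u$ nonetheless forces $v>0$ on a left-neighbourhood of $R$ and rule out pathological oscillation or early extinction. Monotonicity of $r^{N-1}v'$ (from $f\ge 0$) is the tool here: it pins down $v$ as eventually monotone and of one sign, after which the argument is a two-sided comparison essentially identical to the proof of Theorem \ref{thm1} with $p=1$. A secondary technical point is handling the kink of $t\mapsto|t|$ at $u'=0$; but since $u'>0$ on the relevant annulus once $v>0$ there, $f(|\nabla u|)=f(u')$ is $C^1$ in that region and no difficulty arises. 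I would also need the elementary asymptotic equivalences (for $f(t)=t^q$, Corollary \ref{corblowup3}-style) only as a sanity check, not in the proof itself.
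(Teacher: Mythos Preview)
Your proposal is correct and follows essentially the same approach as the paper: set $v=\Delta u$, use $(r^{N-1}v')'=r^{N-1}f(|u'|)\ge 0$ to see that $v$ is nondecreasing and that $u\to\infty$ forces $v\to\infty$ (hence $v>0$, and then $w=u'>0$ with $w\to\infty$, on some annulus $[R_1,R)$), then rederive the two-sided bounds of Step~1 of Theorem~\ref{thm1} on that annulus and invoke Step~3 with $p=1$ to obtain \eq{intc}, which is exactly \eq{int2} for $p=1$. The one point to sharpen is that the estimates do not transfer \emph{verbatim}: integrating from $R_0>0$ rather than $0$ leaves a boundary term that is absorbed using $v\to\infty$, so the constants become $1/(2N)$ in place of $1/N$---but this is precisely the obstacle you anticipated.
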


\begin{figure}[!htb]\label{fig2}
\centering
\includegraphics[height=7.7cm,width=17cm]{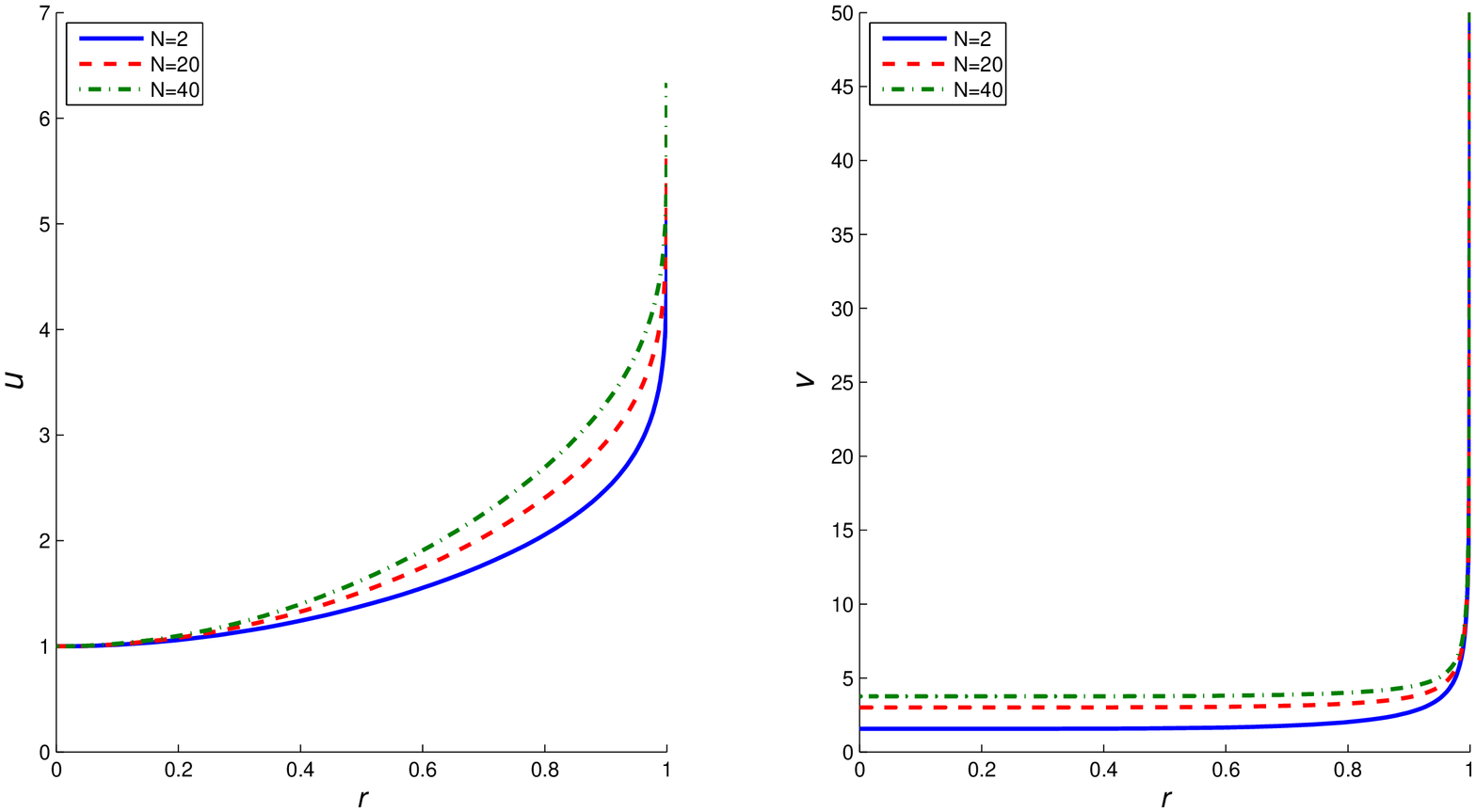}
\caption{The solution $u$ (left) and $v$ (right) for system \eq{eqtq} with boundary condition \eq{cond1} in the case $p=4$, $q=3$ and for various space dimensions $N=2$, $N=20$ and $N=40$. The graph of the $v$-component is restricted on the vertical axis to the interval $[0,50]$. }
\end{figure}
\begin{figure}[!htb]\label{fig3}
\centering
\includegraphics[height=7.7cm,width=17cm]{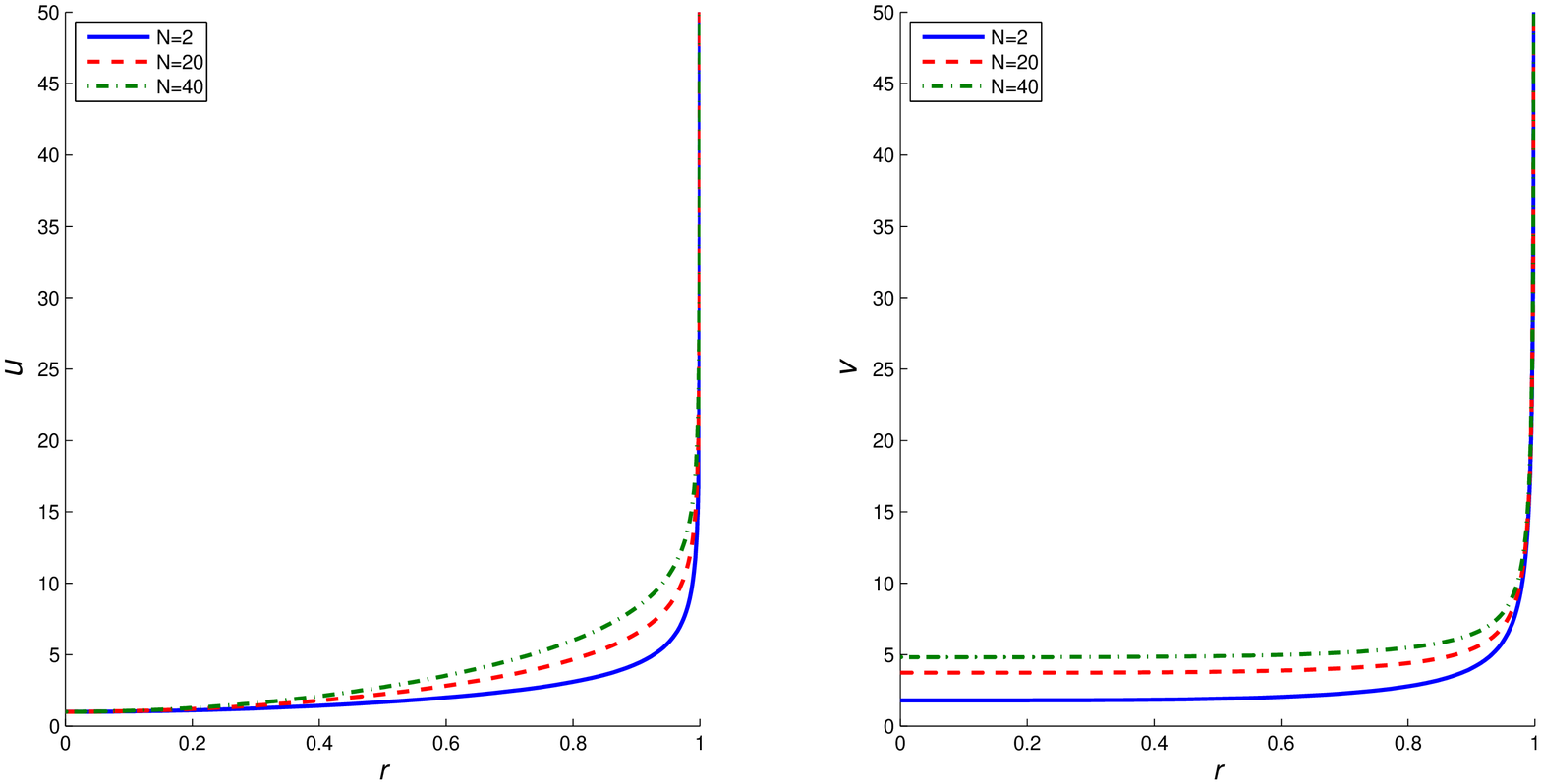}
\caption{The solution $u$ (left) and $v$ (right) for system \eq{eqtq} with boundary condition \eq{cond2} in the case $p=2$, $q=3$ and for various space dimensions $N=2$, $N=20$ and $N=40$ by restricting the vertical axis to the interval $[0,50]$. }
\end{figure}

\begin{remark} (i) In Theorem \ref{thm2} we do not require neither $u$ nor $\Delta u$ to be positive in $B_R$. 

(ii) Classification of radially symmetric solutions for $\Delta ^m u=u^p$ may be found in \cite{DLS2014,LS2009a,LS2009b,LS2011}. 
\end{remark}

We shall next be interested on the behaviour at the boundary of solutions to \eq{eqtq} that satisfy either \eq{cond1} or  \eq{cond2}. Consider
\begin{equation}\label{eqtq1}
\left\{
\begin{aligned}
\Delta u&=v^{p},\, u>0&&\quad\mbox{ in }B_R,\\
\Delta v&=|\nabla u|^{q},\, v>0&&\quad\mbox{ in }B_R,\\
v&=\infty,&&\quad\mbox{ on }\partial B_R.\\
\end{aligned}
\right.
\end{equation}
Note first that according to Corollary \ref{corblowup3} the system \eq{eqtq1} has radially symmetric  solutions if and only if $q>1/p$.

Our main result regarding the behaviour of the radially symmetric solutions to \eq{eqtq1} is as follows.
\begin{theorem}\label{thm3}
Assume $p,q\geq 1$, $(p,q)\neq (1,1)$ and let $(u,v)$ be a positive radially symmetric solution to  \eq{eqtq1}. Then
\begin{equation}\label{blowv}
\lim_{|x|\nearrow R}(R-|x|)^{\frac{q+2}{pq-1}}v(x)=\Big[\frac{(1+2p)^{q}(q+2)(q+pq+1)}{(pq-1)^{2+q}}\Big]^{\frac{1}{pq-1}}.
\end{equation}
Also,
\begin{enumerate}
\item[(i)] If $q>2(1+1/p)$, then there exists $L:=\lim_{|x|\nearrow R}u(x)\in (0,\infty)$ and
\begin{equation}\label{blowu0}
\lim_{|x|\nearrow R}\frac{L-u(x)}{(R-|x|)^{\frac{pq-2(1+p)}{pq-1}}}=\Big[\frac{(1+2p)(q+2)^{p}(q+pq+1)^{p}}{(pq-1)^{2p+1}}\Big]^{\frac{1}{pq-1}}.
\end{equation}
\item[(ii)]If $q= 2(1+1/p)$ then
\begin{equation}\label{blowu2}
\lim_{|x|\nearrow R} \frac{u(x)}{\ln\frac{1}{R-|x|}}= \Big[\frac{(1+2p)(q+2)^{p}(q+pq+1)^{p}}{(pq-1)^{2p+1}}\Big]^{\frac{1}{pq-1}}.
\end{equation}
\item[(iii)]  If $q< 2(1+1/p)$ then 
\begin{equation}\label{blowu1}
\lim_{|x|\nearrow R}(R-|x|)^{\frac{2+2p-pq}{pq-1}}u(x)= \frac{pq-1}{2+2p-pq}\Big[\frac{(1+2p)(q+2)^{p}(q+pq+1)^{p}}{(pq-1)^{2p+1}}\Big]^{\frac{1}{pq-1}}.
\end{equation}
\end{enumerate}
\end{theorem}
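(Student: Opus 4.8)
The plan is to reduce the PDE system to a pair of ODEs in the radial variable and then to a first-order autonomous system to which the dynamical-systems description can be applied. Writing $r=|x|$ and using $\Delta = \partial_{rr} + \frac{N-1}{r}\partial_r$, set $w=u'$ (the radial derivative); then the system \eq{eqtq1} becomes $(r^{N-1}v')' = r^{N-1}|w|^{q}$ and $(r^{N-1}w)' = r^{N-1}v^{p}$, with $w>0$ on $(0,R)$ because $u$ is increasing (from $\Delta u = v^p>0$ and $u'(0)=0$). Since all the prescribed limits are as $|x|\nearrow R$, the first step is to show that the leading-order boundary behaviour is governed by the one-dimensional (plane) problem: the factor $r^{N-1}$ is bounded between positive constants near $r=R$, so after the change of variable $t=R-r$ the system is a regular perturbation of $v'' = w^{q}$, $w' = v^{p}$ on a half-neighbourhood of $t=0^+$ with $v\to\infty$. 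I would make this rigorous by deriving matching upper and lower barriers, exactly as in the proof of Theorem \ref{thm1}, so that the $(N-1)/r$ terms contribute only to lower-order corrections.

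The second step is to guess the self-similar profile. For the plane system $v'' = w^{q}$, $w' = v^{p}$ one looks for $v(t)\sim a\,t^{-\alpha}$, $w(t)\sim b\,t^{-\beta}$ as $t\to 0^+$. Balancing exponents in $w'=v^p$ gives $\beta+1 = \alpha p$, and in $v''=w^q$ gives $\alpha+2 = \beta q$; solving, $\alpha = \frac{q+2}{pq-1}$ and $\beta = \frac{p(q+2)}{pq-1} - 1 = \frac{q+pq+1 - (pq-1)}{\,pq-1\,}\cdot\frac1{?}$ — in any case $\beta = \alpha p - 1 = \frac{p(q+2)-(pq-1)}{pq-1} = \frac{2p+1}{pq-1}$. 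Matching the constants: from $w'=v^p$ one gets $-\beta b = a^p$ up to sign bookkeeping (since $w$ is positive and decreasing towards the interior, the signs work out with $t=R-r$), giving one relation between $a$ and $b$; from $v''=w^q$ one gets $\alpha(\alpha+1)a = b^{q}$, a second relation. Eliminating $b$ yields precisely $a = \big[\frac{(1+2p)^{q}(q+2)(q+pq+1)}{(pq-1)^{2+q}}\big]^{1/(pq-1)}$, which is the constant in \eq{blowv}; the companion constant for $w$ (equivalently for $u'$) gives the bracket appearing in \eq{blowu0}–\eq{blowu1} after the extra integration described below. This identifies the candidate limits.

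The third step is to upgrade the formal profile to an actual asymptotic equivalence. Here I would introduce the standard logarithmic/Emden–Fowler-type change of variables, e.g. $s = -\ln(R-r)$ and the rescaled unknowns $V = (R-r)^{\alpha}v$, $W=(R-r)^{\beta}w$, which turns the system into an autonomous first-order system in $(V,W)$ whose equilibria are $(0,0)$ and the point $(a,b)$ found above. Because the original system is cooperative (both right-hand sides are increasing in the relevant variables) and, in the radial formulation, has negative divergence — this is exactly the structure advertised in the introduction — one can apply monotone dynamical-systems theory: trajectories corresponding to genuine blow-up solutions are squeezed between ordered sub/super-solutions and must converge to the unique nontrivial equilibrium $(a,b)$. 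This yields $\lim_{r\nearrow R}(R-r)^{\alpha}v(r)=a$, i.e. \eq{blowv}, and $\lim_{r\nearrow R}(R-r)^{\beta}u'(r)=b'$ for the appropriate constant $b'$.

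Finally, the statements (i)–(iii) for $u$ itself follow by integrating the asymptotics of $u' = w \sim b'(R-r)^{-\beta}$ from a fixed interior point to $r$, where $\beta = \frac{2p+1}{pq-1}$: the integral $\int (R-r)^{-\beta}dr$ converges near $r=R$ iff $\beta<1$, i.e. iff $2p+1 < pq-1$, i.e. $q>2(1+1/p)$, giving case (i) with $L-u(r)\sim \frac{b'}{1-\beta}(R-r)^{1-\beta}$ and $1-\beta = \frac{pq-2(1+p)}{pq-1}$; the borderline $\beta=1$, i.e. $q=2(1+1/p)$, produces the logarithm in (ii); and $\beta>1$, i.e. $q<2(1+1/p)$, gives $u(r)\sim \frac{b'}{\beta-1}(R-r)^{1-\beta}$, which is case (iii) with $\beta-1 = \frac{2+2p-pq}{pq-1}$. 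Matching $b'$ to $a$ via the relation $-\beta b' = a^{p}$ (so $b' = a^p/\beta = a^p(pq-1)/(2p+1)$) and substituting the expression for $a$ reproduces exactly the constants displayed in \eq{blowu0}, \eq{blowu2}, \eq{blowu1}.

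The main obstacle I expect is the third step: controlling the effect of the lower-order $\frac{N-1}{r}$ term and proving that the rescaled trajectory actually converges to the nontrivial equilibrium rather than merely accumulating on the set of equilibria. Making the barrier construction precise enough to pin down the constant (not just the exponent) — and verifying that the cooperative/negative-divergence structure genuinely forces convergence for the specific class of solutions satisfying \eq{cond1} or \eq{cond2} — is where the real work lies; the exponent arithmetic and the final integration in the fourth step are routine once that convergence is in hand.
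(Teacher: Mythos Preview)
Your approach is essentially the paper's: rescale by the self-similar exponents, pass to the logarithmic variable $s=-\ln(R-r)$, and use cooperative/negative-divergence dynamics to force convergence to the nontrivial equilibrium, then integrate $u'$ to split into cases (i)--(iii). The exponent/constant bookkeeping and the final integration are exactly right.

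There are two concrete points where your outline falls short of what the paper actually needs. First, the rescaled system is \emph{three}-dimensional, not two: since $v$ satisfies a second-order equation you must carry $\psi=v'$ as a separate variable, so the correct state is $(W,V,\Psi)$ (the paper's $(X,Y,Z)$), and the limiting autonomous system has equilibria $\mathbf{0}$ and $\mathbf{1}$ in $\R^3$. The Hirsch theorems you want (no cycles under negative divergence, limit sets reduce to equilibria) are applied in this 3D setting, and the non-autonomous perturbation coming from $(N-1)/r$ is handled not by barriers on $(u,v)$ but by the asymptotically-autonomous machinery of Mischaikow--Smith--Thieme (Theorem~\ref{thmfd2}): the $\omega$-limit set of the non-autonomous semiflow is compact, connected, invariant, and chain recurrent for the limiting flow, which together with Step~2 forces it to be a single equilibrium.

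Second, and this is the point you flag as the main obstacle, the paper does \emph{not} squeeze between explicit sub/super-solutions of the PDE. Both the boundedness of the rescaled trajectory and the exclusion of the equilibrium $\mathbf{0}$ rest on one comparison lemma at the ODE level (Lemmas~\ref{lemma1.2}--\ref{lemma1.3}): if $v(0)=m$ is increased to $\tilde m>m$, the blow-up radius strictly decreases, $\tilde R<R$. Hence the rescaled trajectory with data $\tilde m$ blows up at the \emph{finite} time $\tilde T=\ln\frac{1}{1-\tilde R}$; comparing with a time-shift of the original trajectory shows the latter cannot be unbounded (Step~1), and a continuity-in-initial-data argument shows it cannot converge to $\mathbf{0}$ either (Step~3). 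Without this monotone-dependence lemma your Step~3 has no mechanism to rule out accumulation at $\mathbf{0}$, and the barrier idea you sketch does not supply one.
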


Now we shall be interested in the system \eq{sys0} posed in the whole $\R^N$, namely 
\begin{equation}\label{sys1}
\left\{
\begin{aligned}
\Delta u&=v^p&&\quad\mbox{ in } \R^N,\\
\Delta v&=f(|\nabla u|) &&\quad\mbox{ in } \R^N.
\end{aligned}
\right.
\end{equation}

Our main result in this case is as follows.
\begin{theorem}\label{thmrn}
We have:
\begin{enumerate}
\item[(i)] The system \eq{sys1} has positive radially symmetric solutions if and only if
$$
\int_{1}^\infty\frac{ds}{\Big(\displaystyle \int_0^s F(t)dt  \Big)^{p/(2p+1)}} =\infty.
$$
\item[(ii)] Assume $f(t)=t^{q}$, where $q\geq 1>p$ and $pq<1$. Let $(u,v)$ be a positive radially symmetric solution. If 
\begin{equation}\label{div}
\frac{p(q^2-4)}{1-pq}\leq 2(N-1)
\end{equation}
then
\begin{equation*}
\lim_{|x|\rightarrow \infty}\frac{u(x)}{|x|^{\frac{2+2p-pq}{1-pq}}}=\Big[\frac{(2+q)(N(1-pq)+p(2+q))^{1/p}(N(1-pq)+q(2p+1))}{(1-pq)^{\frac{2p+2-pq}{p}}(2+2p-pq)^{\frac{pq-1}{p}}}\Big]^{\frac{p}{pq-1}}
\end{equation*}
and
\begin{equation*}
\lim_{|x|\rightarrow \infty}\frac{v(x)}{|x|^{\frac{q+2}{1-pq}}}=\Big[\frac{(2+q)(N(1-pq)+p(2+q))^{q}(N(1-pq)+q(2p+1))}{(1-pq)^{2+q}}\Big]^{\frac{1}{pq-1}}.
\end{equation*}

\end{enumerate}

\end{theorem}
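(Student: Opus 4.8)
The plan is to reduce the PDE system to an ODE system in the radial variable and then proceed in two stages, mirroring the structure of Theorem \ref{thm1}. Writing $r=|x|$, a positive radial solution of \eq{sys1} satisfies
\begin{equation*}
(r^{N-1}u')'=r^{N-1}v^p,\qquad (r^{N-1}v')'=r^{N-1}f(u'),
\end{equation*}
since for a radial increasing $u$ one has $|\nabla u|=u'\ge 0$. Both $r^{N-1}u'$ and $r^{N-1}v'$ are nondecreasing and vanish at $r=0$, so $u,v$ are increasing and convex-like; in particular $u'(r)>0$ for $r>0$. For part (i), I would argue exactly as in the ball case: a global solution exists precisely when the (necessarily monotone) solution does not blow up in finite $r$. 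Multiplying, integrating, and using the monotonicity of $f$ and $F$, one derives a differential inequality of Keller--Osserman type for the quantity $u'$ (or for a suitable combination of $u',v$), leading to the first-order relation whose integrability is governed by $\int_1^\infty \big(\int_0^s F(t)\,dt\big)^{-p/(2p+1)}ds$. Divergence of this integral forces the solution to be continuable to all of $[0,\infty)$ (no finite blow-up), giving existence; convergence produces, by the same computation run backwards from an arbitrary interior point, a solution that blows up at a finite radius, hence no entire solution. The bookkeeping here is essentially the $R\to\infty$ limit of the proof of Theorem \ref{thm1}(i), so I would cite that argument and only indicate the modifications.

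For part (ii), with $f(t)=t^q$ and the subcriticality assumptions $q\ge 1>p$, $pq<1$, the expected asymptotics are the self-similar (scale-invariant) profiles: one checks that $u(x)=A|x|^{\alpha}$, $v(x)=B|x|^{\beta}$ with $\alpha=\frac{2+2p-pq}{1-pq}$, $\beta=\frac{q+2}{1-pq}$ solve the system identically provided $A,B$ take the stated values (this is where the explicit constants come from: substitute, match powers of $r$, and solve the resulting two algebraic equations for $A,B$). The condition $pq<1$ guarantees $\alpha,\beta>0$ so these are genuine growing solutions. The task is then to show every positive radial solution is asymptotic to this profile. I would introduce the standard logarithmic change of variables $t=\ln r$ together with the similarity scalings
\begin{equation*}
X(t)=r^{-\alpha}u,\quad Y(t)=r^{1-\alpha}u',\quad Z(t)=r^{-\beta}v,\quad W(t)=r^{1-\beta}v',
\end{equation*}
which convert the system into an autonomous first-order system in $(X,Y,Z,W)$. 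The self-similar solution becomes a fixed point $P_\infty$ of this autonomous system, and the theorem amounts to proving that the $\omega$-limit set of every trajectory coming from an actual solution (i.e. the orbit emanating from the origin, which singles out a one-parameter family) is $\{P_\infty\}$.

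The main obstacle — and the place where hypothesis \eq{div}, $\frac{p(q^2-4)}{1-pq}\le 2(N-1)$, enters — is establishing global convergence to $P_\infty$ rather than mere local stability. The strategy I would use is the one advertised in the introduction: the autonomous system is cooperative with negative divergence (the dissipativity coming from the $(N-1)/r$ damping term, quantified precisely by \eq{div}), so by the theory of monotone/cooperative flows with a Bendixson--Dulac-type volume-contraction property one rules out periodic orbits and non-trivial recurrence, and every bounded trajectory must converge to an equilibrium. One then shows (a) the relevant trajectory is bounded — using part (i) to exclude blow-up and a separate lower-bound argument, e.g. comparison with subsolutions, to exclude decay/degeneration, so the orbit stays in the cooperative region with $Y,W>0$; and (b) $P_\infty$ is the only equilibrium in the closure of that region, the other candidates (such as the origin of the $(X,Y,Z,W)$-space) being shown unreachable from the solution emanating at $r=0$. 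Inequality \eq{div} is exactly the sharp threshold making the divergence negative throughout the invariant region, hence it is the hypothesis that powers the Dulac argument; without it the linearization at $P_\infty$ could fail to be a sink and spiralling could occur. Once convergence $(X,Y,Z,W)\to P_\infty$ is in hand, unwinding the change of variables yields $u(x)/|x|^\alpha\to A$ and $v(x)/|x|^\beta\to B$ with the stated constants, completing the proof. I would expect the boundedness step (a) and the careful identification of the invariant cooperative region to be the most delicate parts of the write-up, with the algebra for the constants $A,B$ being routine though lengthy.
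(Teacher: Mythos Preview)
Your treatment of part (i) is essentially the paper's: the global existence/nonexistence dichotomy is indeed the $R\to\infty$ shadow of Theorem \ref{thm1}, and the same Keller--Osserman integral controls finite-radius blow-up.

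For part (ii), however, your approach diverges from the paper's and has two concrete gaps.

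\emph{Different change of variables.} You use the similarity scalings $X=r^{-\alpha}u$, $Y=r^{1-\alpha}u'$, $Z=r^{-\beta}v$, $W=r^{1-\beta}v'$. After dropping $X$ (which decouples), the resulting $3$-system
\[
Y_t=Z^p-(N-2+\alpha)Y,\qquad Z_t=W-\beta Z,\qquad W_t=Y^q-(N-2+\beta)W
\]
has \emph{constant} negative divergence $-(N-2+\alpha)-\beta-(N-2+\beta)$, independent of \eq{div}. So your claim that \eq{div} is ``exactly the sharp threshold making the divergence negative'' is wrong for your coordinates. The paper instead uses Emden--Fowler ratios $Y=rv'/v$, $Z=rv^p/u'$, $W=r(u')^q/v'$, which produce a \emph{quadratic} cooperative system; there the divergence is $-W+(q-2)Z+(p-2)Y+N+2-qN+q$, and \eq{div} is precisely what forces this to be negative on the relevant box. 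If your route worked, it would actually prove the theorem \emph{without} \eq{div}, which should make you suspicious.

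\emph{Boundedness is the real obstacle.} In your coordinates, as $t\to -\infty$ one has $u\to u(0)>0$ and $v\to v(0)>0$, so $r^{-\alpha}u\to\infty$, $r^{-\beta}v\to\infty$; similarly $Y,W\to\infty$. The trajectory enters from infinity. Your sketch ``use part (i) to exclude blow-up'' only gives global existence in $r$, not boundedness of the \emph{scaled} variables as $t\to+\infty$; these are different statements. Moreover, in your system both $(0,0,0)$ and $P_\infty$ are asymptotically stable sinks, so even with boundedness you must supply a genuine lower bound on growth to exclude the origin. You acknowledge these as ``the most delicate parts'' but offer no mechanism.

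The paper sidesteps both issues by design: with the ratio variables it shows (Lemma \ref{lrn3}, via comparison and L'H\^opital at $r=0$) that the trajectory lives in the closed ordered interval $\overline{[[\zeta_1,\zeta_2]]}$ between two explicit equilibria for all time. Boundedness is then automatic, the divergence condition \eq{div} rules out cycles there, and the only equilibrium the orbit can reach forward is $\zeta_2$ because $Y_t>0$ near $\zeta_1$. Your similarity-variable strategy is exactly what the paper uses for Theorem \ref{thm3} in the ball, where boundedness is obtained by comparison with a finite-time blow-up solution; that comparison is unavailable here since, by part (i), no radial solution blows up. This is why the paper switches to the ratio variables for the whole-space problem.
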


\begin{remark} (i) Condition \eq{div} hold in particular if $q\leq 2$.

(ii) It is easy to see that $(u_0,v_0)$ given by
$$
\left\{
\begin{aligned}
u_{0}(x)&=\Big[\frac{(2+q)(N(1-pq)+p(2+q))^{1/p}(N(1-pq)+q(2p+1))}{(1-pq)^{\frac{2p+2-pq}{p}}(2+2p-pq)^{\frac{pq-1}{p}}}\Big]^{\frac{p}{pq-1}}  |x|^{\frac{2+2p-pq}{1-pq}}\\
 v_{0}(x)&=\Big[\frac{(2+q)(N(1-pq)+p(2+q))^{q}(N(1-pq)+q(2p+1))}{(1-pq)^{2+q}}\Big]^{\frac{1}{pq-1}}|x|^{\frac{2+q}{1-pq}}
\end{aligned}
\right.
$$
is a solution of \eq{sys1} with $f(t)=t^q$ that vanishes at the origin. Theorem \ref{thmrn} states  that any positive radial solution $(u,v)$ of \eq{sys1} with $f(t)=t^q$ behaves like $(u_{0},v_{0})$ at infinity.

The remaining of the paper is organised as follows. Section 3 contains a detour in dynamical systems; here we state the main tools which we use to study the asymptotic behaviour in Theorems \ref{thm3} and \ref{thmrn}. The following sections contain the proofs of our main results.

\end{remark}

\section{A detour in dynamical systems}
For any points $x=(x_1,x_2,x_3)$, $y=(y_1,y_2,y_3)$ in $\R^{3}$ define the open ordered interval
$$
[[x,y]]=\{z\in \R^3:x<z<y\}\subset \R^3.
$$
Consider the intial value problem
\begin{equation}\label{det1}
\left\{
\begin{aligned}
&\zeta_{t}=g(\zeta) \quad\mbox{ for } t\in \R,\\
&\zeta(0)=\zeta_{0},
\end{aligned}
\right.
\end{equation}
where $ g:\R^{3}\rightarrow \R$ is a $C^{1}$ function. This implies that for any $\zeta_{0}\in \R^{3},$ there exist a unique solution $\zeta$ of \eq{det1} defined in a maximal time interval. We denote by $\phi(\cdot,\zeta_{0})$ the flow associated to \eq{det1}, that is,  $t\longmapsto \phi(t,\zeta_{0})$ is the unique solution of \eq{det1} defined in maximal time interval. We shall assume that the vector field $g$ is cooperative, that is
$$
\frac{\partial g_{i}}{\partial x_{j}}\geq 0 \quad \mbox{ for } 1\leq i,j\leq 3,\;\; i\neq j.
$$
 The following results are due to Hirsch \cite{Hirsch1989, Hirsch1990}.
\begin{theorem}\label{thmdet1}{\rm (see \cite[Theorem1]{Hirsch1990})}
 Any compact limit set of \eq{det1} contains an equilibrium or is a cycle.
\end{theorem}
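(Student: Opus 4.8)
The plan is to read Theorem~\ref{thmdet1} as the Poincar\'e--Bendixson theorem for three-dimensional cooperative systems and to reproduce Hirsch's monotone dynamical systems argument. Let $L$ be a nonempty compact $\omega$-limit set of some trajectory of \eqref{det1} (the $\alpha$-limit case is entirely analogous, by reversing time). Since the forward orbit accumulating on $L$ is then precompact, $L$ is compact, connected and invariant under the flow $\phi$. If $L$ contains an equilibrium there is nothing left to prove, so I would assume that $g$ does not vanish on $L$ and set out to show that $L$ is a single periodic orbit.

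First I would record the monotonicity of the flow: because $\partial g_i/\partial x_j\ge 0$ for $i\neq j$, Kamke's comparison theorem shows that $\phi(t,\cdot)$ preserves the componentwise partial order of $\R^3$ for every $t\ge 0$, i.e.\ $x\le y$ implies $\phi(t,x)\le\phi(t,y)$. From this I would next deduce Hirsch's nonordering principle: a limit set of a monotone flow contains no two distinct comparable points, so $L$ is an antichain for the order $\le$. The purpose of this step is that an unordered compact subset of $\R^3$ is essentially planar. Concretely, let $\pi:\R^3\to H:=\{z:z_1+z_2+z_3=0\}$ be the projection along the diagonal direction $e=(1,1,1)$; if $\pi(p)=\pi(q)$ with $p,q\in L$, then $q-p$ is a multiple of $e$, so $p$ and $q$ are comparable and therefore equal. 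Hence $\pi$ maps $L$ homeomorphically onto the compact planar set $\pi(L)\subset H\cong\R^2$ and carries the (non-self-intersecting) orbit arcs lying in $L$ to simple continuous curves in $H$.

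With the planar picture available, the remainder is the classical argument transplanted to $H$. I would fix $p\in L$ and, using $g(p)\neq 0$, choose a local transversal $\Sigma$ through $p$; the forward orbit of $p$ stays in $L$ and meets $\Sigma$ in a sequence $z_1,z_2,\dots$. Projecting, the points $\pi(z_i)$ lie on the arc $\pi(\Sigma)$ and --- by the Jordan curve theorem in $H$, applied to the closed curve formed by an orbit arc together with a sub-arc of the transversal, exactly as in the two-dimensional proof --- they are ordered monotonically along $\pi(\Sigma)$. If two consecutive crossings coincide, the orbit of $p$ is periodic; otherwise the standard Poincar\'e--Bendixson bookkeeping identifies $\omega(p)$ as a periodic orbit $\Gamma\subseteq L$. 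A final flow-box argument near $\Gamma$ forces any orbit of $L$ accumulating on $\Gamma$ to coincide with $\Gamma$, so that $L=\Gamma$ is a cycle.

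The hard part is the middle step. The genuinely substantive points are (a) establishing the nonordering principle for a field that is only cooperative rather than strongly monotone --- where some $g_i$ may vanish and the order cone degenerates, so that the usual strong-ordering arguments need care --- and (b) verifying that the conjugated flow on the non-open, possibly non-smooth set $\pi(L)$ still admits local sections, first-return maps, and monotone crossing sequences, so that a $C^0$ version of the Poincar\'e--Bendixson theorem genuinely applies. Once these are in hand, cooperativity is used only through the nonordering principle, and it is precisely that property which compels the three-dimensional flow to behave, on its limit sets, like a planar one.
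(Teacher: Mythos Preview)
The paper does not prove Theorem~\ref{thmdet1}; it is quoted verbatim as a background result from Hirsch \cite{Hirsch1990} and used as a black box in the subsequent analysis. There is therefore no proof in the paper to compare your proposal against.

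That said, your sketch is a faithful outline of Hirsch's original argument: monotonicity of the flow, the nonordering of limit sets, the homeomorphic projection of $L$ onto a planar set via $\pi$ along the diagonal, and then a Poincar\'e--Bendixson type analysis on the image. You correctly flag the delicate points, namely the nonordering principle when the system is merely cooperative (not irreducible/strongly monotone) and the legitimacy of transferring flow-box and first-return constructions to the projected, non-open set. For the purposes of this paper none of that work is required --- the theorem is simply invoked --- but if you wished to include a self-contained proof, your plan is the right one and the two caveats you identify are exactly where the effort would have to go.
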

\begin{de}
A circuit is a finite sequence of equilibria $\zeta_{1},\zeta_{2},\dots,\zeta_{n}=\zeta_{1}$, $(n\geq 2)$ such that $W^{u}(\zeta_{i})\cap W^{s}(\zeta_{i+1})$ is non-empty, where $W^{u}$,\;\;$W^{s}$ denote the stable and unstable manifolds.
\end{de}
\begin{remark}
If all equilibria are hyperbolic and their stable and unstable manifolds are mutually transverse, then there cannot be any circuit.
\end{remark}

\begin{theorem}\label{thmdet2}{\rm (see \cite[Theorem 2]{Hirsch1990}). }
Let $K\subset \R^{3}$ be a compact set such that:
\begin{enumerate}
\item [ (i) ] All equilibria in $K$ are hyperbolic and there are no circuits.
\item [ (ii)] For any $T>0,$ the number of cycles in $K$ having period less than or equal to $T$ is finite.
\end{enumerate}
Then:
\begin{enumerate}
\item [ (a) ] Every limit set in $K$ is an equilibrium or cycle.
\item [ (b) ] The number of cycles in $K$ is finite.
\end{enumerate}

\end{theorem}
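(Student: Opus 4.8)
\medskip

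\noindent\textbf{Proof strategy.}
The plan is to reduce the three–dimensional cooperative flow \eq{det1} to an essentially planar situation and then apply Poincar\'e--Bendixson theory, using the two hypotheses exactly to discard the degenerate alternatives. The structural input I would take from Hirsch \cite{Hirsch1989,Hirsch1990} is this: a compact limit set of \eq{det1} — and, more generally, any compact connected invariant subset of $K$ that is internally chain transitive, in particular any Hausdorff limit of periodic orbits — is contained in an invariant Lipschitz two–dimensional submanifold of $\R^3$; moreover, on such a surface the flow has well-defined first–return maps to one–dimensional transversals and, because the system is cooperative, these return maps are order-preserving. Granting this, the Poincar\'e--Bendixson trichotomy applies on the surface: any such set is a single equilibrium, a single periodic orbit, or a polycycle (a finite cyclic union of equilibria joined by connecting orbits, possibly a single equilibrium together with a homoclinic loop).

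For part (a): let $L\subset K$ be an $\omega$- or $\alpha$-limit set, so $L$ is compact, connected and invariant. By Theorem \ref{thmdet1}, either $L$ is a cycle, in which case there is nothing to prove, or $L$ contains an equilibrium. In the latter case the trichotomy forces $L$ to be a single equilibrium or a polycycle; but a polycycle is a cyclic chain $\zeta_1,\dots,\zeta_n=\zeta_1$ ($n\geq 2$) with $W^{u}(\zeta_i)\cap W^{s}(\zeta_{i+1})\neq\emptyset$ for each $i$, i.e.\ exactly a circuit, which hypothesis (i) forbids — hyperbolicity of the $\zeta_i$ making the chain and its connecting orbits genuine. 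Hence $L$ reduces to a single equilibrium, which proves (a).

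For part (b): I would argue by contradiction. If $K$ contained infinitely many cycles, then by (ii) their periods are unbounded, so one may pick cycles $\gamma_k\subset K$ with periods $T_k\to\infty$; since $K$ is compact, after passing to a subsequence $\gamma_k\to L$ in the Hausdorff metric, where $L\subset K$ is nonempty, compact, connected, invariant, and internally chain transitive. Apply the trichotomy to $L$. It cannot be a polycycle, since that is a circuit, excluded by (i). It cannot be a single equilibrium $e$: then, for large $k$, $\gamma_k$ would lie in an arbitrarily small neighbourhood of the hyperbolic point $e$, whereas the only complete orbit remaining in a small neighbourhood of a hyperbolic equilibrium is $e$ itself, so no such cycle $\gamma_k$ can exist. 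And it cannot be a single cycle $\gamma$: then, for large $k$, $\gamma_k$ lies in a thin annular neighbourhood of $\gamma$ inside the invariant surface and meets a one-dimensional transversal in periodic points of the order-preserving return map; an order-preserving interval map has only fixed points as periodic points, so $\gamma_k$ crosses the transversal just once and $T_k$ stays close to the period of $\gamma$, contradicting $T_k\to\infty$. Each case being impossible, $K$ contains only finitely many cycles, which together with (a) gives the full statement.

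The step I expect to be the real obstacle — and the reason this is Hirsch's theorem rather than a short exercise — is the first one: establishing that limit sets, and Hausdorff limits of periodic orbits, of a cooperative system in $\R^3$ are genuinely two-dimensional objects (embeddable in invariant Lipschitz surfaces carrying order-preserving return maps). Once that machinery is in hand, the case analyses in (a) and (b) are brief, the only mildly delicate point being the exclusion of long closed orbits that wind many times around a short cycle, which is precisely where the monotonicity of the return map is needed.
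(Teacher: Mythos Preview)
The paper does not prove this statement: Theorem~\ref{thmdet2} is stated with the attribution ``see \cite[Theorem 2]{Hirsch1990}'' and is used as a black box in Sections~6 and~7. There is therefore no proof in the paper to compare your proposal against.

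That said, your sketch is a fair outline of Hirsch's actual argument. The crucial structural fact you flag --- that compact limit sets (and Hausdorff limits of periodic orbits) of a three-dimensional cooperative system lie on invariant Lipschitz surfaces on which one can run Poincar\'e--Bendixson with monotone return maps --- is exactly the machinery from \cite{Hirsch1989,Hirsch1990} that underpins the result, and you are right to identify it as the hard step rather than the case analyses in (a) and (b). Your exclusion arguments in (b) (no long cycles near a hyperbolic equilibrium; monotone return map forces nearby cycles to have nearby periods) are the correct ideas. One small caveat: the claim that Hausdorff limits of periodic orbits are internally chain transitive, and that such sets inherit the surface embedding, deserves a reference or a line of justification if you intend this as more than a heuristic; in Hirsch's papers this is handled, but it is not an automatic consequence of the limit-set statement alone.
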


\begin{theorem}\label{thmdet3}{\rm (see \cite[Theorem 7]{Hirsch1989})}
Let $\zeta_{1}$, $\zeta_{2}\in \R^{3}$ such that $\zeta_1< \zeta_2$. If
$$
{\rm div}{g}<0 \quad\mbox{ in } [[\zeta_1,\zeta_2]],
$$ 
then \eq{det1} has no cycles in $[[\zeta_1,\zeta_2]].$
\end{theorem}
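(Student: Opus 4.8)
The plan is to argue by contradiction, assuming \eqref{det1} has a cycle $\gamma$ — a nonconstant periodic orbit, of minimal period $T>0$ — lying in the open box $U:=[[\zeta_{1},\zeta_{2}]]$. Since $g\in C^{1}$ is cooperative, Kamke's comparison theorem makes the flow monotone: $x\le y$ implies $\phi(t,x)\le\phi(t,y)$ for $t\ge0$. First I would show that \emph{the cycle $\gamma$ is unordered}, i.e.\ no two distinct points of $\gamma$ are comparable for the componentwise order. Indeed, if $x=\phi(t_{0},y)\le y$ with $x\neq y$ and $0<t_{0}<T$ (reducing $t_{0}$ modulo $T$), then by monotonicity $\big(\phi(nt_{0},y)\big)_{n\ge0}$ is a nonincreasing sequence in $\R^{3}$ lying in the compact set $\gamma$, hence convergent; on a nonconstant periodic orbit this is possible only if $t_{0}/T\in\mathbb{Q}$, and then a finite chain of inequalities that closes up to $y$ forces $\phi(t_{0},y)=y$, contradicting $x\neq y$. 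Two consequences, with $e:=(1,1,1)$: the orthogonal projection $P$ onto $e^{\perp}$ is injective on $\gamma$ (two points of $\gamma$ differing by a multiple of $e$ would be comparable), and $g(x)\notin\R e$ for every $x\in\gamma$ (otherwise $\phi(t,x)$ would be comparable to $x$ for small $t$); in particular $g\neq0$ on $\gamma$, and $\Gamma:=P(\gamma)$ is a regular Jordan curve in $e^{\perp}\cong\R^{2}$.

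Next I would construct a spanning disk. Let $\Delta$ be the bounded Jordan domain enclosed by $\Gamma$. By the first step $\gamma$ is the graph over $\Gamma$ of the coordinate along $e$; since $U$ is convex, $\mathrm{conv}(\gamma)\subset U$, and extending the graphing function over $\overline{\Delta}$ while keeping its values in the chords of $\mathrm{conv}(\gamma)$ in the $e$-direction produces an embedded disk $D\subset U$ with $\partial D=\gamma$, realized as a graph over $\overline{\Delta}$. Because $\gamma$ is flow-invariant, every $\phi(t,D)$ is again an embedded disk with $\partial\big(\phi(t,D)\big)=\gamma$.

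The core of the argument is then a three-dimensional Bendixson–Dulac-type computation driven by $\mathrm{div}\,g<0$. The mechanism is that for small $t$ the disks $D$ and $\phi(t,D)$ cobound a solid region $W_{t}\subset U$ whose lateral boundary $\bigcup_{0\le s\le t}\phi(s,\gamma)$ is just $\gamma$, of zero area, so the divergence theorem gives $\int_{\phi(t,D)}g\cdot n\,dA-\int_{D}g\cdot n\,dA=\int_{W_{t}}\mathrm{div}\,g\,dV$; together with the Lie-derivative identity $\mathcal L_{g}(\iota_{g}\,dV)=(\mathrm{div}\,g)\,\iota_{g}\,dV$ this yields $\frac{d}{dt}\int_{\phi(t,D)}g\cdot n\,dA=\int_{\phi(t,D)}(\mathrm{div}\,g)(g\cdot n)\,dA$. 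If the spanning surface were transverse to $g$ away from $\gamma$, this flux would be strictly monotone in $t$ while $\partial\big(\phi(t,D)\big)=\gamma$ is frozen, and playing this against the closed-up structure of the periodic orbit would produce the contradiction. I expect the genuine obstacle to be exactly that $D$ cannot be chosen transverse to $g$: it is tangent to $g$ all along $\partial D=\gamma$ and, generically, along an interior curve as well, so $g\cdot n$ changes sign on $D$, the swept region carries orientation cancellations, and the flux identity must be read as an identity of currents. One natural route I would pursue to repair this — and the content of Hirsch's Theorem~7 in \cite{Hirsch1989} — is to invoke monotonicity of the flow a second time to upgrade $D$ to a genuinely flow-invariant spanning surface over $\overline{\Delta}$, thereby reducing everything to a planar Bendixson-type criterion for the two-dimensional flow induced on that surface; carrying this out while keeping every surface inside the convex box $U$, where $\mathrm{div}\,g<0$, is the crux.
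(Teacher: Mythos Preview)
The paper does not prove this statement at all: Theorem~\ref{thmdet3} is quoted from Hirsch \cite[Theorem~7]{Hirsch1989} as a black-box tool, exactly like the other results in Section~3. There is therefore no ``paper's own proof'' to compare your proposal against.

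That said, your outline is broadly in the right spirit for how Hirsch's result is actually proved. The key structural facts you identify are correct: monotonicity of the cooperative flow forces any periodic orbit to be unordered, so it projects injectively onto the plane $e^{\perp}$ orthogonal to $e=(1,1,1)$, and one then wants to run a two-dimensional Bendixson--Dulac argument using ${\rm div}\,g<0$. However, your write-up is explicitly a sketch with acknowledged gaps, and those gaps are real. The attempt to build a spanning disk $D$ and push it with the flow runs into exactly the transversality problem you flag, and the fix you gesture at---upgrading $D$ to a flow-invariant surface---is not a small technicality but essentially the whole content of the theorem. Hirsch's actual argument does not proceed via your flux computation on moving disks; rather, he shows that a cooperative (or competitive) system in $\R^{3}$ is topologically equivalent, on suitable invariant sets, to a planar system, and then invokes the standard Bendixson criterion there. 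Your proposal does not supply this reduction, and the divergence-theorem calculation you wrote down does not close without it.

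In short: nothing to compare against in this paper, and your sketch, while pointing in the right direction, is not a proof as it stands.
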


\begin{de}
A subset $A\subset \R^{3}$ is said to be positively invariant for the flow $\phi$ if $\phi(t,\zeta)\in A$ for all $\zeta \in A$ and $t\geq 0.$ $A$ is called invariant for $\phi$ if 
$$
\phi(t,A)= A \quad\mbox{ for all } t\geq 0,
$$
that is, for any $z\in A$ and $t\geq 0$ there exists $\zeta \in A$ such that $\phi(t,\zeta)=z.$

\medskip

The following notion of chain recurrence is due to Conley \cite{C1972,C1978}.
\end{de}

\begin{de}
Let $A\subset \R^{3}$ be a nonempty positively invariant subset for $\phi$ and $\zeta,\zeta'\in A.$
\begin{enumerate}
\item [(i)] For $\varepsilon> 0$ and $t> 0$, an $(\varepsilon,t)$-chain from $\zeta \in A$ to $\zeta'\in A$ is a sequence of points in $A$, $\zeta=\zeta_{1},\zeta_{2},\dots,\zeta_{n},\zeta_{n+1}=\zeta'$ and of times $t_{1},t_{2},\dots, t_{n}\geq t$ such that $|\phi(t_{i},\zeta_{i})-\zeta_{i+1}|< \varepsilon.$
\item [(ii)] A point $\zeta \in A$ is called chain recurrent if for every $\varepsilon> 0, t>0$ there is an $(\varepsilon,t)$-chain from $\zeta$ to $\zeta$ in $A.$
\item [(iii)] The set $A$ is said to be chain recurrent if every point $\zeta\in A$ is chain recurrent in $A.$
\end{enumerate}
\end{de}

\medskip

\begin{theorem}\label{thmmst1}{\rm (see \cite[Theorem 3.3]{C1972}, \cite[Lemma 1.4]{MST1995})}
If $A$ is connected and chain recurrent, then for any $\zeta,\zeta' \in A$ and any $\varepsilon, t>0$, there exists an $(\varepsilon,t)$ chain from $\zeta$ to $\zeta'.$
\end{theorem}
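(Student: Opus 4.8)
The plan is to fix $\varepsilon>0$ and $t>0$ once and for all and to introduce on $A$ the relation $\zeta\sim\zeta'$ meaning that there exist both an $(\varepsilon,t)$-chain from $\zeta$ to $\zeta'$ and an $(\varepsilon,t)$-chain from $\zeta'$ to $\zeta$. The conclusion then amounts to showing that $\sim$ has a single equivalence class, since any two points of a single class are joined by an $(\varepsilon,t)$-chain.

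First I would verify that $\sim$ is an equivalence relation on $A$. Reflexivity is precisely the chain recurrence hypothesis: every $\zeta\in A$ admits an $(\varepsilon,t)$-chain from $\zeta$ to itself. Symmetry is immediate from the definition. Transitivity follows by concatenation: given an $(\varepsilon,t)$-chain $\zeta=\zeta_1,\dots,\zeta_{n+1}=\zeta'$ with times $t_1,\dots,t_n\ge t$ and an $(\varepsilon,t)$-chain $\zeta'=\eta_1,\dots,\eta_{m+1}=\zeta''$ with times $\tau_1,\dots,\tau_m\ge t$, the juxtaposed sequence $\zeta_1,\dots,\zeta_n,\zeta',\eta_2,\dots,\eta_{m+1}$ together with the concatenated list of times is again an $(\varepsilon,t)$-chain, because every jump estimate is inherited from one of the two pieces and all times remain $\ge t$.

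The heart of the argument is to show that each equivalence class is open in $A$. Fix $\zeta\in A$ and take $\zeta''\in A$ with $|\zeta''-\zeta|<\varepsilon/2$. The key observation is that chain recurrence provides, for the stricter tolerance $\varepsilon/2$, both an $(\varepsilon/2,t)$-chain from $\zeta$ to $\zeta$ and an $(\varepsilon/2,t)$-chain from $\zeta''$ to $\zeta''$. Replacing the final point $\zeta$ of the former by $\zeta''$ affects only the last jump, which becomes at most $(\varepsilon/2)+|\zeta''-\zeta|<\varepsilon$; this yields an $(\varepsilon,t)$-chain from $\zeta$ to $\zeta''$. Symmetrically, replacing the final point $\zeta''$ of the latter chain by $\zeta$ produces an $(\varepsilon,t)$-chain from $\zeta''$ to $\zeta$. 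Hence $\zeta\sim\zeta''$, so the class of $\zeta$ contains the relative ball $\{\xi\in A:|\xi-\zeta|<\varepsilon/2\}$ and is therefore open in $A$.

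Finally, the equivalence classes of $\sim$ form a partition of $A$ into pairwise disjoint nonempty open sets; since $A$ is connected there can be only one such class, which is exactly the assertion. I do not expect a genuine obstacle here once one commits to fixing $\varepsilon$ and $t$ at the outset; the only point needing care is to use the freedom to take the auxiliary recurrence chains with the tighter tolerance $\varepsilon/2$, which is what keeps the endpoint perturbations within the admissible error $\varepsilon$ and lets one bypass any appeal to continuity of the flow $\phi$.
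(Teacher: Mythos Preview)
The paper does not actually supply a proof of this theorem: it is quoted from the literature with citations to Conley and to Mischaikow--Smith--Thieme, and no argument is given in the text. So there is nothing in the paper to compare against directly.

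Your proof is correct and is essentially the classical argument. The one step worth making explicit is why each equivalence class is open: you show that for any $\zeta\in A$ the relative ball of radius $\varepsilon/2$ about $\zeta$ lies in the class of $\zeta$; since $\zeta$ is arbitrary, every point of $A$ is interior to its own class, hence every class is open. That is what you intend, but stating it this way removes any doubt that you have only checked interiority at a single point. The use of chain recurrence at the tighter tolerance $\varepsilon/2$ (which the definition permits, since it must hold for all positive $\varepsilon$) is exactly the right device to avoid invoking continuity of $\phi$.
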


\bigskip

Consider now the initial value problem
\begin{equation}\label{fd1}
\left\{
\begin{aligned}
&\xi_{t}=G(t,\xi) \quad\mbox{ for } t>0,\\
&\xi(0)=\xi_{0}\in \R^{3},
\end{aligned}
\right.
\end{equation}
where $G$ is a $C^{1}$ function on $(0,\infty) \times \R^{3}$ such that $G(t,\cdot)\rightarrow g$ uniformly on compact subsets of $\R^{3}.$ We shall say that \eq{fd1} is asymptotically autonomous with the limit problem \eq{det1}. We denote by $\Phi(\cdot,\xi_{0})$ the semiflow defined by the initial value problem \eq{fd1}.

\medskip

The following result will play a crucial role in the proof of Theorem \ref{thm3}.

\begin{theorem}\label{thmfd2}{\rm (see \cite[Theorem 1.8]{MST1995})}
Let $\xi_{0}\in \R^{3}$ and assume the trajectory $\Phi(t,\xi_{0})$ associated to \eq{fd1} is bounded. Then the $\omega$-limit set $\omega_{\Phi}(\xi_{0})$ has the following properties:
\begin{enumerate}
\item [(a)] $\omega_{\Phi}(\xi_{0})$ is nonempty, compact and connected.
\item [(b)] $\omega_{\Phi}$ is invariant under the flow of $\phi$ of \eq{det1}, that is 
$$
\phi(t,\omega_{\Phi}(\xi_{0}))= \omega_{\Phi}(\xi_{0}) \quad\mbox{ for all } t\geq 0.
$$
\item [(c)] $\omega_{\Phi}(\xi_{0})$ attracts $\Phi(t,\xi_{0})$, that is,
$$
dist(\Phi(t,\xi_{0}),\omega_{\Phi}(\xi_{0}))\rightarrow 0 \quad\mbox{ as }t\rightarrow \infty.
$$
\item [(d)] $\omega_{\Phi}(\xi_{0})$ is chain recurrent for $\phi.$
\end{enumerate}
\end{theorem}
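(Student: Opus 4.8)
I will prove the four assertions in turn; (a)--(c) are by routine $\omega$-limit set arguments, while (d), which is the substantive point, will be proved by a shadowing argument comparing long pieces of the non-autonomous trajectory with orbits of the limit flow $\phi$ of \eq{det1}.

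\medskip
\noindent\textbf{A preliminary estimate.} For $s>0$ and $x\in\R^{3}$ let $\eta(\,\cdot\,;s,x)$ denote the maximal solution of $\xi_{t}=G(t,\xi)$ with $\xi(s)=x$, so that $\Phi(t,\xi_{0})=\eta(t;0,\xi_{0})$. The plan is to first record, via a Gronwall argument using $G(t,\cdot)\to g$ uniformly on compact subsets of $\R^{3}$ and the local Lipschitz continuity of the $C^{1}$ field $g$, the following: if $x_{n}\to x$ in $\R^{3}$, $s_{n}\to\infty$ and $T>0$, then $\eta(s_{n}+\tau;s_{n},x_{n})\to\phi(\tau,x)$ uniformly for $\tau\in[0,T]$ (standard compactness/Gronwall considerations keep all orbits involved in a fixed compact set and force all of them to be defined on $[0,T]$ for $n$ large). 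Informally: trajectory segments started at large times from nearly fixed data are uniformly close, on bounded time intervals, to orbits of \eq{det1}.

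\medskip
\noindent\textbf{Parts (a)--(c).} For (a) I would note that $\omega_{\Phi}(\xi_{0})=\bigcap_{t\ge 0}\overline{\{\Phi(s,\xi_{0}):s\ge t\}}$ is a nested intersection of nonempty compact connected sets (each tail is a continuous image of a half-line, hence connected, as is its closure), hence nonempty, compact and connected. For (b), given $y\in\omega_{\Phi}(\xi_{0})$ pick $t_{n}\to\infty$ with $\Phi(t_{n},\xi_{0})\to y$; the preliminary estimate with $x_{n}=\Phi(t_{n},\xi_{0})$, $s_{n}=t_{n}$ gives $\Phi(t_{n}+\tau,\xi_{0})=\eta(t_{n}+\tau;t_{n},\Phi(t_{n},\xi_{0}))\to\phi(\tau,y)$, so $\phi(\tau,y)\in\omega_{\Phi}(\xi_{0})$ and $\phi(\tau,\omega_{\Phi}(\xi_{0}))\subseteq\omega_{\Phi}(\xi_{0})$. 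For the reverse inclusion, fix $\tau>0$ and $y\in\omega_{\Phi}(\xi_{0})$, choose $t_{n}\to\infty$ with $\Phi(t_{n},\xi_{0})\to y$, and by boundedness pass to a subsequence with $\Phi(t_{n}-\tau,\xi_{0})\to w\in\omega_{\Phi}(\xi_{0})$; applying the estimate again, $\Phi(t_{n},\xi_{0})=\eta((t_{n}-\tau)+\tau;\,t_{n}-\tau,\,\Phi(t_{n}-\tau,\xi_{0}))\to\phi(\tau,w)$, whence $\phi(\tau,w)=y$. Part (c) is the usual argument: if it failed there would be $\varepsilon>0$ and $s_{n}\to\infty$ with $\mathrm{dist}(\Phi(s_{n},\xi_{0}),\omega_{\Phi}(\xi_{0}))\ge\varepsilon$, but a convergent subsequence of the bounded sequence $\Phi(s_{n},\xi_{0})$ would have its limit both in $\omega_{\Phi}(\xi_{0})$ and at distance $\ge\varepsilon$ from it.

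\medskip
\noindent\textbf{Part (d).} Fix $y\in\omega_{\Phi}(\xi_{0})$, $\varepsilon>0$, $T>0$; I must produce an $(\varepsilon,T)$-chain for $\phi$ from $y$ to $y$ inside $\omega_{\Phi}(\xi_{0})$. Let $L$ be a Lipschitz constant for $g$ on a compact neighbourhood of the closure of the trajectory, so $|\phi(\tau,a)-\phi(\tau,b)|\le e^{L\tau}|a-b|$ for orbits staying in that region, and fix $\delta>0$ (to be chosen). By (c) there is $t_{*}$ with $\mathrm{dist}(\Phi(t,\xi_{0}),\omega_{\Phi}(\xi_{0}))<\delta$ for $t\ge t_{*}$, and by the preliminary estimate there is $t_{**}\ge t_{*}$ with $|\eta(s+\tau;s,x)-\phi(\tau,x)|<\delta$ whenever $s\ge t_{**}$, $\tau\in[T,2T]$ and $x$ lies in the above neighbourhood. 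Since $y\in\omega_{\Phi}(\xi_{0})$ choose $t_{**}\le a<b$ with $\Phi(a,\xi_{0}),\Phi(b,\xi_{0})\in B_{\delta}(y)$ and $b-a=k\sigma$ for some integer $k\ge1$ and $\sigma\in[T,2T]$. Set $s_{i}=a+(i-1)\sigma$ for $i=1,\dots,k+1$, put $\zeta_{1}=\zeta_{k+1}=y$, and for $2\le i\le k$ let $\zeta_{i}$ be a nearest point of $\omega_{\Phi}(\xi_{0})$ to $\Phi(s_{i},\xi_{0})$, so $|\zeta_{i}-\Phi(s_{i},\xi_{0})|<\delta$ for all $i$ (for $i=1,k+1$ because $\Phi(a,\xi_{0}),\Phi(b,\xi_{0})\in B_{\delta}(y)$). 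Then, using $\Phi(s_{i+1},\xi_{0})=\eta(s_{i}+\sigma;s_{i},\Phi(s_{i},\xi_{0}))$ and the choices above,
\begin{align*}
|\phi(\sigma,\zeta_{i})-\zeta_{i+1}|
&\le e^{L\sigma}\,|\zeta_{i}-\Phi(s_{i},\xi_{0})| + \big|\phi(\sigma,\Phi(s_{i},\xi_{0}))-\Phi(s_{i+1},\xi_{0})\big| + |\Phi(s_{i+1},\xi_{0})-\zeta_{i+1}| \\
&< (e^{2LT}+2)\,\delta .
\end{align*}
Choosing $\delta<\varepsilon/(e^{2LT}+2)$, the sequence $\zeta_{1},\dots,\zeta_{k+1}$ with the times $\sigma,\dots,\sigma\ge T$ is an $(\varepsilon,T)$-chain from $y$ to $y$ in $\omega_{\Phi}(\xi_{0})$; as $y,\varepsilon,T$ were arbitrary, $\omega_{\Phi}(\xi_{0})$ is chain recurrent for $\phi$.

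\medskip
\noindent\textbf{Main obstacle.} The delicate point in (d) is that the $k$ per-step errors must not accumulate: the fix is to re-anchor each $\zeta_{i+1}$ to the actual non-autonomous trajectory (via a nearest point of $\omega_{\Phi}(\xi_{0})$) rather than iterating the limit flow $\phi$, so that each step's error is bounded solely by $\delta$ and the fixed quantities $L,T$, independently of $k$; forcing the two endpoints to be exactly $y$ rather than merely $\delta$-close is handled by absorbing the endpoint discrepancies into the $\varepsilon$-budget. The one piece of genuine analysis is the preliminary Gronwall/continuous-dependence estimate for the asymptotically autonomous system \eq{fd1} relative to \eq{det1}, which is routine but must be set up so that all orbits stay in a common compact set on the relevant time intervals.
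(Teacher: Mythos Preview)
The paper does not give its own proof of this statement: Theorem~\ref{thmfd2} is quoted verbatim from \cite[Theorem~1.8]{MST1995} and used as a black box in Section~3, so there is no in-paper argument to compare against. Your sketch is therefore not a reconstruction of something in the paper but an independent proof of the cited result.

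That said, your argument is essentially the one given in \cite{MST1995}. Parts (a)--(c) are indeed routine once one has the asymptotic continuous-dependence lemma you state at the outset (that $\eta(s_n+\tau;s_n,x_n)\to\phi(\tau,x)$ uniformly on compact $\tau$-intervals when $s_n\to\infty$, $x_n\to x$), and your proof of (d) by re-anchoring each chain node $\zeta_i$ to the nearest point of $\omega_\Phi(\xi_0)$ is exactly the mechanism Mischaikow--Smith--Thieme use to prevent accumulation of errors over an unbounded number of steps. The only points worth tightening in a full write-up are the ones you flag yourself: ensuring all relevant $\phi$-orbit segments remain in the fixed compact neighbourhood so the Lipschitz bound $e^{L\sigma}$ is valid, and justifying that one can always choose $a<b$ with $b-a\ge T$ and both endpoints $\delta$-close to $y$ (immediate since $y$ is an $\omega$-limit point). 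With those details filled in, the proof is correct.
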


\medskip

In particular, Theorem \ref{thmfd2}{\rm (d)} states that the invariant set consisting of two equilibria $e_{1}, e_{2}$  and a heteroclinic orbit that joins them, or a homoclinic orbit connecting an equilibrium point $e_3$ with itself cannot be the $\omega$-limit set of an asymtotically autonomous semiflow.

\section{Proof of Theorem \ref{thm1} }
A useful result in proving Theorem \ref{thm1} is the following lemma.
\begin{lemma}\label{lemma1.1}
We have
\begin{equation}\label{equiv_cond}
\int_1^\infty\frac{ds}{\Big(\displaystyle \int_0^s F(t)dt  \Big)^{p/(2p+1)}} <\infty\mbox{ if and only if }
\int_1^\infty\frac{ds}{\Big(\displaystyle \int_0^s \sqrt{f(t)}dt  \Big)^{2p/(2p+1)}} <\infty. 
\end{equation}
Moreover,
$$
\int_1^\infty\frac{ds}{\Big(\displaystyle \int_0^{2s} {F(t)}dt  \Big)^{p/(2p+1)}} 
\leq \int_1^\infty\frac{ds}{\Big(\displaystyle \int_0^{s}\sqrt{f(t)}dt  \Big)^{2p/(2p+1)}} \leq \int_1^\infty\frac{ds}{\Big(\displaystyle \int_0^{2s} {F(t)}dt  \Big)^{p/(2p+1)}}.
$$
\end{lemma}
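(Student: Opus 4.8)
The plan is to reduce the whole lemma to a single pointwise estimate. Write $G(s)=\int_0^s\sqrt{f(t)}\,dt$ and $H(s)=\int_0^sF(t)\,dt$, so that $F$, $G$ and $H$ are all continuous and strictly positive on $(0,\infty)$ (since $f>0$ on $(0,\infty)$). I claim that
$$
H(s)\le G(s)^2\le H(2s)\qquad\text{for every }s>0.
$$
Once this is established, the lemma is essentially bookkeeping: since $x\mapsto x^{-p/(2p+1)}$ is decreasing on $(0,\infty)$, the sandwich gives $H(2s)^{-p/(2p+1)}\le G(s)^{-2p/(2p+1)}\le H(s)^{-p/(2p+1)}$ for $s\ge1$, and integrating over $[1,\infty)$ produces the two displayed inequalities of the statement (with the last denominator being $\int_0^sF(t)\,dt$). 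The equivalence \eq{equiv_cond} then follows: the right-hand inequality gives the implication from finiteness of $\int_1^\infty H^{-p/(2p+1)}$ to finiteness of $\int_1^\infty G^{-2p/(2p+1)}$, while the left-hand inequality gives the converse after the substitution $\sigma=2s$ (which merely rescales and shifts the lower endpoint from $1$ to $2$), using that $\int_1^2H(\sigma)^{-p/(2p+1)}\,d\sigma<\infty$ because $H$ is positive and continuous on $[1,2]$.

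For the upper bound $G(s)^2\le H(2s)$: the Cauchy-Schwarz inequality gives
$$
G(s)^2=\Big(\int_0^s\sqrt{f(t)}\cdot1\,dt\Big)^2\le s\int_0^sf(t)\,dt=sF(s),
$$
and, since $F$ is nondecreasing, $H(2s)=\int_0^{2s}F(\tau)\,d\tau\ge\int_s^{2s}F(\tau)\,d\tau\ge sF(s)$. Chaining these two bounds yields $G(s)^2\le sF(s)\le H(2s)$.

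The heart of the argument is the lower bound $H(s)\le G(s)^2$, and this is the step where the monotonicity of $f$ is used in an essential way. Put $\phi(s)=G(s)^2-H(s)$; then $\phi(0)=0$ and $\phi'(s)=2\sqrt{f(s)}\,G(s)-F(s)$. The key point is that, because $\sqrt{f(t)}\le\sqrt{f(s)}$ whenever $0\le t\le s$, one has
$$
F(s)=\int_0^sf(t)\,dt=\int_0^s\sqrt{f(t)}\,\sqrt{f(t)}\,dt\le\sqrt{f(s)}\int_0^s\sqrt{f(t)}\,dt=\sqrt{f(s)}\,G(s),
$$
so that $\phi'(s)\ge\sqrt{f(s)}\,G(s)\ge0$. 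Hence $\phi$ is nondecreasing on $[0,\infty)$ and $\phi(s)\ge\phi(0)=0$, i.e. $H(s)\le G(s)^2$; this completes the sandwich and hence the proof.

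I expect the only genuine obstacle to be isolating this last estimate --- everything else is Cauchy-Schwarz, monotonicity of $F$, and a harmless change of variable. (In fact the same computation gives the slightly sharper $H(s)\le\frac{1}{2}G(s)^2$, and it makes clear that the last integral in the lemma should be read with $\int_0^sF(t)\,dt$ rather than $\int_0^{2s}F(t)\,dt$ in the denominator.)
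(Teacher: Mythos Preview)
Your proof is correct and structurally identical to the paper's: both reduce everything to the pointwise sandwich $\int_0^s F\le\big(\int_0^s\sqrt{f}\big)^2\le\int_0^{2s}F$ and then integrate. Your lower-bound step is exactly the paper's computation (differentiate $G^2-H$ and use $F(s)=\int_0^s\sqrt{f}\cdot\sqrt{f}\le\sqrt{f(s)}\,G(s)$ from monotonicity of $f$; the paper in fact keeps the factor $2$ you mention in your parenthetical). The only genuine difference is the upper bound: the paper differentiates $h(s)=\int_0^{2s}F-G(s)^2$ and checks $h'(s)\ge0$ via $f(t+s)\ge\sqrt{f(s)f(t)}$, again using monotonicity of $f$, whereas your Cauchy--Schwarz estimate $G(s)^2\le sF(s)\le\int_s^{2s}F\le H(2s)$ needs only monotonicity of $F$ (i.e.\ $f\ge0$) for that half. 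Your route is thus slightly more elementary for the upper bound; the paper's route is more symmetric in that both halves are handled by the same derivative trick. You are also right that the displayed chain of inequalities in the statement has a typo in the rightmost denominator.
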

\begin{proof}
Let $H:[0,\infty)\to [0, \infty)$ be given by 
$$
H(s)=2\int_0^s F(t)dt-\Big(\int_0^{s}\sqrt{f(t)}{dt}\Big)^2\quad\mbox{ for all }s\geq 0.
$$
Then $$H'(s)=2F(s)-2\Big(\int_0^{s}\sqrt{f(t)}{dt}\Big)\sqrt{f(s)}\quad\mbox{ for all }s\geq 0.$$
Therefore 
\begin{equation}
\begin{aligned}
\frac{H'(s)}{2}&=F(s)-\sqrt{f(s)}\int_0^{s}\sqrt{f(t)}{dt}=\int_0^{s}f(t){dt}-\sqrt{f(s)}\int_0^{s}\sqrt{f(t)}{dt}\\
&=\int_0^{s}\Big(f(t)-\sqrt{f(s)}\sqrt{f(t)}{dt}=\int_0^{s}\sqrt{f(t)}\Big(\sqrt{f(t)}-\sqrt{f(s)}\Big){dt}\\
&\leq0 \quad\mbox{ for all }s\geq 0.
\end{aligned}
\end{equation}
Hence, $H$ is nonincreasing which yields 
$H(s)\leq H(0)=0$. This further implies 
$$2\int_0^{s}F(t){dt}\leq\Big(\int_0^{s}\sqrt{f(t)}{dt}\Big)^{2} \quad\mbox{ for all } s\geq 0.$$
So, $$\int_1^\infty\frac{ds}{\Big(2\int_0^{s}F(t){dt}\Big)^{p/(2p+1)}}\geq \int_1^\infty\frac{ds}{\Big(\int_0^{s}\sqrt{f(t)}{dt}\Big)^{2p/(2p+1)}}. $$
In order to establish the first inequality in our lemma, let $h:[0,\infty)\to \R$ be defined by 
$$
h(s)=\int_0^{2s}{F(t)}{dt}-\Big(\int_0^{s}\sqrt{f(t)}{dt}\Big)^2 \quad\mbox{ for all }s\geq 0.
$$
Then,
\begin{equation*}
\begin{aligned}
 \frac{h'(s)}{2}&=F(2s)-\sqrt{f(s)}\int_0^{s}\sqrt{f(t)}{dt}\\
&\geq\int_s^{2s}f(t){dt}-\sqrt{f(s)}\int_0^{s}\sqrt{f(t)}{dt}\\
&=\int_0^{s}f(s+t){dt}-\sqrt{f(s)}\int_0^{s}\sqrt{f(t)}{dt}.\\
&=\int_0^{s}\Big(f(t+s)-\sqrt{f(s)}\sqrt{f(t)}\Big){dt}\\
& \geq 0 \quad\mbox{ for all }s\geq 0.
\end{aligned}
\end{equation*}
It follows that $h$ is nondecreasing which yields $h(s)\geq h(0)=0$ for all $s\geq 0$. This implies
\begin{equation}\label{eqg}
\Big(\int_0^{2s}F(t){dt}\Big)^{p/(2p+1)}\geq\Big(\int_0^{s}\sqrt{f(t)}{dt}\Big)^{2p/(2p+1)} \quad\mbox{ for all }s\geq 0.
\end{equation}
Therefore $$\int_1^\infty\frac{ds}{\Big(\int_0^{2s}F(t){dt}\Big)^{p/(2p+1)}}\leq\int_1^\infty\frac{ds}{\Big(\int_0^{s}\sqrt{f(t)}{dt}\Big)^{2p/(2p+1)}}.$$
This concludes the proof.
\end{proof}

\noindent{\bf Proof of Theorem \ref{thm1}.}  It is enough to prove (ii) and (iii). We shall divide our proof into three steps.

\noindent{\it Step 1: Let $(u,v)$ be any positive radial solution of \eq{sys}. Then, letting $w=u'$ we have
\begin{equation}\label{eqw3}
 \frac{v^{p}(r)}{N}\leq w'(r)\leq v^{p}(r)\quad\mbox{ for all }0<r<R,
\end{equation}
and}
\begin{equation}\label{eqv3}
 \frac{f(w(r))}{N}\leq v''(r)\leq f(w(r)) \quad\mbox{ for all }0<r<R.
\end{equation}

Indeed, we note first that $(w,v)$ satisfies
\begin{equation}\label{sys01}
\left\{
\begin{aligned}
&w'(r)+\frac{N-1}{r}w(r)=v^p(r)&&\quad\mbox{for all }0<r<R, \\
&v''(r)+\frac{N-1}{r}v'(r)=f(|w(r)|)&&\quad\mbox{for all }0<r<R,\\
&w(0)=v'(0)=0, v(0)>0.
\end{aligned}
\right.
\end{equation}
Integrating in the first equation of \eq{sys01} we find
\begin{equation}\label{eqw}
w(r)=r^{1-N}\int_0^r t^{N-1}v^p(t)dt\quad\mbox{ for all }0<r<R.
\end{equation}
This implies that $w>0$ in $(0,R)$ so $u$ is increasing. We now integrate in the second equation of \eq{sys01} to deduce
\begin{equation}\label{eqv1b}
v'(r)=r^{1-N}\int_0^r t^{N-1}f(w(t))dt\quad\mbox{ for all }0<r<R.
\end{equation}
This means $v'>0$ and $v$ is increasing on $(0,R)$. Using this fact in \eq{eqw} we find
\begin{equation}\label{eqw2}
w(r)\leq \frac{r}{N}v^p(r)\quad\mbox{ for all }0<r<R.
\end{equation}
Combining \eq{eqw2} with the first equation of \eq{sys01} we have
$$
v^{p}(r)\leq w'(r)+\frac{N-1}{N}v^{p}(r) \quad\mbox{ for all }0<r<R
$$
which implies \eq{eqw3}.
In particular $w'>0$ in $(0,R)$ so $w$ is increasing.
Using \eq{eqv1b} we obtain
\begin{equation}\label{eqv2}
 v'(r)\leq \frac{r}{N}f(w(r))\quad\mbox{ for all }0<r<R.
\end{equation}
Using \eq{eqv2} in the second equation of \eq{sys01} and also  the fact that $w>0$ we deduce the estimate \eq{eqv3}.
\medskip

\noindent {\it Step 2: System \eq{sys} admits a positive radial solution $(u,v)$ such that $\lim_{r\nearrow R} v(r)=\infty$ if and only if }
\begin{equation}\label{int01}
\int_{1}^\infty\frac{ds}{\Big(\displaystyle \int_0^s F(t)dt  \Big)^{p/(2p+1)}} <\infty.
\end{equation}
Assume first that $(u,v)$ is a positive radial solution of \eq{sys} with $\lim_{r\nearrow R} v(r)=\infty$.

Using \eq{eqv3}  we have 
$$
v''(r)\leq f(w(r)) \quad\mbox{ for all }0<r<R.
$$
Multiplying the above inequality by $v'(r)$ and then integrating over $[0,r]$  we have
\begin{equation*}
\begin{aligned}
\frac{(v'(r))^{2}}{2}&\leq \int_0^{r}v'(t)f(w(t)){dt}\\
&\leq f(w(r))\int_0^r v'(t)dt \\
&\leq f(w(r))v(r)  \quad\mbox{ for all }0<r<R,
\end{aligned}
\end{equation*}
which gives
$$
v'(r)(v(r))^{-1/2}\leq C\sqrt{f(w(r))} \quad\mbox{ for all }0<r<R, \mbox{ where } C>0.
$$
Multiplying the above inequality by $w'(r)$ and then using \eq{eqw3} we have
$$
v'(r)\frac{v^{p-1/2}(r)}{N}\leq w'(r)v'(r)(v(r))^{-1/2}\leq Cw'(r)\sqrt{f(w(r))}
$$
for all $0<r<R$. This further implies
$$
\frac{1}{N}\left(\frac{v^{p+1/2}(r)}{p+1/2}\right)'\leq Cw'(r)\sqrt{f(w(r))} \quad\mbox{ for all }0<r<R.
$$
Integrating over $[0,r]$ we obtain
$$
v^{p+1/2}(r)-v^{p+1/2}(0)\leq C\int_{w(0)=0}^{w(r)}\sqrt{f(t)}{dt}.
$$
Since $\lim_{r\nearrow R}v(r)=\infty$, we can find $\rho \in (0,R)$ such that
$$
\Big(v^{p}(r)\Big)^{(2p+1)/2p}\leq C\int_0^{w(r)}\sqrt{f(t)}{dt} \quad\mbox{ for all }\rho\leq r<R.
$$
Using \eq{eqw3} we obtain
$$
\frac{w'(r)}{\displaystyle\Big(\int_0^{w(r)}\sqrt{f(t)}{dt}\Big)^{2p/(2p+1)}}\leq C  \quad\mbox{ for all }\rho\leq r<R.
$$
Integrating the above inequality over $[\rho,r]$ we have
$$
\int_{\rho}^{r}\frac{w'(t)}{\displaystyle\Big(\int_0^{w(t)}\sqrt{f(t)}{dt}\Big)^{2p/(2p+1)}}{dt}\leq C(r-\rho)\leq Cr \quad\mbox{ for all }\rho\leq r<R.
$$
By changing the variable and then letting $r\nearrow R$  one  obtains
\begin{equation}\label{eqww}
\int_{w(\rho)}^{\infty}\frac{ds}{\displaystyle\Big(\int_0^{s}\sqrt{f(t)}{dt}\Big)^{2p/(2p+1)}}\leq C(R-\rho)< \infty.
\end{equation}
Hence,
$$
\int_{1}^{\infty}\frac{ds}{\displaystyle\Big(\int_0^{s}\sqrt{f(t)}{dt}\Big)^{2p/(2p+1)}} < \infty .
$$
Using Lemma \ref{lemma1.1}, this is equivalent to 
$$
\int_{1}^{\infty}\frac{ds}{\displaystyle\Big(\int_0^{s}{F(t)}{dt}\Big)^{p/(2p+1)}} < \infty.
$$

We now assume that $f$ fulfills \eq{int01} and prove that \eq{sys} has a positive radial solution $(u,v)$ satisfying 
$\lim_{r\nearrow R} v(r)=\infty$.
Looking for radially symmetric solutions of \eq{sys} we are led to solve
\begin{equation}\label{sysr}
\left\{
\begin{aligned}
&u''(r)+\frac{N-1}{r}u'(r)=v^p(r),\;r>0\\
&v''(r)+\frac{N-1}{r}v'(r)=f(|u'(r)|),\;r>0\\
&u(r)>0, v(r)>0 \mbox{ for } r\geq 0.
\end{aligned}
\right.
\end{equation}
In order to obtain the local existence of a solution, it is more convenient to introduce $w=u'$. Thus, the system \eq{sysr} reads  
\begin{equation}\label{sysr1}
\left\{
\begin{aligned}
&u'(r)=w(r)\\
&w'(r)+\frac{N-1}{r}w(r)=v^p(r)\\
&v''(r)+\frac{N-1}{r}v'(r)=f(|w(r)|)\\
&w(0)=v'(0)=0, v(0)>0.
\end{aligned}
\right.
\end{equation}
By twice integration, \eq{sysr1} is equivalent to
\begin{equation}\label{sysr2}
\left\{
\begin{aligned}
&u(r)=u(0)+\int_0^r w(t)dt,\;r>0,\\
&w(r)=\int_0^r t^{N-1}v^p(t)dt,\; r>0,\\
&v(r)=v(0)+\int_0^r t^{1-N}\int_0^t s^{N-1}f(|w(s)|)dsdt, \; r>0,\\
&u(0)>0, v(0)>0.
\end{aligned}
\right.
\end{equation}
Since $f$ is a $C^1$-function, by a standard contraction mapping principle one obtains the existence of a solution $(u,v)$ of \eq{sysr} defined in a maximal interval $[0,R_{max})$. By Step 1, $w$ and $v$ satisfy 
\eq{eqw3} and \eq{eqv3}. Thus,  we have
\begin{equation}\label{eqwv}
\left\{
\begin{aligned}
&f(w(r))\leq Nv''(r)  &&\quad\mbox{ for all }0<r<R_{max}, \\
& w'(r)\leq v^{p}(r)  &&\quad\mbox{ for all }0<r<R_{max}.
\end{aligned}
\right.
\end{equation}
Multiplying the two inequalities in \eq{eqwv} and then integrating over $[0,r]$  we deduce 
$$
F(w(r))\leq Nv^p(r)v'(r) \quad\mbox{ for all }0<r<R_{max}.
$$
Multiplying the above inequality by $w'(r)$ and using \eq{eqw3} one  obtains
\begin{equation}\label{eqwv2}
 w'(r)F(w(r))\leq Nv^{2p}(r)v'(r) \quad\mbox{ for all }0<r<R_{max}.
\end{equation}
Fix $\rho\in (0,R_{max})$ and  denote 
$$
G(r):=\int_{\rho}^{r}{F(t)}{dt} \quad\mbox{ for all }\rho\leq r<R_{max}.
$$
Integrating \eq{eqwv2} over $[\rho,r]$ and using \eq{eqw3} we have
\begin{equation*}
\begin{aligned}
G(w(r))=\int_{\rho}^{w(r)}{F(t)}{dt}&\leq N\int_{\rho}^{r}{v^{2p}(t)v'(t)}{dt}\\
&\leq C[v^p(r)]^{(2p+1)/p}\\
&\leq C[w'(r)]^{(2p+1)/p} \quad\mbox{ for all }\rho\leq r<R_{max}.
\end{aligned}
\end{equation*}
Hence
$$
C\leq \frac{w'(r)}{\Big(G(w(r))\Big)^{p/(2p+1)}}\quad\mbox{ for all }\rho\leq r<R_{max}.
$$
A further integration over $[\rho,r]$ yields
$$
C(r-\rho)\leq \int_{\rho}^{r}\frac{w'(t)}{\Big(G(w(t))\Big)^{p/(2p+1)}}{dt} \rho\leq r<R_{max}.
$$
By changing the variable of integration and then letting $r \nearrow R_{max}$  we have
\begin{equation}\label{eqG}
C(R_{max}-\rho)\leq \int_{w(\rho)}^{\infty}\frac{ds}{\Big(G(s)\Big)^{p/(2p+1)}}.
\end{equation}
Therefore,
$$
R_{max}\leq C\int_{1}^{\infty}\frac{ds}{\displaystyle\Big(\int_0^{s}{F(t)}{dt}\Big)^{p/(2p+1)}}< \infty.
$$
We have obtained a positive radial solution $(u,v)$ of \eq{sys} in $B_{R_{max}}$ satisfying $\lim_{r\nearrow R_{max}}v(r)=\infty$. Now, if $R>0$ is any positive radius, we let 
$$
\tilde f(t)= \lambda^{2(1+1/p)} f\Big(\frac{t}{\lambda}\Big)\quad \mbox{ for all } t\geq 0.
$$ 
Clearly $\tilde f$ satifies \eq{int01}. By the above arguments there exists $(\tilde u, \tilde v)$  such that
$$
\left\{
\begin{aligned}
\Delta \tilde u&=\tilde v^p&&\quad\mbox{ in }B_{R_{max}},\\
\Delta \tilde v&=\tilde f(|\nabla \tilde u|) &&\quad\mbox{ in }B_{R_{max}},
\end{aligned}
\right.
$$
where $B_{R_{max}}$ is a maximum ball of existence.
Let
\begin{equation*}
\left\{
\begin{aligned}
u(x)&=\tilde u\Big(\frac{x}{\lambda}\Big) &&\quad\mbox{ in } B_R,\\
v(x)&=\lambda^{-2/p}\tilde v\Big(\frac{x}{\lambda}\Big) &&\quad\mbox{ in } B_R.
\end{aligned}
\right.
\end{equation*}
By taking $\lambda =R/R_{max}$, we deduce that $(u,v)$ satisfies \eq{sys} in $B_R$.

\bigskip

\bigskip

\bigskip

\noindent{\it Step 3: Proof of (ii) and (iii).}

Assume \eq{sys} admits a positive radial solution $(u,v)$ in $B_R$ that satisfies \eq{cond1} (resp. \eq{cond2}). By Step 2 above, $f$ must satisfy \eq{int01}. From \eq{eqg} we have
$$
\Big(\int_0^{2s}F(t){dt}\Big)^{p/(2p+1)}\geq\Big(\int_0^{s}\sqrt{f(t)}{dt}\Big)^{2p/(2p+1)} \quad\mbox{ for all }s\geq 0.
$$
 Using this fact and working in the same way as we did for estimating \eq{eqww} and \eq{eqG}, there exists $\rho\in (0,R)$ such that
\begin{equation}\label{eqint}
\int_{w(r)}^{\infty}\frac{ds}{\displaystyle\Big(\int_0^{2s}F(t){dt}\Big)^{p/(2p+1)}}\leq \int_{w(r)}^{\infty}\frac{ds}{\displaystyle\Big(\int_0^{s}\sqrt{f(t)}{dt}\Big)^{2p/(2p+1)}}\leq C_1(R-r)
\end{equation}
and
\begin{equation}\label{eqint2}
\int_{w(r)}^{\infty}\frac{ds}{\displaystyle\Big(\int_0^{s}F(t){dt}\Big)^{p/(2p+1)}}\geq  C_2(R-r)
\end{equation}
for all $\rho<r<R$, where $C_1,C_2>0$ are constants.

Let $\Gamma: (0,\infty)\to (0,\infty)$ be defined as
$$
\Gamma{(t)}=\int_{t}^{\infty}\frac{ds}{\displaystyle\Big(\int_0^{s}{F(t)}{dt}\Big)^{p/(2p+1)}}\quad\mbox{ for all }t>0.
$$
Note that $\Gamma$ is decreasing and by \eq{int01} we have $\lim_{t\to \infty}\Gamma(t)=0$.
From \eq{eqint} and \eq{eqint2} we deduce
$$
\Gamma{(2w(r))}\leq C_1(R-r) \quad\mbox{ and }\quad
\Gamma{(w(r))}\geq C_2(R-r)\quad\mbox{ for all }\rho\leq r<R.
$$
Since $\Gamma$ is decreasing, the above estimates  yield
\begin{equation}\label{equ}
 \left\{
\begin{aligned}
2w(r)&\geq \Gamma^{-1}(C_1(R-r)) &&\quad\mbox{ for all }\rho\leq r<R,\\
w(r)&\leq \Gamma^{-1}(C_2(R-r)) &&\quad\mbox{ for all }\rho\leq r<R.
\end{aligned}
\right.
\end{equation}
Let us recall that
\begin{equation}\label{equmax}
u(r)=u{(\rho)}+\int_{\rho}^{r}{w(t)}{dt} \quad\mbox{ for all }\rho\leq r<R.
\end{equation}
From \eq{equmax} and \eq{equ} we find $\lim_{r\nearrow R}u(r)=\infty$ if and only if
$$
\int_{\rho}^{R}w(t){dt}=\infty
$$
if and only if
$$
\int_{\rho}^{R}{\Gamma^{-1}(C(R-t))}{dt}=\infty,
$$
for some constant $C>0$. Hence $\lim_{r\nearrow R}u(r)=\infty$ if and only if
$$
\int_0^{C(R-\rho)}{\Gamma^{-1}(\sigma)}{d\sigma}=\infty,
$$
if and only if
$$
\int_0^{1}{\Gamma^{-1}(\sigma)}{d\sigma}=\infty.
$$
With the change of variable $t=\Gamma^{-1}(\sigma)$ we now obtain $\lim_{r\nearrow R}u(r)=\infty$ if and only if
$$
\int_{1}^{\infty}\frac{s}{\Big(\int_0^{s}F(t){dt}\Big)^{p/(2p+1)}}{ds}=\infty.
$$
This implies \eq{int2}.

We now assume that \eq{int2} holds and show that system \eq{sys} has a positive radial solution $(u,v)$ that satisfies \eq{cond2}. We proceed as in Step 2. First we obtain the (local) existence of such a solution in a ball $B_{R_{max}}$ and then, by the same scaling argument indicated at the end of Step 2 we are able to conclude the existence of the desired solution to \eq{sys} in $B_R$ that satisfies \eq{cond2}.

\section{Proof of Theorem \ref{thm2}}  
Let $v=\Delta u$. Then $(u,v)$ satisfies
\begin{equation}\label{sysr3}
\left\{
\begin{aligned}
&u''(r)+\frac{N-1}{r}u'(r)=v(r) &&\quad\mbox{ for all }0<r<R,\\
&v''(r)+\frac{N-1}{r}v'(r)=f(|u'(r)|) &&\quad\mbox{ for all }0<r<R,\\
&\lim_{r\nearrow R}u(r)=\infty. 
\end{aligned}
\right.
\end{equation}
Integrating twice in the second equation of \eq{sysr3} we find that $r\longmapsto r^{N-1}v'(r)$ is increasing on $(0,R)$, which further implies $v'$ is increasing on $(0,R)$.
Thus, there exist
$$
L:=\lim_{r\nearrow R}v(r)\in \R\cup\{\infty\}.
$$
If $L<\infty$, then $v \leq L$  for all $0<r<R$. This fact combined with a twice integration in \eq{sysr3} yields 
$$
u(r)- u(0)\leq \frac{Lr^{2}}{2N}\quad\mbox{ for all }0<r<R.
$$
This implies that $u$ is bounded over $[0,R)$ which is a contradiction.
Therefore we must have $\lim_{r \nearrow R}v(r)=\infty$. In view of this fact there exists $R_{0}>0$ such that
$$
v(r)>0   \quad\mbox{ for all } R_{0}\leq r<R.
$$
Let $u'=w$. Then, from \eq{sysr3} we 
deduce
\begin{equation}\label{sysr5}
\left\{
\begin{aligned}
&w'(r)+\frac{N-1}{r}w(r)=v(r) &&\quad\mbox{ for all }0<r<R,\\
&v''(r)+\frac{N-1}{r}v'(r)=f(|w(r)|) &&\quad\mbox{ for all }0<r<R.
\end{aligned}
\right.
\end{equation}
From the first equation of \eq{sysr5} we find 
\begin{equation}\label{eqa1}
\Big(r^{N-1}w\Big)'=r^{N-1}v\geq 0 \quad\mbox{ for all } R_{0}\leq r<R,
\end{equation}
which in particular implies that $r\longmapsto r^{N-1}w$ is increasing on $[R_0,R)$ and so, there exists 
$$
L_{0}:=\lim_{r\nearrow R}w(r)=\lim_{r\nearrow R}u'(r)\in \R\cup \{\infty\}.
$$
With a similar argument as above, if $L_0$ is finite we derive that $u$ is bounded which contradicts  $\lim_{r\nearrow R}u(r)=\infty$. Hence   $\lim_{r\nearrow R}w(r)=\infty$.
Integrating \eq{eqa1} over $[R_{0},r]$ we obtain
$$
r^{N-1}w(r)-R_{0}^{N-1}w(R_{0})=\int_{R_0}^r t^{N-1}v(t)dt\leq v(r)\int_{R_0}^r t^{N-1}dt =\frac{v(r)}{N}\Big(r^N-R_{0}^{N}\Big),
$$
for all $R_0<r<R$. This yields
\begin{equation}\label{eqa2}
w(r)\leq \frac{v(r)r}{N}+\frac{C}{r^{N-1}} \quad\mbox{ for all }R_0<r<R,
\end{equation}
where $C=C(R_{0},N)>0$ is a constant. Since $v(r)\to \infty$ as $r\nearrow R$, we may choose $R_{1}\in (R_{0},R)$ such that 
\begin{equation}\label{eqa3}
\frac{C}{R_{1}^{N-1}}\leq \frac{v(r)r}{2N(N-1)} \quad\mbox{ for all }R_1<r<R.
\end{equation}
Combining \eq{eqa2} and \eq{eqa3} we obtain
\begin{equation}\label{eqa4}
w(r)\leq \frac{2N-1}{2N(N-1)} v(r)r \quad\mbox{ for all }R_1<r<R.
\end{equation}
We now use this last estimate in the first equation of \eq{sysr5} to deduce
$$
\frac{v(r)}{2N} \leq w'(r)\leq v(r) \quad\mbox{ for all } R_1<r<R.
$$
The same approach is now applied to the second equation of \eq{sysr5} in order to deduce
$$
\frac{f(w(r))}{2N}\leq v''(r) \leq f(w(r)) \quad\mbox{ for all } R_1<r<R.
$$
From now on, we follow line by line  the proof of Theorem \ref{thm1} with $p=1$ to reach the required conclusion.

\section{Proof of Theorem \ref{thm3}}

\subsection{More properties of solutions to system \eq{eqtq1}}

Let $(u,v)$ be a positive radially symmetric solution of \eq{eqtq1} in $B_R$.

Letting $u'(r)=w(r)$ and $v'(r)= \psi(r)$ we have 
\begin{equation}\label{ds3}
\left\{
\begin{aligned}
&w'(r)+\frac{N-1}{r}w(r)= v^{p}(r) &&\quad\mbox{ for } 0<r<R, \\
&v'(r)= \psi(r) &&\quad\mbox{ for } 0<r<R, \\
&\psi'(r)+\frac{N-1}{r}\psi(r)= w^{q}(r) &&\quad\mbox{ for } 0<r<R,\\
&w(0)=0, v(0)=m>0,\psi(0)=0. 
\end{aligned}
\right.
\end{equation}

The next result is a comparison principle between sub and supersolutions of \eq{ds3} which is true in virtue of the quasimonotone character of our system.

\begin{lemma}\label{lemma1.2}
Let $(v_{1}(r),w_{1}(r),\psi_{1}(r))$ and $(v_{2}(r),w_{2}(r),\psi_{2}(r))$ be solutions of
\begin{equation}\label{com1}
 \left\{
\begin{aligned}
&w_{1}'(r)+\frac{\theta}{r}w_{1}(r)\geq v_{1}^{p}(r) &&\quad\mbox{ for } 0<r<R_{1}^{max}, \\
&v_{1}'(r)\geq \psi_{1}(r) &&\quad\mbox{ for } 0<r<R_{1}^{max}, \\
&\psi_{1}'(r)+\frac{\theta}{r}\psi_{1}(r)\geq w_{1}^{q}(r) &&\quad\mbox{ for } 0<r<R_{1}^{max},\\
&w_{1}(0)=\mu_{1}, v_{1}(0)=m_{1},\psi_{1}(0)=\nu_{1}. 
\end{aligned}
\right.
\end{equation}

\begin{equation}\label{com2}
 \left\{
\begin{aligned}
&w_{2}'(r)+\frac{\theta}{r}w_{2}(r)\leq v_{2}^{p}(r) &&\quad\mbox{ for } 0<r<R_{2}^{max}, \\
&v_{2}'(r)\leq \psi_{2}(r) &&\quad\mbox{ for } 0<r<R_{2}^{max}, \\
&\psi_{2}'(r)+\frac{\theta}{r}\psi_{2}(r)\leq w_{2}^{q}(r) &&\quad\mbox{ for } 0<r<R_{2}^{max},\\ 
&w_{2}(0)=\mu_{2}, v_{2}(0)=m_{2},\psi_{2}(0)=\nu_{2}.
\end{aligned}
\right.
\end{equation}
where $\theta \geq 0$,\;\; $\mu_{i}\geq 0$,\;\; $\nu_{i}\geq 0$,\;\;$m_{i}\geq 0$ ,\;\;$(i=1,2).$ 

If 
$$
\mu_{1}\geq \mu_{2},\;\;m_{1}\geq m_{2},\;\;\nu_{1}\geq \nu_{2},\;\; (\mu_{1},m_{1},\nu_{1})\neq (\mu_{2},m_{2},\nu_{2}).
$$
Then 
$$
w_{1}(r)>w_{2}(r),\;\; v_{1}(r)>v_{2}(r),\;\;\psi_{1}(r)>\psi_{2}(r) \quad\mbox{ for all } 0<r<min\{R_{1}^{max},R_{2}^{max}\}.
$$
\end{lemma}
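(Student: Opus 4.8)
The plan is to prove this comparison result by a standard sliding/continuity argument exploiting the quasimonotone (cooperative) structure of the system: each of the three right-hand sides $v^p$, $\psi$, $w^q$ is nondecreasing in the "other" unknowns, so an ordering of initial data at $r=0$ propagates forward. First I would reduce to the case of strict inequality at the origin by a preliminary perturbation: if $(\mu_1,m_1,\nu_1)=(\mu_2,m_2,\nu_2)$ is excluded but we only have $\geq$ in each coordinate, I would replace the supersolution $(v_1,w_1,\psi_1)$ by a slightly larger genuine supersolution (or note that at least one coordinate is strictly larger and use the differential inequalities to show all three become strictly larger for small $r>0$; near $r=0$ the equations $w'(r)=v^p(r)+o(1)$, $v'(r)=\psi(r)$, $\psi'(r)=w^q(r)+o(1)$ let one read off the sign of the first nonvanishing derivative of each difference). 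The upshot is that there is some $r_0>0$ with $w_1>w_2$, $v_1>v_2$, $\psi_1>\psi_2$ on $(0,r_0]$.

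Next I would run the continuity argument on the interval $[r_0, \min\{R_1^{max},R_2^{max}\})$. Let
$$
r^*=\sup\{\rho\in[r_0,\min\{R_1^{max},R_2^{max}\}): w_1>w_2,\ v_1>v_2,\ \psi_1>\psi_2 \text{ on } [r_0,\rho]\}.
$$
Suppose for contradiction that $r^*<\min\{R_1^{max},R_2^{max}\}$. By continuity, at $r=r^*$ we have $w_1\geq w_2$, $v_1\geq v_2$, $\psi_1\geq\psi_2$, with equality in at least one coordinate. I would then derive a contradiction in each case. For the $w$-coordinate, set $W=w_1-w_2$; subtracting the first inequalities in \eq{com1} and \eq{com2} gives $W'(r)+\frac{\theta}{r}W(r)\geq v_1^p(r)-v_2^p(r)\geq 0$ on $[r_0,r^*]$, so $(r^\theta W(r))'\geq 0$; since $r_0^\theta W(r_0)>0$ this forces $W(r^*)>0$, i.e. $w_1(r^*)>w_2(r^*)$. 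The same integrating-factor argument handles $\psi=\psi_1-\psi_2$ using $\psi_1'-\psi_2' +\frac\theta r(\psi_1-\psi_2)\geq w_1^q-w_2^q\geq 0$. For $v=v_1-v_2$ one has $v'(r)\geq \psi_1(r)-\psi_2(r)\geq 0$ with $v(r_0)>0$, hence $v(r^*)>0$. So all three differences are strictly positive at $r^*$, contradicting the definition of $r^*$ (strict positivity persists on a slightly larger interval by continuity). Therefore $r^*=\min\{R_1^{max},R_2^{max}\}$, which is the claim.

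The main obstacle — really the only delicate point — is the base step near $r=0$: making the passage from the nonstrict, non-identical initial data to strict ordering on a small initial interval, since the coefficient $\frac\theta r$ is singular there and the $\geq$ in the initial conditions is not strict. I would handle this by treating $r^{\theta}w_i$, $v_i$, $r^{\theta}\psi_i$ as the natural variables (the factor $r^\theta$ kills the singularity and turns the first and third inequalities into monotonicity statements for the integrated quantities), and by a short case analysis on which of $\mu_1-\mu_2$, $m_1-m_2$, $\nu_1-\nu_2$ is positive, propagating that strict gap to the other two coordinates through the chain $w\to v$ (via $v'\geq\psi$), $\psi\to w$ (via $w'\geq v^p$ once $v_1>v_2$), etc. Once strict ordering holds on $(0,r_0]$, the rest is the routine integrating-factor continuation above.
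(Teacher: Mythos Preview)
Your proposal is correct and follows essentially the same approach as the paper: both arguments use the integrating factor $r^\theta$ to rewrite the $w$- and $\psi$-inequalities as $(r^\theta W)'\ge 0$, $(r^\theta\Psi)'\ge 0$, establish strict ordering on a small initial interval from the cooperative coupling, and then run a continuity/sup argument to propagate the strict inequalities up to $\min\{R_1^{max},R_2^{max}\}$. Your write-up is in fact more explicit than the paper's on the continuation step (the paper simply says ``using the same arguments as above'' at that point) and on the case analysis at $r=0$ (the paper just writes ``without loss of generality $m_1>m_2$'', whereas you note the cyclic propagation $v\to w\to\psi\to v$ that handles whichever coordinate happens to be strictly larger).
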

\begin{proof}
Without loosing any generality we may assume that $m_{1}>m_{2}.$ Note that the first equation in \eq{com1} and \eq{com2} can be written as
\begin{equation}\label{com3}
\left\{
\begin{aligned}
&\Big(r^{\theta}w_{1}(r)\Big)'\geq r^{\theta}v_{1}^{p}(r) &&\quad\mbox{  for }0<r<R_{1}^{max}, \\
&\Big(r^{\theta}w_{2}(r)\Big)'\leq r^{\theta}v_{2}^{p}(r) &&\quad\mbox{  for }0<r<R_{2}^{max}.
\end{aligned}
\right.
\end{equation}
Since $v_{1}(0)=m_{1}>v_{2}(0)=m_{2}$, there exist $\rho>0$ such that
\begin{equation}\label{com4}
v_{1}(r)> v_{2}(r) \quad\mbox{ for all } 0\leq r\leq \rho.
\end{equation}
Using \eq{com3} and \eq{com4} we have
$$
\Big(r^{\theta}w_{1}(r)\Big)'\geq r^{\theta}v_{1}^{p}(r) > r^{\theta}v_{2}^{p}(r) \geq \Big(r^{\theta}w_{2}(r)\Big)'\quad\mbox{ for all } 0\leq r\leq \rho.
$$
Hence
\begin{equation}\label{com5}
 \Big(r^{\theta}w_{1}(r)\Big)'> \Big(r^{\theta}w_{2}(r)\Big)' \quad\mbox{ for all } 0\leq r\leq \rho.
\end{equation}
Integrating \eq{com5} over $[0,r]$, $r\leq \rho$, we find
\begin{equation}\label{com6}
w_{1}(r)> w_{2}(r) \quad\mbox{ for all } 0\leq r\leq \rho.
\end{equation}
Using \eq{com6} in the third equation of \eq{com1} and \eq{com2} we get 
$$
\Big(r^{\theta}\psi_{1}(r)\Big)'> \Big(r^{\theta}\psi_{2}(r)\Big)' \quad\mbox{ for all } 0\leq r\leq \rho.
$$
Integrating over $[0,r]$, $r\leq \rho$, we deduce that
$$
\psi_{1}(r)> \psi_{2}(r)  \quad\mbox{ for all } 0\leq r\leq \rho.
$$
Let us denote
\begin{equation}\label{com7}
R:=\sup\Big\{\eta> 0 : v_{1}(r)>v_{2}(r), w_{1}(r)>w_{2}(r), \psi_{1}(r)> \psi_{2}(r) \mbox{ in }(0,\eta)\Big\}.
\end{equation}
First of all note that $R>0.$ Using the same arguments as above we have 
$$
R=\min\{R_{1}^{max}, R_{1}^{max}\}.
$$
\end{proof}

\begin{lemma}\label{lemma1.3}
Let $(v_{1}(r),w_{1}(r),\psi_{1}(r))$ and $(v_{2}(r),w_{2}(r),\psi_{2}(r))$ be the solutions of
\begin{equation}\label{com8}
 \left\{
\begin{aligned}
&w_{1}'(r)+\frac{N-1}{r}w_{1}(r)=v_{1}^{p}(r) &&\quad\mbox{ for } 0<r<R_{1}^{max}, \\
&v_{1}'(r)=\psi_{1}(r) &&\quad\mbox{ for } 0<r<R_{1}^{max}, \\
&\psi_{1}'(r)+\frac{N-1}{r}\psi_{1}(r)=w_{1}^{q}(r) &&\quad\mbox{ for } 0<r<R_{1}^{max},\\
&v_{1}(R_{1}^{max})=\infty,\\
&w_{1}(0)=0, v_{1}(0)=m_{1}>0,\psi_{1}(0)=0
\end{aligned}
\right.
\end{equation}
and
\begin{equation}\label{com9}
 \left\{
\begin{aligned}
&w_{2}'(r)+\frac{N-1}{r}w_{2}(r)=v_{2}^{p}(r) &&\quad\mbox{ for } 0<r<R_{2}^{max}, \\
&v_{2}'(r)=\psi_{2}(r) &&\quad\mbox{ for } 0<r<R_{2}^{max}, \\
&\psi_{2}'(r)+\frac{N-1}{r}\psi_{2}(r)=w_{2}^{q}(r) &&\quad\mbox{ for } 0<r<R_{2}^{max},\\ 
&v_{2}(R_{2}^{max})=\infty,\\
&w_{2}(0)=0, v_{2}(0)=m_{2}>0,\psi_{2}(0)=0.
\end{aligned}
\right.
\end{equation}
If $m_{1}>m_{2}$ then $R_{1}^{max}< R_{2}^{max}$.
\end{lemma}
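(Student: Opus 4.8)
The plan is to exploit the monotone dependence of the blow-up radius on the initial datum $m=v(0)$, which is the content of the comparison principle in Lemma \ref{lemma1.2}. Since $m_1>m_2$, the two initial vectors $(0,m_1,0)$ and $(0,m_2,0)$ are ordered (with strict inequality in the middle component), so Lemma \ref{lemma1.2} applied with $\theta=N-1$ gives
$$
w_1(r)>w_2(r),\quad v_1(r)>v_2(r),\quad \psi_1(r)>\psi_2(r)\quad\text{for all }0<r<\min\{R_1^{max},R_2^{max}\}.
$$
From $v_1>v_2$ on the common interval of existence it is immediate that $R_1^{max}\le R_2^{max}$: if we had $R_2^{max}<R_1^{max}$, then on $(0,R_2^{max})$ we would get $v_2(r)<v_1(r)$ with $v_1$ bounded on that interval (since $R_2^{max}<R_1^{max}$), contradicting $v_2(R_2^{max})=\infty$. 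Hence $R_1^{max}\le R_2^{max}$.

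It remains to upgrade this to the strict inequality $R_1^{max}<R_2^{max}$. Suppose for contradiction that $R_1^{max}=R_2^{max}=:R_*$. On $(0,R_*)$ we then have $v_1(r)-v_2(r)>0$, and both components blow up at $R_*$. The idea is to show that the gap $v_1-v_2$ cannot shrink to zero fast enough: subtracting the equations, and using that $w_1>w_2\ge 0$ and $v_1>v_2\ge 0$ together with convexity (monotonicity) of $t\mapsto t^p$ and $t\mapsto t^q$, one obtains that $r^{N-1}(w_1-w_2)$ is increasing and $r^{N-1}(\psi_1-\psi_2)$ is increasing on a neighbourhood of $R_*$, so $w_1-w_2$ and $\psi_1-\psi_2$ are bounded below by positive constants near $R_*$; integrating $v_1'-v_2'=\psi_1-\psi_2$ then shows $v_1-v_2$ is bounded below by a positive constant (in fact increasing) up to $R_*$. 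Now apply the scaling invariance of the system: if $(u,v)$ solves \eqref{eqtq1} then so does a rescaled copy, and one can translate a small increase in $m$ into a genuine decrease of the maximal radius by comparing the solution started at $m_1$ with a rescaled version of the solution started at $m_2$ — this is exactly the mechanism used at the end of Step 2 in the proof of Theorem \ref{thm1}. Concretely, pick $\lambda>1$ close to $1$ and set $\tilde v_2(r)=\lambda^{-2/p}\,v_2(r/\lambda)$, $\tilde w_2(r)=\lambda^{-1}w_2(r/\lambda)$ (so $(\tilde w_2,\tilde v_2)$ solves the same system and blows up at $\lambda R_2^{max}$); for $\lambda$ near $1$ the initial value $\tilde v_2(0)=\lambda^{-2/p}m_2$ is still $<m_1$, and a strict-ordering argument as above forces $R_1^{max}<\lambda R_2^{max}$ for every such $\lambda$, while the boundedness-below of the gap forces a definite separation, yielding $R_1^{max}<R_2^{max}$.

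The main obstacle is this last step: the comparison principle alone only delivers the non-strict inequality $R_1^{max}\le R_2^{max}$, and one must rule out equality. The cleanest route is probably not the rescaling dance sketched above but rather a direct contradiction: assuming $R_1^{max}=R_2^{max}=R_*$, use the estimates of Step 1 in the proof of Theorem \ref{thm1} (which give two-sided bounds $\frac1N v^p\le w'\le v^p$ and $\frac1N f(w)\le v''\le f(w)$, here with $f(t)=t^q$) to derive, for $i=1,2$, a precise blow-up rate $(R_*-r)^{-(q+2)/(pq-1)}v_i(r)\to c$ with the \emph{same} constant $c$ (this is essentially Theorem \ref{thm3}, whose proof does not use Lemma \ref{lemma1.3}); then $v_1-v_2=o\big((R_*-r)^{-(q+2)/(pq-1)}\big)$ near $R_*$, whereas the monotonicity argument above shows $v_1-v_2$ stays bounded below by a positive constant — no contradiction yet, since the constant is much smaller than the blow-up term. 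So one instead differentiates: the refined asymptotics also pin down $w_i$ and $\psi_i$, and one shows the \emph{next-order} terms in the expansions of $v_1$ and $v_2$ would have to coincide, which by a uniqueness/ODE argument forces $(w_1,v_1,\psi_1)\equiv(w_2,v_2,\psi_2)$, contradicting $v_1(0)=m_1\neq m_2=v_2(0)$. Thus $R_1^{max}\ne R_2^{max}$, and combined with $R_1^{max}\le R_2^{max}$ we conclude $R_1^{max}<R_2^{max}$.
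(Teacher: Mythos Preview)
Your argument has several genuine gaps, and the paper's proof is both shorter and avoids all of them.

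\textbf{The rescaling is wrong.} You borrow the scaling $\tilde v_2(r)=\lambda^{-2/p}v_2(r/\lambda)$, $\tilde w_2(r)=\lambda^{-1}w_2(r/\lambda)$ from Step~2 of Theorem~\ref{thm1}, but that scaling was designed for a \emph{general} $f$ together with a simultaneous change of $f$ to $\tilde f$. Here $f(t)=t^q$ is fixed, and your rescaled pair does \emph{not} solve the same system unless $q=2(1+1/p)$. The correct scaling invariance for $f(t)=t^q$ is
\[
\tilde w(r)=\sigma^{\frac{2p+1}{pq-1}}w(\sigma r),\qquad \tilde v(r)=\sigma^{\frac{q+2}{pq-1}}v(\sigma r),\qquad \tilde\psi(r)=\sigma^{\frac{pq+q+1}{pq-1}}\psi(\sigma r),
\]
which does produce an exact solution of \eqref{ds3} for every $\sigma>0$.

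\textbf{Even with the right scaling, you rescale in the wrong direction.} Your choice $\lambda>1$ applied to $v_2$ gives only $R_1^{\max}\le \lambda R_2^{\max}$ for all $\lambda>1$, hence merely $R_1^{\max}\le R_2^{\max}$ in the limit. The paper instead rescales $v_1$ \emph{down} by a fixed $\sigma\in\big((m_2/m_1)^{(pq-1)/(q+2)},1\big)$: the rescaled solution $\tilde v$ still has $\tilde v(0)=\sigma^{(q+2)/(pq-1)}m_1>m_2$, so Lemma~\ref{lemma1.2} gives $\tilde v>v_2$ on the common interval, and since $\tilde v$ blows up at $R_1^{\max}/\sigma$ one gets $R_2^{\max}\ge R_1^{\max}/\sigma>R_1^{\max}$ in a single stroke. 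No limit is taken and no equality case needs to be excluded.

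\textbf{Your fallback argument is circular.} You write that Theorem~\ref{thm3}'s proof ``does not use Lemma~\ref{lemma1.3}'', but it does: Step~1 of that proof invokes Lemma~\ref{lemma1.3} to guarantee $\tilde R<1$. So appealing to the blow-up rate \eqref{blowv} here is not permitted. The subsequent ``next-order terms'' and ``uniqueness/ODE argument'' are not supplied and would in any case be far more delicate than what is needed.

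In short: drop the entire second half of your argument, use the correct power-law scaling above applied to $(w_1,v_1,\psi_1)$ with a fixed $\sigma<1$, and the strict inequality follows immediately from Lemma~\ref{lemma1.2}.
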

\begin{proof}
Let $\sigma> 0$ be such that 
\begin{equation}\label{com10}
 \Big(\frac{m_{2}}{m_{1}}\Big)^{\frac{pq-1}{q+2}}< \sigma< 1.
\end{equation}
Since $m_{1}> m_{2}$ and $pq-1> 0$, it is always possible to choose such a $\sigma.$ Then
$$
\tilde{w}(r)=\sigma^\frac{2p+1}{pq-1}w_{1}(\sigma r),\;\;\tilde{v}(r)=\sigma^\frac{q+2}{pq-1}v_{1}(\sigma r),\;\; \tilde{\psi}(r)=\sigma^\frac{pq+q+1}{pq-1}\psi_{1}(\sigma r)
$$
satisfies
\begin{equation}\label{com12}
\left\{
\begin{aligned}
&\tilde{w}'(r)+\frac{N-1}{r}\tilde{w}(r)=\tilde{v}^{p}(r)\\
&\tilde{v}'(r)=\tilde{\psi}(r)\\
&\tilde{\psi}'(r)+\frac{N-1}{r}\tilde{\psi}(r)=\tilde{w}^{q}(r).\\
&\tilde{w}(0)=0,\;\;\tilde{\psi}(0)=0,\;\;\tilde{v}(0)=\sigma^\frac{q+2}{pq-1}m_{1}> m_{2}.
\end{aligned}
\right.
\end{equation}

Using Lemma \ref{lemma1.2} for $(\tilde{v}(r),\tilde{w}(r),\tilde{\psi}(r))$ and $(v_{2}(r),w_{2}(r),\psi_{2}(r))$ it follows that
$$
\tilde{v}(r)> v_{2}(r),\;\; \tilde{w}(r)> w_{2}(r),\;\; \tilde{\psi}(r)> \psi_{2}(r).
$$
In particular $\tilde{v}(r)=\sigma^\frac{q+2}{pq-1}v_{1}(\sigma r)> v_{2}(r).$ Since $v_{1}(\sigma r)$ is finite as long as $\sigma r< R_{1}^{max}$, it follows that $v_{2}(r)$ is finite as long as $r< \frac{R_{1}^{max}}{\sigma}.$ Since $R_{2}^{max}$ is the maximum time of existence for $v_{2}(r)$, it follows 
$$
\frac{R_{1}^{max}}{\sigma}\leq R_{2}^{max}.
$$
Using the fact that $\sigma< 1,$ we have
$$
R_{1}^{max}\leq \sigma R_{2}^{max}< R_{2}^{max},
$$ 
which completes the proof.
\end{proof}

\subsection{Proof of Theorem \ref{thm3}} We may always assume $R=1$ because if $(u,v)$ is a solution of \eq{eqtq1} in $B_{R}$ then
\begin{equation}\label{ds1}
\left\{
\begin{aligned}
&U(x)= R^{\frac{2+2p-pq}{pq-1}}u(Rx) &&\quad\mbox{ for } x\in B_{1},\\
&V(x)= R^{\frac{2+q}{pq-1}}v(Rx)  &&\quad\mbox{ for } x\in B_{1},
\end{aligned}
\right.
\end{equation}
is a solution of \eq{eqtq1} in $B_{1}$. In the sequel we shall assume $R=1$. Thus, letting $w=u'$ and $\psi=v'$ we have that $(v,w,\psi)$ satisfies 

\begin{equation}\label{ds2}
 \left\{
\begin{aligned}
&w'(r)+\frac{N-1}{r}w(r)=v^{p}(r) &&\quad\mbox{ for } 0<r<1, \\
&v'(r)=\psi(r) &&\quad\mbox{ for } 0<r<1, \\
&\psi'(r)+\frac{N-1}{r}\psi(r)=w^{q}(r) &&\quad\mbox{ for } 0<r<1,\\
&w(0)=0, v(0)=m>0,\psi(0)=0,\\
&\lim_{r\nearrow 1}w(r)= \lim_{r\nearrow 1} v(r)=\lim_{r\nearrow 1} \psi(r)= \infty.
\end{aligned}
\right.
\end{equation}

Further, let
\begin{equation}\label{eqhet1}
a(r)=\frac{w(r)}{A}(1-r)^{\alpha},\;\;
b(r)=\frac{v(r)}{B}(1-r)^{\beta},\;\;
c(r)=\frac{\psi(r)}{C}(1-r)^{\gamma},
\end{equation}
where
\begin{equation}\label{dsa1}
\begin{aligned}
A&=\Big[\frac{(1+2p)(q+2)^{p}(q+pq+1)^{p}}{(pq-1)^{2p+1}}\Big]^{\frac{1}{pq-1}},\\
B&=\Big[\frac{(1+2p)^{q}(q+2)(q+pq+1)}{(pq-1)^{2+q}}\Big]^{\frac{1}{pq-1}},\\ 
C&=\Big[\frac{(1+2p)^{q}(q+2)^{pq}(q+pq+1)}{(pq-1)^{q+pq+1}}\Big]^{\frac{1}{pq-1}},
\end{aligned}
\end{equation}
and
\begin{equation}\label{dsb2}
\alpha=\frac{1+2p}{pq-1},\;\;\beta=\frac{q+2}{pq-1} ,\;\;\gamma=\frac{q+pq+1}{pq-1}.
\end{equation}
 From \eq{ds2} we deduce that $(a,b,c)$ satisfies
\begin{equation}\label{ds4}
 \left\{
\begin{aligned}
 &(1-r)\Big(a'(r)+\frac{N-1}{r}a(r)\Big)= \alpha(b^{p}(r)-a(r)) &&\quad\mbox{ for } 0<r<1, \\
&(1-r)b'(r)+\beta b(r)= \beta c(r) &&\quad\mbox{ for } 0<r<1, \\
&(1-r)\Big(c'(r)+\frac{N-1}{r}c(r)\Big)= \gamma(a^{q}(r)-c(r)) &&\quad\mbox{ for } 0<r<1,\\
&a(0)=0, b(0)=m/B>0, c(0)=0.
\end{aligned}
\right.
\end{equation}
We next introduce a new change of variable in the system \eq{ds4}, by letting $r=1-e^{-t}$ and $X(t)=a(r)$, $Y(t)=b(r)$ and $Z(t)=c(r)$ where $t=\ln(\frac{1}{1-r})$. Thus, \eq{ds4} yields 
\begin{equation}\label{ds5}
 \left\{
\begin{aligned}
&X_{t}+\frac{N-1}{e^{t}-1}X(t)=\alpha(Y^{p}(t)-X(t)) &&\quad\mbox{ for } 0<t<\infty, \\
&Y_{t}=\beta(Z(t)-Y(t)) &&\quad\mbox{ for } 0<t<\infty, \\
&Z_{t}+\frac{N-1}{e^{t}-1}Z(t)=\gamma(X^{q}(t)-Z(t)) &&\quad\mbox{ for } 0<t<\infty,\\
&X(0)=0, Y(0)=m/B>0, Z(0)=0.
\end{aligned}
\right.
\end{equation}

\bigskip

The proof of Theorem \ref{thm3} will be divided into three steps. 

\medskip

\noindent {\it Step 1: $\xi(t)=(X(t),Y(t),Z(t))$ is bounded as $t\rightarrow \infty$. }

\medskip
Let us assume by contradiction that $(X(t),Y(t),Z(t))$ is not bounded. Then we claim that $Y(t)$ is unbounded. If $Y(t)$ is bounded, the first equation of \eq{ds5} would imply
$$
X_{t}+\alpha X(t)\leq \alpha Y^{p}(t)\leq C \quad\mbox{ for all } t\geq 0.
$$
which is equivalent to 
$$
\Big(X(t)e^{\alpha t}\Big)'\leq Ce^{\alpha t} \quad\mbox{ for all } t\geq 0.
$$
Integrating the above inequality we easily deduce 
that $X(t)$ is bounded. Similarly, $Z(t)$ is bounded which contradicts our assumption. Therefore $Y(t)$ must be unbounded. 

\medskip

Let $\tilde{m}>m$ and $(\tilde{v},\tilde{w},\tilde{\psi})$ be the solution of \eq{ds3} with the initial conditions 
$$
\tilde{w}(0)=0,\;\; \tilde{v}(0)=\tilde{m},\;\;\tilde{\psi}(0)=0
$$
defined on the maximum interval $(0,\tilde{R}).$ By Lemma \ref{lemma1.3} we have $\tilde{R}<1$. 

Let $(\tilde{X},\tilde{Y},\tilde{Z})$ be the solution of
\begin{equation}\label{nds5}
 \left\{
\begin{aligned}
& \tilde{X}_{t}+\frac{N-1}{e^{t}-1}\tilde{X}(t)=\alpha(\tilde{Y}^{p}(t)-\tilde{X}(t)) &&\quad\mbox{ for } 0<t<\tilde{T}:=\ln\Big(\frac{1}{1-\tilde{R}}\Big), \\
& \tilde{Y}_{t}=\beta(\tilde{Z}(t)-\tilde{Y}(t)) &&\quad\mbox{ for } 0<t<\tilde T, \\
& \tilde{Z}_{t}+\frac{N-1}{e^{t}-1}\tilde{Z}(t)=\gamma(\tilde{X}^{q}(t)-\tilde{Z}(t)) &&\quad\mbox{ for } 0<t<\tilde T,\\
& \tilde{X}(0)=0,\;\; \tilde{Y}(0)=\tilde{m}/B,\;\; \tilde{Z}(0)=0,
\end{aligned}
\right.
\end{equation}
associated to $(\tilde{v},\tilde{w},\tilde{\psi}).$ Then $(\tilde{X},\tilde{Y},\tilde{Z})$ blows up at $\tilde{T}.$

Since $Y$ is unbounded, we can choose $t_{0}>0$ such that $Y(t_{0})>\tilde{Y}(0)=\tilde{m}/B.$ Let us set
$$
\hat{X}(t)=X(t+t_{0}),\;\; \hat{Y}(t)=Y(t+t_{0}),\;\; \hat{Z}(t)=Z(t+t_{0}).
$$
Then, one can easily check that $(\hat{X}, \hat{Y}, \hat{Z})$ satisfies
$$
\left\{
\begin{aligned}
&\hat X_{t}+\frac{N-1}{e^{t}-1}\hat X(t)\geq\alpha(\hat Y^{p}(t)-\hat X(t)) &&\quad\mbox{ for } 0<t<\infty, \\
&\hat Y_{t}=\beta(\hat Z(t)-\hat Y(t)) &&\quad\mbox{ for } 0<t<\infty, \\
&\hat Z_{t}+\frac{N-1}{e^{t}-1}\hat Z(t)\geq \gamma(\hat X^{q}(t)-\hat Z(t)) &&\quad\mbox{ for } 0<t<\infty,\\
&\hat X(0)>0, \hat Y(0)>\tilde m/B, \hat Z(0)=0.
\end{aligned}
\right.
$$
In virtue of Lemma \ref{lemma1.2} we deduce that
$$
\hat{X}(t)>\tilde{X}(t),\;\; \hat{Y}(t)>\tilde{Y}(t),\;\; \hat{Z}(t)> \tilde{Z}(t)
$$
which contradicts the fact that $(\tilde{X},\tilde{Y},\tilde{Z})$ blows up in finite time.  Hence $\xi(t)=(X(t),Y(t),Z(t))$ is bounded as $t\rightarrow \infty$.

\medskip

\noindent {\it Step 2:} {\it Analysis of the autonomous system associated with \eq{ds5}.} 

We shall embed the autonomous system associated to \eq{ds5} in the whole $\R^{3}$ by considering the initial value problem
\begin{equation}\label{eqna1}
\left\{
\begin{aligned}
&\zeta_{t}= g(\zeta) \quad\mbox{ for all } t\in \R,\\
&\zeta(0)=\zeta_{0}\in \R^{3},
\end{aligned}
\right.
\end{equation}
where 
$$
\zeta= \left(\begin{array}{c} X\\Y\\Z \end{array}\right) ,\;\; g(\zeta)= g\left(\begin{array}{c} X\\Y\\Z \end{array}\right)= \left(\begin{array}{c} \alpha(Y|Y|^{p-1}-X)\\\beta(Z-Y)\\\gamma(X|X|^{q-1}-Z) \end{array}\right).
$$

Using a standard comparison result we have
\begin{lemma}\label{lemmadet11}
Let $(X(t),Y(t),Z(t))$ be the solution of \eq{eqna1} and $t_{0}\in \R.$ Then
\begin{enumerate}
\item [ (i) ] If $X(t_{0}),Y(t_{0}),Z(t_{0})\leq 0$ then  $X(t),Y(t),Z(t)\leq 0$, for all $t\leq t_{0}$.
\item [ (ii) ] If $X(t_{0}),Y(t_{0}),Z(t_{0})\geq 0$ then  $X(t),Y(t),Z(t)\geq 0$, for all $t\geq t_{0}$.
\end{enumerate}
\end{lemma}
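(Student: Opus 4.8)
The plan is to deduce the invariance of the two orthants from the cooperative structure of \eq{eqna1} together with two elementary features of the field $g$: that $g(\mathbf{0})=\mathbf{0}$ (so $\zeta\equiv\mathbf{0}$ is an equilibrium) and that $g$ is odd, $g(-\zeta)=-g(\zeta)$ (so $-\zeta(\cdot)$ solves \eq{eqna1} whenever $\zeta(\cdot)$ does). What I would actually establish is that both the closed positive orthant $P=\{X,Y,Z\ge 0\}$ and the closed negative orthant $N=\{X,Y,Z\le 0\}$ are \emph{positively} invariant for the flow $\phi$, i.e. (ii) as stated and (i) in the range $t\ge t_{0}$; this positive invariance of $N$ is the form actually needed in the sequel. (It should be noted that $N$ is not negatively invariant, so (i) cannot be read for $t\le t_{0}$: from $(\varepsilon,-1,-1)$ with $\varepsilon>0$ small one has $g_{1}(\varepsilon,-1,-1)=\alpha(-1-\varepsilon)<0$, so $X$ quickly reaches $0$ and turns negative while $Y,Z$ stay near $-1$, and the trajectory crosses into the interior of $N$ although it started outside $N$.) The structural fact used is that the off-diagonal partials
\[
\frac{\partial g_{1}}{\partial Y}=\alpha p|Y|^{p-1},\qquad\frac{\partial g_{2}}{\partial Z}=\beta,\qquad\frac{\partial g_{3}}{\partial X}=\gamma q|X|^{q-1}
\]
are $\ge 0$ while $\partial g_{1}/\partial Z=\partial g_{2}/\partial X=\partial g_{3}/\partial Y=0$, so \eq{eqna1} is cooperative and hence $\phi$ is order preserving: $\zeta_{0}\le\zeta_{0}'$ componentwise implies $\phi(t,\zeta_{0})\le\phi(t,\zeta_{0}')$ for all $t\ge 0$ in the common interval of existence --- the analogue for \eq{eqna1} of the comparison principle of Lemma~\ref{lemma1.2}.

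Granting this, (ii) is immediate: if $\zeta(t_{0})\ge\mathbf{0}$, then comparing the two solutions $t\mapsto\zeta(t)$ and $t\mapsto\mathbf{0}$ of \eq{eqna1} from time $t_{0}$ on, order preservation yields $\zeta(t)\ge\mathbf{0}$ for all $t\ge t_{0}$ in the interval of existence, so $P$ is positively invariant. And (i) follows by applying (ii) to $-\zeta(\cdot)$: since $-\zeta(t_{0})\ge\mathbf{0}$ one gets $-\zeta(t)\ge\mathbf{0}$, that is $(X(t),Y(t),Z(t))\le\mathbf{0}$, for all $t\ge t_{0}$. Equivalently one reads (i) off the boundary of $N$: on the face $\{X=0,\ Y\le 0,\ Z\le 0\}$ one has $g_{1}=\alpha Y|Y|^{p-1}\le 0$, on $\{Y=0,\ X\le 0,\ Z\le 0\}$ one has $g_{2}=\beta Z\le 0$, and on $\{Z=0,\ X\le 0,\ Y\le 0\}$ one has $g_{3}=\gamma X|X|^{q-1}\le 0$, so $g$ never points out of $N$ along $\partial N$ and no trajectory can leave $N$.

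The only point requiring care is the comparison principle for \eq{eqna1} itself. The proof of Lemma~\ref{lemma1.2} transfers to \eq{eqna1} with essentially no change --- the cyclic cooperative structure ("$Y$ drives $X$, $X$ drives $Z$, $Z$ drives $Y$") is identical --- so $\phi$ is order preserving; the only new feature is that for $p>1$ or $q>1$ the maps $Y\mapsto Y|Y|^{p-1}$, $X\mapsto X|X|^{q-1}$ have vanishing derivative at the origin, i.e. $g$ is $C^{1}$ but not $C^{1,1}$ there. This is harmless: $g$ is locally Lipschitz, so \eq{eqna1} has unique solutions, and when an initial comparison is not strict one argues in the usual way, first comparing against the strict supersolution $t\mapsto\varepsilon e^{Kt}(1,1,1)$ ($K$ a Lipschitz constant of $g$ on the relevant compact box) and then letting $\varepsilon\downarrow 0$. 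I expect that carrying out this adaptation of Lemma~\ref{lemma1.2} --- the connectedness/continuation step around \eq{com3}--\eq{com7} --- is the only nontrivial part of the proof; everything else is a one-line consequence of $g(\mathbf{0})=\mathbf{0}$, the oddness of $g$, and the signs of its off-diagonal derivatives.
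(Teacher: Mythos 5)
Your proposal is correct. The paper offers no argument for this lemma beyond the phrase ``using a standard comparison result'', and your write-up supplies exactly what is meant: cooperativity of $g$ makes the flow order-preserving, and comparing $\zeta(\cdot)$ with the equilibrium $\mathbf{0}$ (together with the oddness $g(-\zeta)=-g(\zeta)$) gives positive invariance of both closed orthants; equivalently, as you observe, one checks directly that $g$ never points out of either orthant across its boundary faces.

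You have also correctly caught a misprint in part (i): the negative orthant is \emph{positively} invariant, not negatively invariant, so ``$t\le t_0$'' should read ``$t\ge t_0$''. Your counterexample $(\varepsilon,-1,-1)$ with $\varepsilon>0$ small is sound: there $g_1=\alpha(-1-\varepsilon)<0$, $g_2=0$, $g_3=\gamma(1+\varepsilon^{q})>0$, so $X$ reaches $0$ in a time of order $\varepsilon$ while $Y$ and $Z$ remain near $-1$, and since on $\{X=0,\ Y<0\}$ one still has $g_1=\alpha Y|Y|^{p-1}<0$, the orbit crosses from the complement of the negative orthant into its interior, falsifying (i) as printed. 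The forward version you prove is, moreover, the one the paper actually uses in Step~2 of the proof of Theorem~\ref{thm3}: to exclude a connecting orbit from $\mathbf{-1}$ to $\mathbf{1}$ (or from $\mathbf{1}$ to $\mathbf{-1}$) one needs the orbit, once in the negative (or positive) orthant near $t=-\infty$, to stay there for all later $t$.
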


The system \eq{eqna1} is cooperative and has negative divergence. It has exactly three equilibria, namely ${\bf0}=(0,0,0)$,\;\;${\bf1}=(1,1,1)$ and ${\bf-1}=(-1,-1,-1).$ It is easy to check that {\bf0} is asymptotically stable. The linearized matrix at {\bf1} and {\bf-1} is
$$
M=\left[\begin{array}{ccc} -\alpha&\alpha p&0 \\ 0&-\beta&\beta \\ \gamma q&0&-\gamma \end{array}\right],
$$
and the eigenvalues $\lambda_{i}$, $1\leq i\leq 3,$ are solutions of
$$
(\lambda+\alpha)(\lambda+\beta)(\lambda+\gamma)-pq\alpha \beta \gamma= 0.
$$
From the definition of $\alpha$, $\beta$ and $\gamma$ in \eq{dsb2}, we have
$$
\alpha+1= p\beta,\;\; \beta+1= \gamma,\;\; \gamma+1= q\alpha.
$$
This shows that $\lambda_{1}=1$ is an eigenvalue of $M.$ Also $\lambda_{2}+\lambda_{3}< 0$ and $\lambda_{2}\lambda_{3}=(pq-1)(\alpha \beta \gamma)>0.$
Thus, $Re(\lambda_{2})< 0$,\;\;$Re(\lambda_{3})< 0.$ So {\bf1},\;\;{\bf-1} are saddle points with two-dimensional stable manifolds. Using Lemma \ref{lemmadet11} and the fact that {\bf0} is asymptotically stable, we deduce that the system \eq{eqna1} has no circuits. By Theorems \ref{thmdet2} and \ref{thmdet3} any compact limit set of \eq{eqna1} reduces to an equilibrium point. Hence, any bounded trajectory $\phi(t,\zeta)$ converges both backward and forward in time to one of the three equilibria described above.

\medskip

\noindent {\it Step 3: Analysis of the non-autonomous system \eq{ds5}.}

\medskip

Let $\xi_{0}=(0,m/B,0)$ and denote by $\Phi(\cdot,\xi_{0})$ the semiflow associated to \eq{ds5}. By Theorem \ref{thmfd2}, the $\omega$-limit set $\omega_{\Phi}(\xi_{0})$ is invariant under the flow $\phi$ of the autonomous system \eq{eqna1}. Thus
$$
\phi(t,\omega_{\Phi}(\xi_{0}))=\omega_{\Phi}(\xi_{0}) \quad\mbox{ for all } t\geq 0.
$$
Due to the group property of the flow $\phi$, the above equality is true for all $t\in \R$ because
\begin{equation}\label{nau1}
\omega_{\Phi}(\xi_{0})= \phi(0,\omega_{\Phi}(\xi_{0}))=\phi(-|t|,\phi(|t|,\omega_{\Phi}(\xi_{0}))=\phi(-|t|,\omega_{\Phi}(\xi_{0})).
\end{equation}

Hence, $\phi(t,\omega_{\Phi}(\xi_{0}))= \omega_{\Phi}(\xi_{0})$ for all $t\in \R.$

\medskip

Let $z\in \omega_{\Phi}(\xi_{0})$. Since  $\omega_{\Phi}(\xi_{0})$ is chain recurrent, for all $n\geq 1$ there exist a finite sequence of points in  $\omega_{\Phi}(\xi_{0})$
$$
z=\zeta_1,\;\zeta_2,\;\dots,\; \zeta_{k_n},\;\zeta_{k_n+1}=z
$$
and a sequence of finite times
$$
t_1,\;t_2,\;\dots,\; t_{k_n}\geq n
$$
such that
$$
|\phi(t_i,\zeta_i)-\zeta_{i+1}|<\frac{1}{n}\quad\mbox{ for all }1\leq i\leq k_n.
$$
In particular, for $i=k_n$ we find
\begin{equation}\label{eqzeta}
|\phi(t_{k_n},\zeta_{k_n})-z|<\frac{1}{n}\quad\mbox{ for all }n\geq 1.
\end{equation}
Since $\{\zeta_{k_n}\}\subset \omega_{\Phi}(\xi_{0})$ is bounded, it follows that up to a subsequence (still denoted by $\{\zeta_{k_n}\}$) we have $\zeta_{k_n}\to \zeta_0$ as $n\to \infty$, for some $\zeta_0\in \omega_{\Phi}(\xi_{0})$. By Step 2, $\phi(t,\zeta_0)\to \ell\in \{{\bf 0,1}\}$ as $t\to \infty$.  Using the continuous dependence of the flow $\phi$ on the initial data we can let $n\to \infty$ in \eq{eqzeta} to deduce $z=\ell \in \{{\bf 0,1}\}$. Thus, $\omega_{\Phi}(\xi_{0})\subset \{\bf0,\bf1\}$.
Since $\omega_{\Phi}(\xi_{0})$ is connected, by Theorem \ref{thmfd2}(a) it follows that $\omega_{\Phi}(\xi_{0})=\{\bf0\}$ or $\omega_{\Phi}(\xi_{0})=\{\bf1\}.$ Assume by contradiction that $\omega_{\Phi}(\xi_{0})=\{\bf0\}.$ Then $(X(t),Y(t),Z(t))$ tends to {\bf0} as $t\rightarrow \infty.$ Then, we may find $t_{0}>0$ such that
$$
0<X(t_{0}), Y(t_{0}), Z(t_{0})< 1.
$$
Let now $\tilde{m}>m$ and $(\tilde{X}(t),\tilde{Y}(t),\tilde{Z}(t))$ be the solution of \eq{nds5}. By taking $\tilde{m}$ close to $m$ and using the continuous dependence on the initial data of solution to \eq{ds5}, we may assume
$$
0< \tilde{X}(t_{0}), \tilde{Y}(t_{0}), \tilde{Z}(t_{0})< 1.
$$
A comparison principle now implies 
$$
0< \tilde{X}(t), \tilde{Y}(t), \tilde{Z}(t)< 1 \quad\mbox{ for all } t\geq t_{0}.
$$
But the above inequalities contradict the fact that $(\tilde{X}(t),\tilde{Y}(t),\tilde{Z}(t))$ blows up in finite time. Hence, $\xi(t)=(X(t),Y(t),Z(t))\rightarrow {\bf1}.$

\medskip

Using \eq{eqhet1} it follows,
\begin{equation}\label{end1}
\left\{
\begin{aligned}
&v(r)(1-r)^{\beta}\rightarrow B \quad\mbox{ as } r\nearrow 1,\\
&w(r)(1-r)^{\alpha}\rightarrow A \quad\mbox{ as } r\nearrow 1,\\
&\psi(r)(1-r)^{\gamma}\rightarrow C \quad\mbox{ as } r\nearrow 1.
\end{aligned}
\right.
\end{equation}
Now, the first part of equation \eq{end1} implies \eq{blowu0}. Let $\varepsilon>0$. Then there exist $\delta \in (0,1)$ such that
\begin{equation}\label{end2}
(1-\varepsilon)A(1-r)^{-\alpha}\leq w(r)=u'(r)\leq (1+\varepsilon)A(1-r)^{-\alpha} \quad\mbox{ for all } r\in [\delta,1).
\end{equation}
Assume $q>2(1+1/p)$. Thus $\alpha \in (0,1)$. By Corollary \ref{corblowup3}, $u$ is bounded and increasing on $(0,1).$ Thus there exists $L=\lim_{r\nearrow 1}u(r)$ and integrating \eq{end2} over $[r,1]$, we find
$$
\frac{A}{1-\alpha}(1-\varepsilon)\leq \frac{L-u(r)}{(1-r)^{1-\alpha}}\leq \frac{A}{1-\alpha}(1+\varepsilon) \quad\mbox{ for all } r\in [\delta,1).
$$
This proves part (i) of Theorem \ref{thm3}.

\medskip

Assume now $q=2(1+1/p)$. Thus $\alpha=1.$ Integrating \eq{end2} over $[\delta,r]$, where $\delta<r<1$, we find
$$
A(1-\varepsilon)\leq \liminf_{r\nearrow 1}\frac{u(r)}{\ln(\frac{1}{1-r})}\leq \limsup_{r\nearrow 1}\frac{u(r)}{\ln(\frac{1}{1-r})}\leq A(1+\varepsilon).
$$
Letting $\varepsilon \rightarrow 0,$ we get 
$$
\lim_{r\nearrow 1}\frac{u(r)}{\ln(\frac{1}{1-r})}=A.
$$
This proves part (ii) of Theorem \ref{thm3}.

\medskip

Assuming $q<2(1+1/p)$, in a similar way as before we derive the proof of part (iii) in Theorem \ref{thm3}.

\section{Proof of the Theorem \ref{thmrn}}
As in the proof of  Step 1 in Theorem \ref{thm1}, we obtain that $u'$, $v'$, $u$, $v$ are increasing and
\begin{equation*}
\left\{
\begin{aligned}
&u'(r)=r^{1-N}\int_{0}^{r}{s^{N-1}v^{p}(s)}ds \quad\mbox{ for all } r>0,\\
&v'(r)=r^{1-N}\int_{0}^{r}{s^{N-1}(u')^{q}(s)}ds \quad\mbox{ for all } r>0.
\end{aligned}
\right.
\end{equation*}
This yields
\begin{equation}\label{rn1}
\frac{rv^{p}(0)}{N}\leq u'(r)\leq \frac{rv^{p}(r)}{N} \quad\mbox{ for all } r>0
\end{equation}
and 
\begin{equation}\label{rn2}
\frac{v^{pq}(0)r^{1+q}}{N^{q}(N+q)}\leq v'(r)\leq \frac{ru'^{q}(r)}{N} \quad\mbox{ for all } r>0.
\end{equation}
From \eq{rn1} and \eq{rn2} we deduce that $u'(r)$, $v'(r)$, $u(r)$, $v(r)$ tend to infinty as $r\rightarrow \infty$.

Inspired by the change of variables introduced in \cite{HV1996} (see also \cite{BVH2010,G2012}) we define 
$$
X(t)= \frac{ru'(r)}{u(r)},\;\;Y(t)= \frac{rv'(r)}{v(r)},\;\;Z(t)=\frac{rv^{p}(r)}{u'(r)} \mbox{ and }W(t)=\frac{ru'^{q}(r)}{v'(r)},
$$
where $t= \ln(r)$ for  $r\in (0,\infty)$. A direct calculation shows that $(X(t),Y(t),Z(t),W(t))$ satisfies
\begin{equation}\label{rn3}
\left\{
\begin{aligned}
&X_{t}= X(Z-(N-2)-X) \quad\mbox{ for all } t\in \R, \\
&Y_{t}= Y(W-(N-2)-Y) \quad\mbox{ for all } t\in \R, \\
&Z_{t}= Z(N+pY-Z) \quad\mbox{ for all } t\in \R, \\
&W_{t}= W(qZ-qN+q+N-W) \quad\mbox{ for all } t\in \R.
\end{aligned}
\right.
\end{equation}
By L'Hopital's rule we have $\lim_{t\rightarrow \infty}X(t)=2-N+\lim_{t\rightarrow \infty}Z(t)$. Thus, it is enough to consider the last three equations of \eq{rn3}, namely
\begin{equation}\label{rn4}
\left\{
\begin{aligned}
&Y_{t}= Y(W-(N-2)-Y) \quad\mbox{ for all } t\in \R, \\
&Z_{t}= Z(N+pY-Z) \quad\mbox{ for all } t\in \R, \\
&W_{t}= W(qZ-qN+q+N-W) \quad\mbox{ for all } t\in \R.
\end{aligned}
\right.
\end{equation}
We rewrite our system as
\begin{equation}\label{rn5}
\zeta_{t}= g(\zeta) 
\end{equation}
where 
$$
\zeta=\left(\begin{array}{c}Y(t)\\Z(t)\\W(t)\end{array}\right) \quad\mbox{ and } \quad g(\zeta)= \left(\begin{array}{c} Y(W-(N-2)-Y)\\Z(N+pY-Z)\\ W(qZ-qN+q+N-W) \end{array}\right).
$$

Since the system \eq{rn5} is cooperative, the following comparison principle holds:
\begin{lemma}\label{lrn1}
Let $\zeta(t)= \left(\begin{array}{c}Y(t)\\Z(t)\\W(t)\end{array}\right)$ and $\tilde{\zeta}(t)= \left(\begin{array}{c}\tilde{Y}(t)\\ \tilde{Z}(t)\\ \tilde{W}(t)\end{array}\right)$ be two nonnegative solutions of \eq{rn5} such that
$$
Y(t_0)\geq \tilde{Y}(t_0), \;\;\;Z(t_0)\geq \tilde{Z}(t_0),\;\;\; W(t_0)\geq \tilde{W}(t_0)
$$
for some $t_0\in \R$. 
Then
$$
Y(t)\geq \tilde{Y}(t), \;\;\;Z(t)\geq \tilde{Z}(t),\;\;\; W(t)\geq \tilde{W}(t) \quad\mbox{ for all }t\geq t_{0}.
$$
\end{lemma}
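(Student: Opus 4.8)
The plan is to obtain Lemma \ref{lrn1} from the quasimonotone (cooperative) structure of \eq{rn5}, in the same spirit as Lemma \ref{lemma1.2}. First I would record that on the positive octant $\{Y,Z,W\ge 0\}$ the vector field $g$ is cooperative, since its off-diagonal partial derivatives are
$$
\frac{\partial g_1}{\partial Z}=0,\qquad \frac{\partial g_1}{\partial W}=Y,\qquad \frac{\partial g_2}{\partial Y}=pZ,\qquad \frac{\partial g_2}{\partial W}=0,\qquad \frac{\partial g_3}{\partial Y}=0,\qquad \frac{\partial g_3}{\partial Z}=qW,
$$
all of which are $\ge 0$ there. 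Because $\zeta$ and $\tilde\zeta$ are assumed nonnegative, for every $t$ in their common interval of existence the whole segment $(1-s)\tilde\zeta(t)+s\zeta(t)$, $0\le s\le 1$, is a convex combination of nonnegative vectors, hence lies in the positive octant \emph{regardless of the sign of $\zeta(t)-\tilde\zeta(t)$}; this is the observation that makes the argument non-circular.

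Next I would set $D(t)=\zeta(t)-\tilde\zeta(t)$ and, applying the fundamental theorem of calculus to $s\mapsto g\big((1-s)\tilde\zeta(t)+s\zeta(t)\big)$, write
$$
D'(t)=g(\zeta(t))-g(\tilde\zeta(t))=A(t)D(t),\qquad A(t)=\int_0^1 Dg\big((1-s)\tilde\zeta(t)+s\zeta(t)\big)\,ds .
$$
By the previous step $A(\cdot)$ is continuous with all off-diagonal entries nonnegative, so the whole lemma reduces to the positivity property of a linear cooperative system: if $D(t_0)\ge 0$ componentwise, then $D(t)\ge 0$ componentwise for all $t\ge t_0$.

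To prove this I would run the classical first-contact argument with an $\varepsilon$-regularization. Fix $T>t_0$, pick $c>0$ making $B(t):=A(t)+cI$ entrywise nonnegative on $[t_0,T]$, and set $E(t)=e^{c(t-t_0)}D(t)$, which solves $E'=B(t)E$. Comparing $E$ with $F^\varepsilon(t)=E(t)+\varepsilon e^{K(t-t_0)}(1,1,1)$ for $K>\sup_{[t_0,T]}\max_i\sum_j B_{ij}(t)$, one checks that at any hypothetical first time $t_1\in(t_0,T]$ at which a component $F^\varepsilon_i$ vanishes the entrywise nonnegativity of $B$, together with $F^\varepsilon\ge 0$ on $[t_0,t_1]$, forces $(F^\varepsilon_i)'(t_1)>0$, contradicting that $F^\varepsilon_i$ decreased to $0$ at $t_1$. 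Hence $F^\varepsilon>0$ on $[t_0,T]$; letting $\varepsilon\downarrow 0$ and then $T\uparrow$ gives $D\ge 0$, i.e. $Y\ge\tilde Y$, $Z\ge\tilde Z$, $W\ge\tilde W$ for all $t\ge t_0$.

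The only genuinely delicate point is this last positivity property of the linear cooperative flow: a naive maximum-principle argument is not quite enough because the derivative at the first contact point is only $\ge 0$ rather than $>0$, so the $\varepsilon$-perturbation (equivalently, the entrywise nonnegativity of the fundamental matrix of $\dot x=B(t)x$ with $B\ge 0$) is what does the work. Everything else is routine bookkeeping; alternatively, Lemma \ref{lrn1} can be deduced directly by the same integrate-and-bootstrap scheme already used in the proof of Lemma \ref{lemma1.2}.
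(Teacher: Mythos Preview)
Your argument is correct and is exactly the approach the paper has in mind: the paper itself provides no proof of Lemma~\ref{lrn1} at all, simply prefacing the statement with ``Since the system \eq{rn5} is cooperative, the following comparison principle holds''. Your verification that the off-diagonal partials $\partial g_1/\partial W=Y$, $\partial g_2/\partial Y=pZ$, $\partial g_3/\partial Z=qW$ are nonnegative on the positive octant, together with the standard $\varepsilon$-perturbed first-contact argument for the linear cooperative system satisfied by $D=\zeta-\tilde\zeta$, supplies precisely the details the paper omits.
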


\bigskip

\bigskip

From \eq{rn1} and \eq{rn2} we have $Z\geq N$ and $W\geq N$. Therefore there are only two equilibria of \eq{rn4} which satisfy $Z\geq N$ and $W\geq N$, namely 
$$
\zeta_{1}= \left(\begin{array}{c}0\\N\\N+q\end{array}\right) \quad\mbox{  and }
\zeta_{2}=\left(\begin{array}{c}\frac{2+q}{1-pq}\\N+\frac{p(2+q)}{1-pq}\\N-2+\frac{2+q}{1-pq}\end{array}\right).
$$
\begin{lemma}\label{lrn2}
$\zeta_{2}$ is asymptotically stable.
\end{lemma}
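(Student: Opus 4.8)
The plan is to use Lyapunov's indirect method: linearise the system \eq{rn4} at the equilibrium $\zeta_2$ and verify, via the Routh--Hurwitz criterion, that every eigenvalue of the Jacobian has strictly negative real part.

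First I would write $\zeta_2=(Y^*,Z^*,W^*)$ with
$$
Y^*=\frac{2+q}{1-pq},\qquad Z^*=N+pY^*,\qquad W^*=N-2+Y^*,
$$
and record the equilibrium identities implied by \eq{rn4}, namely $W^*-(N-2)-Y^*=0$, $N+pY^*-Z^*=0$ and $qZ^*-qN+q+N-W^*=0$. Differentiating $g$ and using these identities to collapse the three diagonal entries, the Jacobian at $\zeta_2$ becomes the cyclic matrix
$$
Dg(\zeta_2)=\begin{pmatrix}-Y^* & 0 & Y^*\\ pZ^* & -Z^* & 0\\ 0 & qW^* & -W^*\end{pmatrix},
$$
whose characteristic polynomial is
$$
(\lambda+Y^*)(\lambda+Z^*)(\lambda+W^*)-pq\,Y^*Z^*W^*=\lambda^3+A_1\lambda^2+A_2\lambda+A_3,
$$
with $A_1=Y^*+Z^*+W^*$, $A_2=Y^*Z^*+Z^*W^*+W^*Y^*$ and $A_3=(1-pq)\,Y^*Z^*W^*$.

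Next I would check positivity of the coordinates: $Y^*>0$ since $q\ge 1$ and $pq<1$, $Z^*=N+pY^*>0$, and $W^*=N-2+Y^*>0$ because $N\ge 2$. Hence $A_1>0$, and, again using $pq<1$, $A_3>0$. Finally, writing $a=Y^*$, $b=Z^*$, $c=W^*$ and expanding,
$$
A_1A_2-A_3=a^2b+a^2c+ab^2+b^2c+bc^2+ac^2+(2+pq)\,abc>0 .
$$
By the Routh--Hurwitz criterion for cubics ($A_1>0$, $A_3>0$, $A_1A_2>A_3$) all roots of the characteristic polynomial lie in the open left half-plane, so $\zeta_2$ is locally asymptotically stable.

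The computation is routine and there is no genuine obstacle; the one point to handle carefully is the positivity of $Y^*,Z^*,W^*$, which is exactly where the standing hypothesis $pq<1$ of Theorem \ref{thmrn} is used. This is also what distinguishes $\zeta_2$ from the equilibrium $\mathbf 1$ encountered in Section 5, where the analogous quantities $\alpha,\beta,\gamma$ obeyed the resonance relations $\alpha+1=p\beta$, $\beta+1=\gamma$, $\gamma+1=q\alpha$, forcing $\lambda=1$ to be an eigenvalue and making that point a saddle. Here no such relations hold, so Routh--Hurwitz yields full asymptotic stability.
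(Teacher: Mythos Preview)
Your proof is correct and follows essentially the same route as the paper: both linearise at $\zeta_2$, obtain the same Jacobian and characteristic polynomial $\lambda^3+A_1\lambda^2+A_2\lambda+A_3$ with $A_1,A_2,A_3$ the elementary symmetric functions of $Y^*,Z^*,W^*$ (the constant term multiplied by $1-pq$), and then reduce asymptotic stability to the inequality $A_1A_2>A_3$. The only cosmetic difference is that you invoke the Routh--Hurwitz criterion directly and verify $A_1A_2-A_3=a^2b+a^2c+ab^2+b^2c+bc^2+ac^2+(2+pq)abc>0$ by expansion, whereas the paper argues case by case (three real roots versus one real and a complex pair, using $P(-A_1)<0$ to locate the real root) and bounds $A_1A_2\ge 9\,Y^*Z^*W^*$ via AM--GM; your direct expansion is in fact a bit cleaner.
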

\begin{proof}
The linearized matrix at $\zeta_2$ is
$$
M=\left[\begin{array}{ccc}-Y_2&0&Y_2\\pZ_2&-Z_2&0\\0&qW_2&-W_2\end{array}\right].
$$
The characteristic polynomial of $M$ is
$$
P(\lambda)=\det(\lambda I-M)=\lambda_{3}+\alpha\lambda_{2}+\beta\lambda+(1-pq)\gamma
$$
where 
\begin{equation*}
\begin{aligned}
\alpha&=Y_2+Z_2+W_2\\
\beta&=Y_2Z_2+Y_2W_2+Z_2W_2\\
\gamma&=Y_2Z_2W_2.
\end{aligned}
\end{equation*}
Since $\alpha$, $\beta$, $\gamma>0$ and $pq<1$, we have $P(\lambda)>0$ for all $\lambda \geq 0$. If $P$ has three real roots then they are all negative, so $\zeta_2$ is asymptotically stable in this case. It remains to consider the situation where $P$ has exactly one real root. Let $\lambda_1 \in \R$ and $\lambda_2,\lambda_3 \in \mathbb{C}\setminus \R$ be the roots of $P$. We claim that ${\rm Re}(\lambda_2)={\rm Re}(\lambda_3)<0$. We need to show $P(-\alpha)=-\beta\alpha+(1-pq)\gamma<0$, that is, $\beta\alpha>(1-pq)\gamma$. By AM-GM inequality we find
$$
\alpha \geq 3\sqrt[3]{Y_2Z_2W_2}\quad  \mbox{ and } \quad \beta \geq 3\sqrt[3]{(Y_2Z_2W_2)^{2}}
$$
which yields $\alpha\beta> (1-pq)\gamma$. Hence $\zeta_2$ is asymptotically stable.

\end{proof}

\begin{lemma}\label{lrn3}
For all $t\in \R$, we have
\begin{equation}\label{rn6}
0\leq Y(t)\leq \frac{2+q}{1-pq},
\end{equation}
\begin{equation}\label{rn7}
N\leq Z(t)\leq N+\frac{p(2+q)}{1-pq},
\end{equation}
\begin{equation}\label{rn8}
N+q\leq W(t)\leq N-2+\frac{2+q}{1-pq}.
\end{equation}
\end{lemma}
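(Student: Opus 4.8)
The inequalities \eq{rn6}--\eq{rn8} say exactly that $\zeta_1=(0,N,N+q)\leq (Y(t),Z(t),W(t))\leq \zeta_2$ componentwise for every $t\in\R$, so the plan is to prove these two vector inequalities separately. The lower bounds will be read off from the integral representations of $u'$ and $v'$ together with the monotonicity of $u'$ and $v$; the upper bounds will come from the comparison principle of Lemma \ref{lrn1}, played against the constant (equilibrium) solution $\zeta_2$, once the backward limit of the trajectory has been identified as $\zeta_1$.

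For the lower bounds, first $Y(t)=rv'(r)/v(r)\geq 0$ since $v'\geq 0$. Because $v$ is increasing, $u'(r)=r^{1-N}\int_0^r s^{N-1}v^p(s)\,ds\leq r v^p(r)/N$, hence $Z(t)=rv^p(r)/u'(r)\geq N$. To upgrade the crude bound $W(t)\geq N$ (which comes from $v'(r)\leq r u'^q(r)/N$, itself a consequence of $u'$ being increasing) to the sharp $W(t)\geq N+q$, I would note that $u'(r)/r$ is nondecreasing: writing $u'(r)/r=r^{-N}\int_0^r s^{N-1}v^p(s)\,ds$ one gets
\begin{equation*}
\Big(\tfrac{u'(r)}{r}\Big)'=r^{-N-1}\Big(r^{N}v^{p}(r)-N\int_0^r s^{N-1}v^p(s)\,ds\Big)\geq 0,
\end{equation*}
again because $v$ is increasing. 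Therefore $u'(s)\leq (s/r)u'(r)$ for $0<s\leq r$, and inserting this into $v'(r)=r^{1-N}\int_0^r s^{N-1}u'^q(s)\,ds$ yields $v'(r)\leq r u'^q(r)/(N+q)$, i.e. $W(t)=r u'^q(r)/v'(r)\geq N+q$.

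For the upper bounds, the key preliminary step is the identity
\begin{equation*}
\lim_{t\to-\infty}(Y(t),Z(t),W(t))=(0,N,N+q)=\zeta_1,
\end{equation*}
which I would obtain from \eq{rn1}, \eq{rn2} and the fact that $v(r)\to v(0)>0$ as $r\to 0^{+}$: for instance $N\leq Z(r)\leq N\big(v(r)/v(0)\big)^{p}\to N$, while $Y(r)\leq r^{2+q}v^{pq}(r)/\big(N^{1+q}v(r)\big)\to 0$ and $N+q\leq W(r)\leq (N+q)\big(v(r)/v(0)\big)^{pq}\to N+q$. Since $0<pq<1$, one has $\zeta_1<\zeta_2$ strictly in each coordinate; the only inequality needing a word is the third, $N+q<N-2+\tfrac{2+q}{1-pq}$, which after dividing by $2+q>0$ reduces to $1<\tfrac{1}{1-pq}$. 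Hence there is $t_0\in\R$ with $(Y(t_0),Z(t_0),W(t_0))<\zeta_2$. Both this trajectory and the constant function equal to $\zeta_2$ are nonnegative solutions of \eq{rn5}, so Lemma \ref{lrn1} gives $(Y(t),Z(t),W(t))\leq \zeta_2$ for all $t\geq t_0$; since the set of admissible $t_0$ is unbounded below, this holds for every $t\in\R$, which together with the lower bounds is precisely \eq{rn6}--\eq{rn8}.

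I expect the backward-limit computation $\lim_{t\to-\infty}(Y,Z,W)=\zeta_1$ to be the main obstacle, since it needs the precise $r\to 0^{+}$ asymptotics of $u,v,u',v'$ supplied by \eq{rn1}--\eq{rn2}, not just their monotonicity; once that is in place the comparison principle does all the remaining work. The only other point deserving care is the monotonicity of $u'(r)/r$, which is exactly what separates the sharp lower bound $W\geq N+q$ from the easy bound $W\geq N$.
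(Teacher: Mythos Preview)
Your argument is correct and follows the same overall architecture as the paper---lower bounds first, then backward limit at $-\infty$, then comparison with the constant equilibrium $\zeta_2$ for the upper bounds---but the execution differs in two places worth noting.

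For the lower bounds, the paper simply writes ``In the same way we obtain the lower bound inequalities'', meaning it reruns the comparison argument against the constant solution $\zeta_1$. Your direct proof of $W\ge N+q$ via the monotonicity of $u'(r)/r$ is a genuinely different and self-contained argument: it avoids invoking Lemma~\ref{lrn1} a second time and gives the sharp bound $N+q$ in one line. For the backward limit, the paper computes $\lim_{t\to-\infty}Y=0$ and $\lim_{t\to-\infty}Z=N$ (the latter via L'H\^opital) but does \emph{not} prove $\lim_{t\to-\infty}W=N+q$; instead it argues by contradiction that $W$ cannot stay above $W_2$ near $-\infty$, producing only a sequence $t_j\to-\infty$ with $W(t_j)\le W_2$, which is all that the comparison step needs. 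Your sandwich $N+q\le W(r)\le (N+q)\big(v(r)/v(0)\big)^{pq}$, read off directly from \eq{rn1}--\eq{rn2}, is both simpler and stronger, since it gives the full limit $\zeta_1$ rather than a subsequence. Either route feeds the same final step (Lemma~\ref{lrn1} against the constant $\zeta_2$), so the conclusions coincide.
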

\begin{proof}
Since $v'(0)=0$ and $v(0)=0$ we obtain $\lim_{t\rightarrow -\infty}Y(t)= \lim_{r\rightarrow 0}\frac{rv'(r)}{v(r)}=0$. Also
\begin{equation}\label{crn1}
u''(r)+\frac{N-1}{r}u'(r)=v^{p}(r) \quad\mbox{ for all } r>0.
\end{equation}
Using L'Hopital's rule, we deduce that
$$
\lim_{r\rightarrow 0}\frac{u'(r)}{r}=\lim_{r\rightarrow 0}u''(r)= u''(0)
$$
which combined with \eq{crn1} yields
$$
u''(0)= \frac{v^{p}(0)}{N}=\lim_{r\rightarrow 0}\frac{u'(r)}{r}.
$$
Therefore
\begin{equation*}
\lim_{t\rightarrow -\infty}Z(t)=\lim_{r\rightarrow 0}\frac{rv^{p}(r)}{u'(r)}=N.
\end{equation*}
We claim that there exists $t_{j}\rightarrow -\infty $ such that
\begin{equation}\label{rn9}
\left\{
\begin{aligned}
&Y(t_{j})\leq Y_2,\\
&Z(t_j)\leq Z_2,\\
&W(t_j)\leq W_2.
\end{aligned}
\right.
\end{equation}
Because $\lim_{t\rightarrow -\infty}Y(t)=0$ and $\lim_{t\rightarrow -\infty}Z(t)= N$, it remains only to prove the last part of \eq{rn9}. Assume by contradiction that this is not true. Thus $W> W_2$ in $(-\infty,t_0)$ for some $t_0\in \R$. Then, by taking $t_0$ small enough we have
$$
W_t= W(qZ-qN+q+N-W)< 0 \quad\mbox{ in }(-\infty,t_0).
$$
Hence, $W$ is decreasing in the neighbourhood of $-\infty$ and so, there exists $\ell= \lim_{t\rightarrow -\infty}W(t)$. Again using L'Hopital's rule we have
\begin{equation*}
\begin{aligned}
\ell=\lim_{t\rightarrow -\infty}W(t)&=\lim_{r\rightarrow 0}\frac{ru'^{q}(r)}{v'(r)}\\
&=\lim_{r\rightarrow 0}\frac{rqu'^{q-1}(r)u''(r)+u'^{q}(r)}{v''(r)}\\
&=\lim_{r\rightarrow 0}\frac{rqu'^{q-1}(r)v^{p}(r)+(1+q-qN)u'^{q}(r)}{u'^{q}(r)-\frac{N-1}{r}v'(r)}\\
&=\lim_{t\rightarrow -\infty}\frac{qZ(t)+1+q-qN}{1-\frac{N-1}{W(t)}}\\
&=\frac{1+q}{1-\frac{N-1}{\ell}}.
\end{aligned}
\end{equation*}
This yields $\ell=N+q<W_2$ which contradicts our assumption that $W>W_{2}$ in a neighbourhood of $-\infty$. This proves the last part of \eq{rn9}. We then apply the Comparison Lemma \ref{lrn1} on all the intervals $[t_j,\infty)$ for $j\geq 1$ to obtain the upper bound inequalities in Lemma \ref{lrn3}. In the same way we obtain the lower bound inequalities.
\end{proof}

\medskip

Let $K=\overline{[[\zeta_{1},\zeta_{2}]]}\subset \R^{3}$. By Lemma \ref{lrn3} we have $\omega(\zeta)\subseteq K $. Since $\zeta_2$ is asymptotically stable, $K$ has no circuits. Also, by \eq{div} we have
 $$
{\rm div}\, g(\zeta)=-W+(q-2)Z+(p-2)Y+N+2-qN+q<0\quad\mbox{ in } K.
$$ 
Using Theorems \ref{thmdet2} and \ref{thmdet3} we deduce that $\omega(\zeta)$ reduces to one of the equilibria $\zeta_1$ or $\zeta_2$. If $\zeta(t)\rightarrow \zeta_1$ as $t\rightarrow \infty$ this implies in particular that $Y(t)\rightarrow 0$ as $t\rightarrow \infty$. On the other hand, from the second equation of \eq{rn3} we deduce $Y_t> 0$ in a neighbourhood of infinity which is impossible given that $Y(t)> 0$ in $\R$. Hence $\zeta(t)\rightarrow \zeta_2$ as $t\rightarrow \infty$, that is 
\begin{equation*}
\begin{aligned}
\lim_{t\rightarrow \infty}X(t)&= 2+\frac{p(2+q)}{1-pq},\\
\lim_{t\rightarrow \infty}Y(t)&= \frac{2+q}{1-pq},\\
\lim_{t\rightarrow \infty}Z(t)&= N+\frac{p(2+q)}{1-pq},\\
\lim_{t\rightarrow \infty}W(t)&= N-2+\frac{2+q}{1-pq}.
\end{aligned}
\end{equation*}
Using $(X(t),Y(t),Z(t),W(t))$ , we have 
\begin{equation*}
\begin{aligned}
\lim_{r\rightarrow \infty}\frac{v(r)}{r^{\frac{q+2}{1-pq}}}&=\lim_{t\rightarrow \infty}(Y(t)Z^{q}(t)W(t))^{\frac{1}{pq-1}}\\
&=(Y_{2}Z^{q}_{2}W_{2})^{\frac{1}{pq-1}}\\
&=\Big[\frac{(2+q)(N(1-pq)+p(2+q))^{q}(N(1-pq)+q(2p+1))}{(1-pq)^{2+q}}\Big]^{\frac{1}{pq-1}}.
\end{aligned}
\end{equation*}
And
\begin{equation*}
\begin{aligned}
\lim_{r\rightarrow \infty}\frac{u(r)}{r^{\frac{2+2p-pq}{1-pq}}}&=\lim_{t\rightarrow \infty}\frac{(Y(t)Z^{q}(t)W(t))^{\frac{p}{pq-1}}}{X(t)Z(t)}\\
&=\frac{(Y_{2}Z^{q}_{2}W_{2})^{\frac{p}{pq-1}}}{X_{2}Z_{2}}\\
&=\Big[\frac{(2+q)(N(1-pq)+p(2+q))^{1/p}(N(1-pq)+q(2p+1))}{(1-pq)^{\frac{2p+2-pq}{p}}(2+2p-pq)^{\frac{pq-1}{p}}}\Big]^{\frac{p}{pq-1}}.
\end{aligned}
\end{equation*}

\noindent{\bf Acknowledgement.} This work is part of the author's PhD thesis and has been carried out with the financial support of the Research Demonstratorship Scheme offered by the School of Mathematical Sciences, University College Dublin.


\begin{thebibliography}{99}

\bibitem{AGMQ2012}  S. Alarc\'on, J. Garc\' ia-Meli\' an, and A. Quaas,  Keller-Osserman type conditions for some elliptic problems with gradient terms, {\it  J. Differential Equations} {\bf 252} (2012), 886--914.

\bibitem{BVH2010} M.F. Bidaut-V\'eron and H.  Giacomoni, A new dynamical approach of Emden-Fowler equations and systems, {\it Adv. Differential Equations} {\bf 15} (2010), 1033--1082.


\bibitem{B1916} L. Bieberbach, $Δu=e^u$ und die automorphen Funktionen, {\it Math. Ann.} {\bf 77} (1916), 173--212.


\bibitem{CPW2013} Y.  Chen, P.Y.H. Pang, and M. Wang, Blow-up rates and uniqueness of large solutions for elliptic equations with nonlinear gradient term and singular or degenerate weights, {\it Manuscripta Math.} {\bf 141} (2013), 171–-193.



\bibitem{C1972} C. Conley, The gradient structure of a flow, Part I, {\it Ergodic Theory Dynamical Systems}, {\bf 8} (1988), 11--26.
\bibitem{C1978} C. Conley, Isolated invariant sets and the Morse index,  CBMS Regional Conference Series in Mathematics, 38 American Mathematical Society, Providence, R.I., 1978. iii+89 pp.

\bibitem{DLS2005} J.I. D\' iaz, M. Lazzo, and  P.G. Schmidt, Large solutions for a system of elliptic equations arising from fluid dynamics, {\it SIAM J. Math. Anal.} {\bf 37} (2005), 490--513


\bibitem{DLS2014} J.I. D\' iaz, M. Lazzo, and  P.G. Schmidt, Asymptotic behavior of large radial solutions of a polyharmonic equation with superlinear growth, {\it J. Differential Equations} {\bf 257} (2014),  
4249--4276.

\bibitem{DRS2007} J.I. D\' iaz,  J.M.  Rakotoson, and P.G. Schmidt, A parabolic system involving a quadratic gradient term related to the Boussinesq approximation, {\it RACSAM. Rev. R. Acad. Cienc. Exactas Fis. Nat. Ser. A Mat.} {\bf 101}  (2007), 113--118.

\bibitem{DRS2008}  J.I. D\' iaz,  J.M.  Rakotoson, and P.G. Schmidt, Local strong solutions of a parabolic system related to the Boussinesq approximation for buoyancy-driven flow with viscous heating, {\it Adv. Differential Equations} {\bf 13} (2008), 977--1000.


\bibitem{FQS2013} P. Felmer, A. Quaas, and B. Sirkov, Solvability of nonlinear elliptic equations with gradient terms, {\it  J. Differential Equations} {\bf 254} (2013), 4327--4346.

\bibitem{GNR2002} M. Ghergu, C. Niculescu, and V. R\u adulescu, 
Explosive solutions of elliptic equations with absorption and non-linear gradient term, {\it Proc. Indian Acad. Sci. Math. Sci.} {\bf  112} (2002), no. 3, 441--451.

\bibitem{GRbook2008} M. Ghergu and V.D. R\u adulescu, Singular elliptic problems: bifurcation and asymptotic analysis, Oxford Lecture Series in Mathematics and its Applications, 37, 2008.

\bibitem{GRbook2012} M. Ghergu and V.D. R\u adulescu, Nonlinear PDEs. Mathematical models in biology, chemistry and population genetics, Springer Monographs in Mathematics. Springer, Heidelberg, 2012. 

\bibitem{G2012} I. Guerra, A note on nonlinear biharmonic equations with negative exponents, {\it J. Differential Equations} {\bf 253} (2012), 3147--3157.

\bibitem{Hirsch1989} M.W.  Hirsch, Systems of Differential Equations that are Competitive or Cooperative.  V: Convergence in 3-Dimensional Systems, 
{\it J. Differential Equations} {\bf80} (1989), 94--106.

\bibitem{Hirsch1990} M.W.  Hirsch, Systems of Differential Equations that are Competitive or Cooperative.
 IV: Structural Stability in Three-Dimensional Systems, {\it SIAM J.Math. Anal.} {\bf21} (1990), 1225--1234.


\bibitem{HV1996} J. Hulshof and R.C.A.M. van der Vorst, Asymptotic behaviour of ground states, {\it Proc. Amer. Math. Soc.} {\bf 124} (1996), 2423--2431.


\bibitem{K1954} J.B. Keller, On solutions of $\Delta u=f(u)$, {\it Comm. Pure Appl. Math.} {\bf10} (1957), 503--510.

\bibitem{LS2009a} M. Lazzo and P.G.  Schmidt,  Radial solutions of a polyharmonic equation with power nonlinearity, {\it Nonlinear Anal.} {\bf  71} (2009), 1996–2003.

\bibitem{LS2009b} M. Lazzo and P.G.  Schmidt, Oscillatory radial solutions for subcritical biharmonic equations, {\it J. Differential Equations} {\bf 247} (2009),  1479--1504.

\bibitem{LS2011} M. Lazzo and P.G.  Schmidt,  Periodic solutions and invariant manifolds for an even-order differential equation with power nonlinearity, {\it J. Dynam. Differential Equations} {\bf 23} (2011), 141–166.

\bibitem{MMMR2011} M. Magliaro, L. Mari, P. Mastrolia, and M. Rigoli,  Keller-Osserman type conditions for differential inequalities with gradient terms on the Heisenberg group, {\it J. Differential Equations} {\bf 250} (2011), 2643--2670.

\bibitem{MST1995} K. Mischaikov, H. Smith, and H.R. Thieme, Asymptotically autonomous semiflows: chain recurrence and Lyapunov functions, {\it Transactions AMS}, {\bf347} (1995), 1669--1685.

\bibitem{O1957} R. Osserman, On the inequality $\Delta u\geq f(u)$, {\it Pacific J. Math.} {\bf7} (1957), 1641--1647.

\bibitem{PS1993} P. Pucci and J. Serrin, Precise damping conditions for global asymptotic stability for nonlinear second order systems, {\it Acta Math.} {\bf170} (1993), 275--307. 

\bibitem{PS2004} P. Pucci and J. Serrin, The strong maximum principle revisited, {\it J. Differential Equations} {\bf196} (2004),  1--66.

\bibitem{PSZ1999} P. Pucci, J. Serrin,  and H. Zou, A strong maximum principle and a compact support principle for singular elliptic inequalities, {\it J. Math. Pures Appl.} {\bf78} (1999), 769--789.

\bibitem{Rbk} V. R\u adulescu, Singular phenomena in nonlinear elliptic problems. From blow-up boundary solutions to equations with singular nonlinearities, in Handbook of Differential Equations: Stationary Partial Differential Equations, Vol. 4 (Michel Chipot, Editor), North-Holland Elsevier Science, Amsterdam, 2007, pp. 483-591

\bibitem{V1984} J.L. V\'azquez, A strong maximum principle for some quasilinear equations, {\it Appl. Math. Optimization} {\bf12} (1984), 191--202.
\end{thebibliography}
\end{document}